\tikzset{individu/.style={draw,thick}}
\numberwithin{equation}{section}
\theoremstyle{plain}
\newtheorem{theorem}{Theorem}[section]
\newtheorem{lemma}[theorem]{Lemma}
\newtheorem{proposition}[theorem]{Proposition}
\theoremstyle{definition}
\theoremstyle{remark}
\newtheorem{remark}[theorem]{Remark}
\newcommand{\N}{\mathbb{N}}
\newcommand{\Z}{\mathbb{Z}}
\newcommand{\R}{\mathbb{R}}
\newcommand{\calL}{\mathcal{L}}
\newcommand{\calA}{\mathcal{A}}
\newcommand{\calN}{\mathcal{N}}
\newcommand{\calM}{\mathcal{M}}
\newcommand{\ind}[1]{\mathbf{1}_{\left\{#1\right\}}}
\newcommand{\indset}[1]{\mathbf{1}_{#1}}
\newcommand{\floor}[1]{{\left\lfloor #1 \right\rfloor}}
\newcommand{\ceil}[1]{{\left\lceil #1 \right\rceil}}
\DeclareMathOperator{\E}{\mathbf{E}}
\renewcommand{\P}{\mathbf{P}}
\renewcommand{\bar}[1]{\overline{#1}}
\renewcommand{\tilde}[1]{\widetilde{#1}}
\renewcommand{\epsilon}{\varepsilon}
\renewcommand{\phi}{\varphi}
\newcommand{\Addresses}{{
  \bigskip
  \footnotesize

  Bastien Mallein, \textsc{LAGA - Institut Galil\'ee, 99 avenue Jean-Baptiste Cl\'ement
93430 Villetaneuse, France}\par\nopagebreak
  \textit{E-mail address}: \texttt{mallein@math.univ-paris13.fr}
  
  \medskip

  Sanjay Ramassamy, \textsc{Mathematics Department, Brown University, Box 1917, 151 Thayer street, Providence, RI 02912, USA}\par\nopagebreak
  \textit{E-mail address}: \texttt{sanjay.ramassamy@ipht.fr}

}}
\title{Barak-Erd\H{o}s graphs and the infinite-bin model}
\author{Bastien Mallein \and Sanjay Ramassamy}
\date{\today}
\newcommand{\egaldistr}{\overset{(d)}{=}}
\renewcommand{\hat}[1]{\widehat{#1}}
\begin{document}

\maketitle

\begin{abstract}
A Barak-Erd\H{o}s graph is a directed acyclic version of the Erd\H{o}s-R\'enyi random graph. It is obtained by performing independent bond percolation with parameter $p$ on the complete graph with vertices $\{1,...,n\}$, in which the edge between two vertices $i<j$ is directed from $i$ to $j$. The length of the longest path in this graph grows linearly with the number of vertices, at rate $C(p)$. In this article, we use a coupling between Barak-Erd\H{o}s graphs and infinite-bin models to provide explicit estimates on $C(p)$. More precisely, we prove that the front of an infinite-bin model grows at linear speed, and that this speed can be obtained as the sum of a series. Using these results, we prove the analyticity of $C$ for $p >1/2$, and compute its power series expansion. We also obtain the first two terms of the asymptotic expansion of $C$ as $p \to 0$, using a coupling with branching random walks.
\end{abstract}

\section{Introduction}
\label{sec:introduction}

Random graphs and interacting particle systems have been two active fields of research in probability in the past decades. In 2003, Foss and Konstantopoulos~\cite{FK} introduced a new interacting particle system called the infinite-bin model and established a correspondence between a certain class of infinite-bin models and Barak-Erd\H{o}s random graphs, which are a directed acyclic version of Erd\H{o}s-R\'enyi graphs.

In this article, we study the speed at which the front of an infinite-bin model drifts to infinity. These results are applied to obtain a fine asymptotic of the length of the longest path in a Barak-Erd\H{o}s graph. In the remainder of the introduction, we first describe Barak-Erd\H{o}s graphs, then infinite-bin models. We then state our main results on infinite-bin models, and their consequences for Barak-Erd\H{o}s graphs.

\subsection{Barak-Erd\texorpdfstring{\H{o}}{o}s graphs}

Barak and Erd\H{o}s introduced in~\cite{BE} the following model of a random directed graph with vertex set $\{1,\ldots,n\}$ (which we refer to as Barak-Erd\H{o}s graphs from now on) : for each pair of vertices $i<j$, add an edge directed from $i$ to $j$ with probability $p$, independently for each pair. They were interested in the maximal size of strongly independent sets in such graphs.

However, one of the most widely studied properties of Barak-Erd\H{o}s graphs has been the length of its longest path. It has applications to mathematical ecology (food chains)~\cite{CN,NC}, performance evaluation of computer systems (speed of parallel processes)~\cite{GNPT,IN} and queuing theory (stability of queues)~\cite{FK}.

Newman~\cite{N} studied the length of the longest path in Barak-Erd\H{o}s graphs in several settings, when the edge probability $p$ is constant (dense case), but also when it is of the form $c_n/n$ with $c_n=o(n)$ (sparse case). In the dense case, he proved that when $n$ gets large, the length of the longest path $L_n(p)$ grows linearly with $n$ in the first-order approximation :
\begin{equation}
  \label{eq:defC}
\lim_{n\rightarrow \infty}{\frac{L_n(p)}{n}}=C(p) \text{ a.s.},
\end{equation}
where the linear growth rate $C$ is a function of $p$. We plot in Figure~\ref{fig:cpgraph} an approximation of $C(p)$.

\begin{figure}[htbp]
\centering
\includegraphics[height=2in]{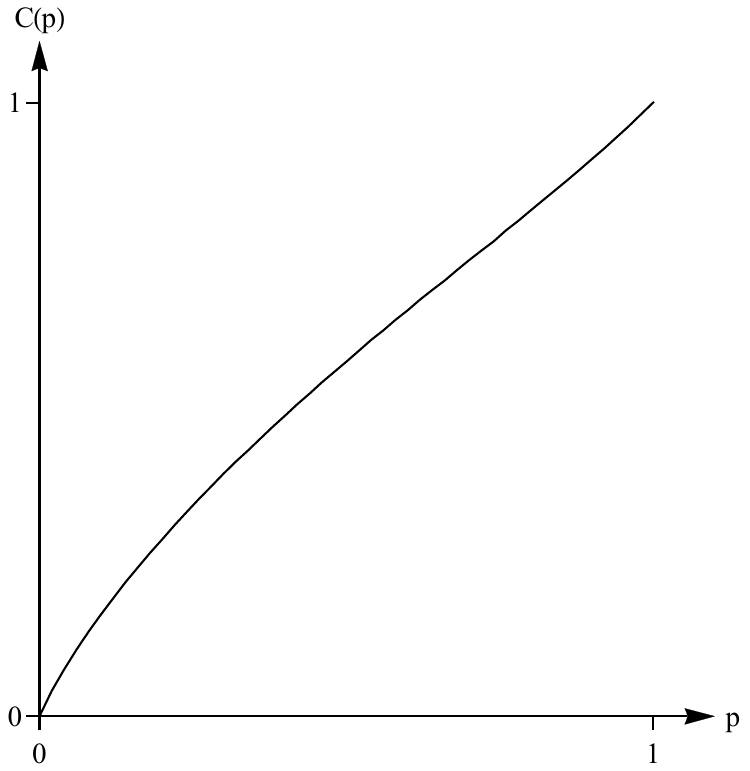}
\caption{Plot of a an approximation of $C(p)$, using $600,000$ iterations of an infinite-bin model, for values of $p$ that are integer multiples of $0.02$.}
\label{fig:cpgraph}
\end{figure}

Newman proved that the function $C$ is continuous and computed its derivative at $p=0$. Foss and Konstantopoulos~\cite{FK} studied Barak-Erd\H{o}s graphs under the name of ``stochastic ordered graphs'' and provided upper and lower bounds for $C$, obtaining in particular that
\begin{equation}
  \label{eqn:taylorFK}
C(1-q)=1-q+q^2-3q^3+7q^4+O(q^5) \text{ when } q\rightarrow0,
\end{equation}
where $q=1-p$ denotes the probability of the absence of an edge.

Denisov, Foss and Konstantopoulos~\cite{DFK} introduced the more general model of a directed slab graph and proved a law of large numbers and a central limit theorem for the length of its longest path. Konstantopoulos and Trinajsti\'c~\cite{KT} looked at a directed random graph with vertices in $\mathbb{Z}^2$ (instead of $\mathbb{Z}$ for the infinite version of Barak-Erd\H{o}s graphs) and identified fluctuations following the Tracy-Widom distribution. Foss, Martin and Schmidt~\cite{FMS} added to the original Barak-Erd\H{o}s model random edge lengths, in which case the problem of the longest path can be reformulated as a last-passage percolation question. Gelenbe, Nelson, Philips and Tantawi~\cite{GNPT} studied a similar problem, but with random weights on the vertices rather than on the edges.

Ajtai, Koml\'os and Szemer\'edi~\cite{AKS} studied the asymptotic behaviour of the longest path in sparse Erd\H{o}s-R\'enyi graphs, which are the undirected version of Barak-Erd\H{o}s graphs.

\subsection{The infinite-bin model}

Foss and Konstantopoulos introduced the infinite-bin model in~\cite{FK} as an interacting particle system which, for a right choice of parameters, gives information about the growth rate $C(p)$ of the longest path in Barak-Erd\H{o}s graphs. Consider a set of bins indexed by the set of integers $\mathbb{Z}$. Each bin may contain any number of balls, finite or infinite. A configuration of balls in bins is called \emph{admissible} if there exists $m \in \Z$ such that:
\begin{enumerate}
\item every bin with an index smaller or equal to $m$ is non-empty ;
\item every bin with an index strictly larger than $m$ is empty.
\end{enumerate} 
The largest index of a non-empty bin $m$ is called the position of the \emph{front}. From now on, all configurations will implicitly be assumed to be admissible. Given an integer $k\geq 1$, we define the \emph{move of type $k$} as a map $\Phi_k$ from the set of configurations to itself. Given an initial configuration $X$, $\Phi_k(X)$ is obtained by adding one ball to the bin of index $b_k+1$, where $b_k$ is the index of the bin containing the $k$-th ball of $X$ (the balls are counted from right to left, starting from the rightmost nonempty bin).

\begin{figure}[htbp]
\centering
\subfloat[A configuration $X$, the numbers inside the balls indicate how they are counted from right to left.]{\includegraphics[height=1.2in]{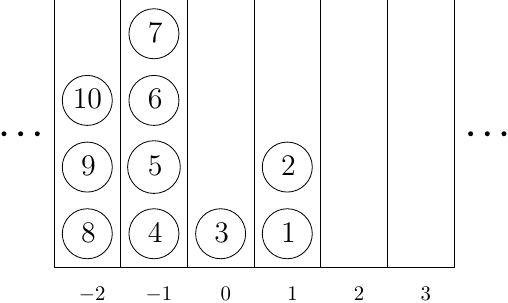}}

\hspace{\stretch{1}}
\subfloat[The configuration $\Phi_5(X)$.]{\includegraphics[height=1.2in]{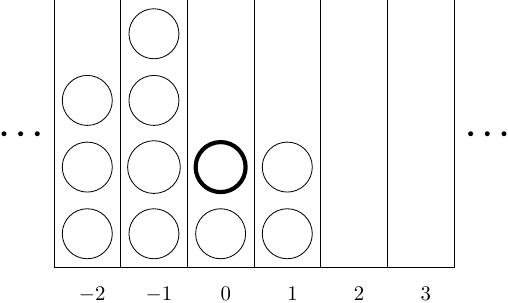}}
\hspace{\stretch{1}}
\subfloat[The configuration $\Phi_2(X)$.]{\includegraphics[height=1.2in]{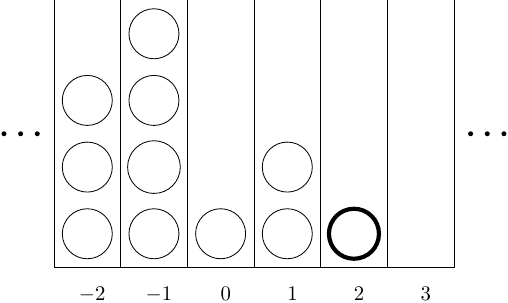}}\hspace{\stretch{1}}
\caption{Action of two moves on a configuration.}
\label{fig:move}
\end{figure}

Given a probability distribution $\mu$ on the set of positive integers and an initial configuration $X_0$, one defines the Markovian evolution of the infinite-bin model with distribution $\mu$ (or IBM($\mu$) for short) as the following stochastic recursive sequence:
\[
X_{n+1}=\Phi_{\xi_{n+1}}(X_n) \text{ for } n\geq0,
\]
where $(\xi_n)_{n\geq1}$ is an i.i.d. sequence of law $\mu$. We prove in Theorem~\ref{thm:existsSpeed} that the front moves to the right at a speed which tends a.s. to a constant limit~$v_{\mu}$. We call $v_{\mu}$ the \emph{speed} of the IBM($\mu$). Note that the model defined in~\cite{FK} was slightly more general, allowing $(\xi_n)_{n\geq1}$ to be a stationary-ergodic sequence. We also do not adopt their convention of shifting the indexing of the bins which forces the front to always be at position $0$.

Foss and Konstantopoulos~\cite{FK} proved that if $\mu_p$ is the geometric distribution of parameter $p$ then $v_{\mu_p}=C(p)$, where $C(p)$ is the growth rate of the length of the longest path in Barak-Erd\H{o}s graphs with edge probability $p$. They also proved, for distributions $\mu$ with finite mean verifying $\mu(\{1\})>0$, the existence of renovations events, which yields a functional law of large numbers and a central limit theorem for the IBM($\mu$). Based on a coupling result for the infinite-bin model obtained by Chernysh and Ramassamy~\cite{CR}, Foss and Zachary~\cite{FZ} managed to remove the condition $\mu(\{1\})>0$ required by~\cite{FK} to obtain renovation events.

Aldous and Pitman~\cite{AP} had already studied a special case of the infinite-bin model, namely what happens to the speed of the front when $\mu$ is the uniform distribution on $\{1,\ldots,n\}$, in the limit when $n$ goes to infinity. They were motivated by an application to the running time of local improvement algorithms defined by Tovey~\cite{To}.

\subsection{Speed of infinite-bin models}

The remainder of the introduction is devoted to the presentation of the main results proved in this paper. In this subsection we state the results related to general infinite-bin models, and in the next one we state the results related to the Barak-Erd\H{o}s graphs.

We first prove that in every infinite-bin model, the front moves at linear speed. Foss and Konstantopoulos~\cite{FK} had derived a special case of this result, when the distribution $\mu$ has finite expectation.
\begin{theorem}
\label{thm:existsSpeed}
Let $(X_n)$ be an infinite-bin model with distribution $\mu$, starting from an admissible configuration $X_0$. For any $n \in \N$, we write $M_n$ for the position of the front of $X_n$. There exists $v_\mu \in [0,1]$, depending only on the distribution $\mu$, such that
\[
  \lim_{n \to +\infty} \frac{M_n}{n} = v_\mu \quad \text{a.s.}
\]
\end{theorem}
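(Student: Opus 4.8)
The plan is to apply Kingman's subadditive ergodic theorem to the sequence of front positions. The natural object to consider is the two-parameter family $M_{m,n}$ defined as the displacement of the front when one runs the infinite-bin model for the steps indexed $m+1, \dots, n$, starting from a suitable reference configuration; the subtlety is that the increment of the front between times $m$ and $n$ genuinely depends on the whole configuration $X_m$, not just on $M_m$, so I first need a monotonicity/coupling statement showing that starting from a ``fuller'' configuration only pushes the front further right. Concretely, say $X \preceq Y$ if for every $j$ the number of balls in bins of index $\geq j$ is at least as large in $Y$ as in $X$; one checks directly from the definition of $\Phi_k$ that this partial order is preserved by each move $\Phi_k$, so it is preserved by the random dynamics driven by a fixed realization of $(\xi_n)$. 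The extremal configuration for this order (among admissible ones with front at $0$, say) is the one with a single ball in each bin of index $\leq 0$; call it $X^\star$. Running the dynamics from $X^\star$ with the shifted noise $(\xi_{m+k})_{k \geq 1}$ for $n-m$ steps yields a front displacement $Y_{m,n}$, and the coupling gives $M_n - M_m \leq Y_{m,n}$, while starting from the emptiest relevant configuration gives a matching lower bound of the same form — this sandwiching is what lets one reduce an arbitrary starting configuration to the canonical one.

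Next I would verify the hypotheses of Kingman's theorem for $(Y_{m,n})$. Stationarity and the required distributional shift-invariance are immediate because $(\xi_n)$ is i.i.d.\ and $Y_{m,n}$ is a fixed measurable function of $\xi_{m+1}, \dots, \xi_n$. Subadditivity, $Y_{0,n} \leq Y_{0,m} + Y_{m,n}$, follows from the monotonicity above: after $m$ steps from $X^\star$ the configuration lies above a shifted copy of $X^\star$ (shifted by $Y_{0,m}$), and applying the remaining $n-m$ moves to that larger configuration moves the front at least as far as applying them to the shifted $X^\star$. The integrability condition is trivial here since each move increases the front by at most $1$, so $0 \leq Y_{m,n} \leq n-m$; in particular the time-constant is finite and lies in $[0,1]$. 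Kingman then yields $Y_{0,n}/n \to v_\mu$ a.s.\ and in $L^1$ for some constant $v_\mu \in [0,1]$ depending only on $\mu$, and the sandwich argument transfers the same limit to $M_n/n$ for an arbitrary admissible $X_0$.

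The main obstacle is the first step: setting up the partial order on configurations and proving rigorously that it is preserved by every $\Phi_k$, including the bookkeeping about how the rank $k$ of a ball and the index $b_k$ of its bin behave under a monotone perturbation of the configuration (adding balls can shift which ball is the $k$-th one counted from the right). One has to argue that although the identity of the $k$-th ball changes, its bin index can only move right, which is exactly what is needed. Everything after that — checking stationarity, subadditivity, and integrability — is routine. A secondary point worth handling carefully is the reduction to a canonical initial configuration when $X_0$ has infinitely many balls in some bins or an ill-defined ``amount of mass''; here one uses that only finitely many balls are touched in the first $n$ steps, so the front displacement up to time $n$ depends only on a bounded window of $X_0$, and the monotonicity bounds still apply within that window.
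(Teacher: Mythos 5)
Your reduction has its comparisons backwards, and, more importantly, it is missing the step that makes the theorem non-trivial. On the signs: among admissible configurations with front at $0$, the configuration $X^\star$ with a single ball in each bin of index $\leq 0$ is the \emph{smallest} one for the order $\preccurlyeq$, and monotonicity of the maps $\Phi_k$ (applied to two configurations with the same front position) shows that the fuller configuration has the \emph{larger} front displacement. Hence the quantity $Y_{m,n}$ computed from $X^\star$ satisfies $M_n-M_m\geq Y_{m,n}$, not $\leq$, and the ``emptiest relevant configuration'' you invoke for the matching lower bound is this very same $X^\star$; the configuration that yields an upper bound on increments is the maximal one, with infinitely many balls in every bin up to the front. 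Consistently with this, your own argument for subadditivity in fact proves superadditivity, $Y_{0,n}\geq Y_{0,m}+Y_{m,n}$, since after $m$ steps the configuration dominates the translate of $X^\star$. These slips are repairable: Kingman's theorem applies to the superadditive array built from the minimal configuration and to the subadditive array built from the maximal one (each $Y_{m,n}$ is a function of $\xi_{m+1},\dots,\xi_n$ alone and satisfies $0\leq Y_{m,n}\leq n-m$), producing two almost sure limits $\underline{v}$ and $\bar{v}$.

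The genuine gap is that the sandwich then only gives $\underline{v}\leq\liminf M_n/n\leq\limsup M_n/n\leq\bar{v}$, and nothing in your proposal shows $\underline{v}=\bar{v}$. Kingman's theorem does not identify the two constants, and this identification --- equivalently, the fact that the speed does not depend on the initial configuration --- is precisely the substantive content of the statement. The paper's proof is organized around this point: for $\mu$ with support bounded by $K$, the dynamics seen from the front is a Markov chain on the finite set $S_K$ (Lemma~\ref{lem:markov}), whose ergodic behaviour yields a speed $1/\E_\pi(Z_1)$ independent of the starting configuration (Proposition~\ref{prop:speedfinite}); a general $\mu$ is then squeezed between truncated dynamics whose speeds differ by at most $\mu([K+1,+\infty))$ (Proposition~\ref{prop:speedIBM}). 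To complete your route you would need an ingredient playing the same role --- for instance a regeneration or coupling argument showing that the processes started from the minimal and maximal reference configurations have the same speed --- and for unbounded $\mu$ this is not immediate: the process started from $X^\star$ never dominates a translate of the infinite-ball configuration, and arbitrarily deep balls can influence the front when arbitrarily large $\xi_n$ occur. As written, the assertion that ``the sandwich argument transfers the same limit'' is exactly where the proof still has to be done.
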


In the next result, we obtain an explicit formula for the speed $v_\mu$ of the IBM($\mu$), as a series. To give this formula we first introduce some notation. Recalling that $\N$ is the set of positive integers, we denote by $\mathcal{A}$ the set of words on the alphabet $\N$, \textit{i.e.} the set of all finite-length sequences of elements of $\N$. Given a non-empty word $\alpha\in\mathcal{A}$, written $\alpha=(\alpha_1,\alpha_2,\ldots,\alpha_n)$ (where the $\alpha_i$ are the letters of $\alpha$), we denote by $L(\alpha) = n$ the length of $\alpha$. The empty word is denoted by $\emptyset$.

Fix an infinite-bin model configuration $X$. We define the subset $\mathcal{P}_X$ of $\mathcal{A}$ as follows: a word $\alpha$ belongs to $\mathcal{P}_X$ if it is non-empty, and if starting from the configuration $X$ and applying successively the moves $\Phi_{\alpha_1},\ldots,\Phi_{\alpha_n}$, the last move $\Phi_{\alpha_n}$ results in placing a ball in a previously empty bin.

Given a word $\alpha\in\mathcal{A}$ which is not the empty word, we set $\varpi \alpha \in\mathcal{A}$ to be the word obtained from $\alpha$ by removing the first letter. We also set $\varpi \emptyset=\emptyset$. We define the function $\epsilon_X:\mathcal{A}\rightarrow\{-1,0,1\}$ as follows:
\[
  \epsilon_X(\alpha)= \ind{\alpha \in \mathcal{P}_X}-\ind{\varpi \alpha \in \mathcal{P}_X}.
\]

\begin{theorem}
\label{thm:seriesformula}
Let $X$ be an admissible configuration and $\mu$ a probability distribution on $\N$. We define the weight of a word $\alpha=(\alpha_1,\ldots,\alpha_n)$ by
\[
  W_{\mu}(\alpha)=\prod_{i=1}^n \mu\left(\left\{\alpha_i\right\}\right) = \P(\alpha = (\xi_1,\ldots \xi_n)).
\]
If $\sum_{\alpha\in\mathcal{A}} |\epsilon_X(\alpha)|W_\mu(\alpha) < +\infty$, then
\begin{equation}
\label{eq:generalseriesformula}
  v_\mu =\sum_{\alpha\in\mathcal{A}} \epsilon_X(\alpha)W_\mu(\alpha).
\end{equation}
\end{theorem}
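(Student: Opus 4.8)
The plan is to express $M_n$, the position of the front after $n$ moves, as a telescoping sum of increments and then identify the expectation of each increment with a partial sum of the series in~\eqref{eq:generalseriesformula}. Write $M_n - M_0 = \sum_{k=1}^n (M_k - M_{k-1})$, where each increment $M_k - M_{k-1}$ is either $0$ or $1$, and equals $1$ precisely when the move $\Phi_{\xi_k}$ applied to $X_{k-1}$ puts a ball into a previously empty bin. Observe that, by the definition of $\calP_X$, the event $\{M_k - M_{k-1} = 1\}$ is exactly the event $\{(\xi_1,\ldots,\xi_k) \in \calP_X\}$: indeed the word $(\xi_1,\ldots,\xi_k)$ records which balls are hit in succession starting from $X_0 = X$, and belonging to $\calP_X$ means the $k$-th such move lands in an empty bin. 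Hence
\[
  \E[M_n - M_0] = \sum_{k=1}^n \P\bigl((\xi_1,\ldots,\xi_k) \in \calP_X\bigr) = \sum_{k=1}^n \ \sum_{\substack{\alpha \in \calP_X \\ L(\alpha) = k}} W_\mu(\alpha).
\]

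Next I would rewrite this as a sum of $\epsilon_X$. The key algebraic identity is that for each fixed length $k \geq 1$,
\[
  \sum_{L(\alpha) = k} \epsilon_X(\alpha) W_\mu(\alpha)
  = \sum_{\substack{\alpha \in \calP_X\\ L(\alpha)=k}} W_\mu(\alpha) - \sum_{\substack{\alpha \in \calP_X\\ L(\alpha)=k-1}} W_\mu(\alpha),
\]
because the map $\alpha \mapsto \varpi\alpha$ sends length-$k$ words onto length-$(k-1)$ words, with each length-$(k-1)$ word $\beta$ having preimages $\{(a,\beta_1,\ldots,\beta_{k-1}) : a \in \N\}$ whose weights sum to $W_\mu(\beta) \sum_a \mu(\{a\}) = W_\mu(\beta)$. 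Summing the displayed identity over $k = 1,\ldots,n$ telescopes to
\[
  \sum_{1 \le L(\alpha) \le n} \epsilon_X(\alpha) W_\mu(\alpha) = \sum_{\substack{\alpha \in \calP_X \\ L(\alpha) = n}} W_\mu(\alpha) \;=\; \P(M_n - M_{n-1} = 1) \;\ge\; 0,
\]
and combining with the previous display gives $\E[M_n - M_0] = \sum_{1 \le L(\alpha) \le n-1} \epsilon_X(\alpha) W_\mu(\alpha) + (\text{a nonnegative remainder})$, or more cleanly $\E[M_n - M_0] = \sum_{k=1}^n \sum_{L(\alpha)=k}\bigl(\text{partial telescoping}\bigr)$; in any case, dividing by $n$,
\[
  \frac{\E[M_n - M_0]}{n} = \frac1n \sum_{k=1}^{n} \ \sum_{\substack{\alpha\in\calP_X\\ L(\alpha)=k}} W_\mu(\alpha).
\]
Under the hypothesis $\sum_{\alpha} |\epsilon_X(\alpha)| W_\mu(\alpha) < \infty$, the tail terms $\sum_{L(\alpha)=k, \alpha \in \calP_X} W_\mu(\alpha) = \sum_{L(\alpha) \le k}\epsilon_X(\alpha)W_\mu(\alpha) - \sum_{L(\alpha)\le k-1}\epsilon_X(\alpha)W_\mu(\alpha)$ form the increments of a convergent series, hence tend to $0$; by Cesàro, $\E[M_n - M_0]/n \to \sum_{\alpha \in \calA} \epsilon_X(\alpha) W_\mu(\alpha)$.

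Finally I would upgrade the convergence in expectation to the almost-sure statement of Theorem~\ref{thm:existsSpeed}. Since $M_n/n \to v_\mu$ a.s. by Theorem~\ref{thm:existsSpeed} and $0 \le M_n - M_0 \le n$, the bounded convergence theorem gives $\E[M_n - M_0]/n \to v_\mu$ as well, so the Cesàro limit computed above must equal $v_\mu$, proving~\eqref{eq:generalseriesformula}. The main obstacle is the combinatorial bookkeeping in the telescoping step: one must verify carefully that $\{M_k - M_{k-1} = 1\} = \{(\xi_1,\ldots,\xi_k)\in\calP_X\}$ (this is essentially the definition of $\calP_X$ but deserves a clean statement, perhaps isolated as a lemma), and that the reindexing $\alpha \mapsto \varpi\alpha$ together with the normalization $\sum_a \mu(\{a\}) = 1$ produces exactly the claimed cancellation, including a careful treatment of the boundary term at $k=1$ where $\varpi\alpha = \emptyset \notin \calP_X$. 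The absolute-summability hypothesis is used precisely to justify interchanging the (now only conditionally convergent) double sum over $\calA$ and to guarantee the Cesàro averages converge to the same limit as the sequence.
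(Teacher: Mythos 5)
Your proof is correct and essentially the paper's: you arrive at the same key identity $\E[M_n-M_0]=\sum_{k=1}^n\sum_{\alpha\in\mathcal{A}:L(\alpha)\le k}\epsilon_X(\alpha)W_\mu(\alpha)$ and conclude exactly as in Lemma~\ref{lem:seriesformula} followed by the Stolz--Ces\`aro step, invoking Theorem~\ref{thm:existsSpeed} and bounded convergence to identify the limit with $v_\mu$. The only cosmetic difference is that you telescope in expectation over prefix events, integrating out the removed first letter via $\sum_{a\in\N}\mu(\{a\})=1$, whereas the paper first establishes the deterministic subword decomposition \eqref{eqn:inverseEpsilon} and then takes expectations using the i.i.d.\ structure of $(\xi_n)$.
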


\begin{remark}
\label{rem:dependencyX}
One of the most striking features of \eqref{eq:generalseriesformula} is that whereas for any $\alpha \in \mathcal{A}$, $X \mapsto \epsilon_X(\alpha)$ is a non-constant function of $X$, $v_\mu$ does not depend of this choice of configuration. As a result, Theorem~\ref{thm:seriesformula} gives in fact an infinite number of formulas for the speed $v_\mu$ of the IBM($\mu$).
\end{remark}

Theorem~\ref{thm:seriesformula} can be extended to prove the following result:
\[
  v_\mu = \lim_{n \to +\infty} \frac{1}{n}\sum_{k = 1}^n \sum_{\alpha \in \mathcal{A} : L(\alpha) \leq k} \epsilon_X(\alpha) W_\mu(\alpha).
\]
In other words, if we define $\sum_{\alpha\in\mathcal{A}} \epsilon_X(\alpha)W_\mu(\alpha)$ as the Ces\`aro mean of its partial sums (on words of finite length), \eqref{eq:generalseriesformula} holds for any probability distribution $\mu$ and admissible configuration $X$.

\subsection{Longest increasing paths in Barak-Erd\texorpdfstring{\H{o}}{o}s graphs}

Using the coupling introduced by Foss and Konstantopoulos between Barak-Erd\H{o}s graphs and infinite-bin models, we use the previous results to extract information on the function $C$ defined in \eqref{eq:defC}. Firstly, we prove that for $p$ large enough (i.e. for dense Barak-Erd\H{o}s graphs), the function $C$ is analytic and we obtain the power series expansion of $C(p)$ centered at~$1$. Secondly, we provide the first two terms of the asymptotic expansion of $C(p)$ as $p\rightarrow0$.

We deduce from Theorem~\ref{thm:seriesformula} the analyticity of $C(p)$ for $p$ close to $1$. For any word $\alpha \in \mathcal{A}$, we define the height of $\alpha$ to be
\[
H(\alpha)=\sum_{i=1}^{L(\alpha)}{\alpha_i}-L(\alpha).
\]
For any $k \in \N$ and admissible configuration $X$, we set
\begin{equation}
  \label{eqn:formulaCoeff}
  a_k = \sum_{\alpha \in \mathcal{A}: H(\alpha) \leq k, L(\alpha) \leq k+1} \epsilon_X(\alpha) (-1)^{k-H(\alpha)} \binom{L(\alpha)}{k-H(\alpha)}.
\end{equation}

\begin{theorem}
\label{thm:powerseries}
The function $C$ is analytic on $\left( \frac{1}{2}, 1 \right]$ and for $p \in \left( \frac{3-\sqrt{2}}{2}, 1 \right]$,
\[C(p) = \sum_{k\geq0} a_k (1-p)^k.\]
\end{theorem}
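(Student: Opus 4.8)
The plan is to deduce Theorem~\ref{thm:powerseries} from the series formula of Theorem~\ref{thm:seriesformula} applied to the geometric distribution $\mu_p$, for which $v_{\mu_p} = C(p)$. Writing $q = 1-p$, the weight of a word $\alpha = (\alpha_1, \ldots, \alpha_n)$ under $\mu_p$ is $W_{\mu_p}(\alpha) = \prod_{i=1}^n p(1-p)^{\alpha_i - 1} = p^{L(\alpha)} q^{H(\alpha)}$, since $\sum_i (\alpha_i - 1) = H(\alpha)$. Substituting $p = 1-q$ gives $W_{\mu_p}(\alpha) = (1-q)^{L(\alpha)} q^{H(\alpha)}$, so formally
\[
C(p) = \sum_{\alpha \in \mathcal{A}} \epsilon_X(\alpha)\, (1-q)^{L(\alpha)} q^{H(\alpha)}
= \sum_{\alpha \in \mathcal{A}} \epsilon_X(\alpha)\, q^{H(\alpha)} \sum_{j \geq 0} \binom{L(\alpha)}{j} (-q)^{j}.
\]
Collecting the coefficient of $q^k$ then yields the claimed formula $C(p) = \sum_{k \geq 0} a_k q^k$ with $a_k$ as in \eqref{eqn:formulaCoeff}, where the constraints $H(\alpha) \le k$ and $L(\alpha) \le k+1$ come from requiring $H(\alpha) + j = k$ with $0 \le j \le L(\alpha)$ and from the key combinatorial fact that any word $\alpha$ contributing to $\epsilon_X$ (i.e.\ with $\alpha$ or $\varpi\alpha$ in $\mathcal{P}_X$) must satisfy $L(\alpha) \le H(\alpha) + 1$. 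This last inequality should follow from the structure of the infinite-bin model: a word lands a ball in a previously empty bin only if it advances the front, and each of the $L(\alpha)$ moves can advance the front by at most... more precisely one needs that a path placing its final ball in an empty bin cannot be too long relative to its height, which I would prove by a direct combinatorial argument on how moves interact with the front (or cite the relevant lemma if it appears elsewhere in the paper).

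The first real task is therefore to justify interchanging the two sums and rearranging into a power series in $q$, which requires absolute convergence. For a fixed $k$, the inner sum defining $a_k$ is finite precisely because of the two constraints $H(\alpha) \le k$, $L(\alpha) \le k+1$ together with the fact that for fixed length $n$ and fixed height $h$ there are only finitely many words (the letters are positive integers summing to $n + h$). So each $a_k$ is a finite sum and well-defined. To get actual (not merely Ces\`aro) convergence of $\sum_k a_k q^k$ to $C(p)$ on a neighborhood of $1$, I would establish a quantitative bound: the number of words $\alpha$ with $L(\alpha) = n$ and $H(\alpha) = h$ appearing with $\epsilon_X(\alpha) \neq 0$ grows at most exponentially, say like $c^{n+h}$ for some constant $c$, uniformly in the choice of $X$ (here one can fix a convenient $X$, e.g.\ the "fully packed" configuration, exploiting the $X$-independence noted in Remark~\ref{rem:dependencyX}). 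Combined with $n \le h+1$, this gives $\sum_\alpha |\epsilon_X(\alpha)| W_{\mu_p}(\alpha) \le \sum_h (\text{const})\, (c p)^{h+1} q^h < \infty$ for $q$ small enough, which is the hypothesis of Theorem~\ref{thm:seriesformula} and also legitimizes the rearrangement by Fubini/Tonelli.

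To pin down the precise radius, note $W_{\mu_p}(\alpha) = p^n q^h \le p^{h+1} q^h$ when $n \le h+1$ and $p \ge 1/2$ (so that $p^n \le p^{h+1}$ actually needs $n \ge h+1$; one must be careful and instead bound $p^n q^h$ using $n \le h+1$ to write it as $p \cdot (pq)^? $ — the cleanest route is: with at most $2^{n+h}$ relevant words of length $n$ and height $h$, the tail is dominated by $\sum_{h \ge 0} 2^{2h+1} p^{h+1} q^h \cdot (\text{polynomial})$, convergent iff $4pq < 1$, i.e.\ $q < (2-\sqrt 2)/2$, i.e.\ $p > (3-\sqrt 2)/2$). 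This matches the stated threshold, which strongly suggests the intended counting bound is exactly of the form $2^{n+h}$ (or that $n \le h+1$ forces at most $\binom{n+h-1}{n-1} \le 2^{n+h}$ compositions). Finally, analyticity of $C$ on the full interval $(1/2, 1]$ --- beyond the region where the explicit series converges --- would be obtained separately, presumably by showing $C$ extends analytically via a different argument (e.g.\ $C$ is a uniform limit of analytic functions, or via an ODE/fixed-point characterization valid for $p > 1/2$), so that the power series centered at $1$ and the analytic function agree on their common domain by the identity theorem.

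The main obstacle I anticipate is the combinatorial heart of the matter: proving the bound $L(\alpha) \le H(\alpha) + 1$ for every word with $\epsilon_X(\alpha) \neq 0$, and the accompanying exponential count of such words, uniformly in $X$. Everything else (substitution, binomial expansion, extracting coefficients, Fubini) is bookkeeping, but this structural estimate about how infinite-bin moves advance the front is where the genuine content lies, and getting the constant sharp enough to reach $p > (3-\sqrt 2)/2$ rather than a weaker threshold will require the counting to be done carefully.
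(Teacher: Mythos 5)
Your skeleton is the same as the paper's (apply Theorem~\ref{thm:seriesformula} to $\mu_p$, use the structural fact that $\epsilon_X(\alpha)=0$ unless $L(\alpha)\leq H(\alpha)+1$, expand binomially in $q=1-p$ and collect coefficients to get \eqref{eqn:formulaCoeff}), but the two analytic steps that carry the actual content of the theorem are wrong or missing. First, the Fubini step: absolute convergence of $\sum_{\alpha}|\epsilon_X(\alpha)|W_{\mu_p}(\alpha)$ does \emph{not} legitimize reorganizing the series into powers of $q$. After expanding $(1-q)^{L(\alpha)}$, you must dominate the double sum with all signs discarded, i.e.\ control $\sum_{\alpha}|\epsilon_X(\alpha)|(1+q)^{L(\alpha)}q^{H(\alpha)}=\sum_{\alpha}|\epsilon_X(\alpha)|(2-p)^{L(\alpha)}(1-p)^{H(\alpha)}$, which is strictly larger than the series you estimate (the factor $(1+q)^{L(\alpha)}$, not $p^{L(\alpha)}\leq 1$, is what matters for long words). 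This dominated series is exactly where the threshold comes from: using the composition count $\#\{\alpha: L(\alpha)=l, H(\alpha)=h\}=\binom{h+l-1}{l-1}$ together with a Chernoff bound for an associated biased random walk, it converges precisely when $2\sqrt{(2-p)(1-p)}<1$, i.e.\ $q<(\sqrt{2}-1)/2$, i.e.\ $p>(3-\sqrt{2})/2$. Your own radius computation is incorrect: $4pq<1$ holds for every $p\neq\tfrac12$ (it is $(2p-1)^2>0$), it is not equivalent to $q<(2-\sqrt{2})/2$, and that number is not the stated threshold, which is $1-\tfrac{3-\sqrt 2}{2}=\tfrac{\sqrt 2-1}{2}\approx 0.207$. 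Likewise, the hypothesis of Theorem~\ref{thm:seriesformula} (absolute convergence of $\sum_\alpha|\epsilon_X(\alpha)|W_p(\alpha)$) is needed on all of $(\tfrac12,1]$, and a crude exponential count with ``$q$ small enough'' does not give that; the paper gets it from the same composition count plus Cram\'er's theorem.

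Second, analyticity on the whole interval $\left(\tfrac12,1\right]$ is not established by your proposal: being a uniform limit of (rational, hence analytic) functions on a real interval does not imply real-analyticity, and no fixed-point or ODE characterization of $C$ is available here. The paper proves analyticity by running the very same domination argument centered at an arbitrary $r\in(\tfrac12,1]$: writing $p=r-x$ and bounding the rearranged series by $\sum_\alpha|\epsilon_X(\alpha)|(2r-p)^{L(\alpha)}(1-p)^{H(\alpha)}$, which converges when $2\sqrt{(2r-p)(1-p)}<1$; since this condition allows some $p<r$ whenever $r>\tfrac12$, the expansion around each such $r$ has positive radius, and the case $r=1$ gives the stated series with the $(3-\sqrt 2)/2$ bound. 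Finally, the combinatorial input you defer --- $\epsilon_X(\alpha)=0$ when $L(\alpha)>H(\alpha)+1$ --- is Lemma~\ref{lem:lengthsmallerheight} of the paper and is proved via renovation events (Lemma~\ref{lem:renovation}): if some position $n\geq 2$ satisfies $\alpha_{n+k}\leq k+1$ for all admissible $k$, the suffix of the word produces the same relative construction whether or not the first letter is deleted, so $\epsilon_X(\alpha)=0$; a path-minimum argument on $S(k)=\sum_{i\leq k}(\alpha_i-2)$ shows such a position exists as soon as $L(\alpha)>H(\alpha)+1$. Without this lemma and without the correct dominating series, your argument does not reach either of the stated constants.
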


Similarly to what has been observed in Remark~\ref{rem:dependencyX}, this result proves that the value of $a_k$ does not depend on the configuration $X$, justifying \textit{a posteriori} the notation. In a recent work \cite{MR} accomplished after the present article was completed, we show that $C(p)$ is actually analytic on $(0,1]$, so the bound $\frac{1}{2}$ in the above theorem is not optimal. Similarly, we do not expect the bound $\frac{3 - \sqrt{2}}{2}$ for the radius of convergence of the Taylor expansion at $1$ to be optimal. Numerical simulations tend to suggest that the power series expansion of $C(p)$ at $p=1$ has a radius of convergence between $0.5$ and $1$.

\begin{remark}
\label{rem:numerical}
Using \eqref{eqn:formulaCoeff} and Lemma~\ref{lem:lengthsmallerheight}, it is possible to explicitly compute as many coefficients of the power series expansion as desired, by picking a configuration $X$ and computing quantities of the form $\epsilon_X(\alpha)$ for finitely many words $\alpha\in\mathcal{A}$. For example, we observe that as $q\rightarrow0$,
\[
C(1-q)=1-q+q^2-3q^3+7q^4-15q^5+29q^6-54q^7+102q^8+O(q^9).
\]
It is clear from formula \eqref{eqn:formulaCoeff} that $(a_k)$ is integer-valued. Based on our computations, we conjecture that $((-1)^ka_k, k \geq 0)$ is non-negative and non-decreasing.
\end{remark}

We now turn to the asymptotic behaviour of $C(p)$ as $p \to 0$, i.e. the length of the longest increasing path in sparse Barak-Erd\H{o}s graphs. We precise the asymptotic estimate obtained by Newman~\cite{N}, namely that $C(p) \sim e p$ as $p \to 0$.
\begin{theorem}
\label{thm:smallp}
We have $\displaystyle C(p) = e p - \frac{\pi^2 e}{2}p (-\log p)^{-2} + o(p (-\log p)^{-2})$.
\end{theorem}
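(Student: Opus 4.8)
The plan is to read the small-$p$ behaviour of $C$ off the speed of a branching random walk with selection: the leading term $ep$ from the ``free'' walk, the correction from a Brunet--Derrida-type $(\log)^{-2}$ term. Work in the infinite Barak-Erd\H{o}s graph on $\{1,2,\ldots\}$ and let $G_k$ be the smallest vertex joined to $1$ by an increasing path with $k$ edges; since $G_{k+\ell}\le G_k+G'_\ell$ for an independent copy $G'_\ell$ of $G_\ell$, Kingman's subadditive ergodic theorem gives $G_k/k\to g(p)$ a.s. Writing $L^{(1)}_n$ for the longest path started at $1$, one has $L^{(1)}_n=\max\{k:G_k\le n\}$ and $L^{(1)}_n/n\to C(p)$, whence $C(p)=1/g(p)$. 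Now rescale vertex labels by $p$: the gaps between consecutive out-neighbours of a fixed vertex are i.i.d.\ geometric of parameter $p$, so after rescaling they converge to i.i.d.\ exponentials of parameter $1$, while out-neighbourhoods of distinct vertices stay independent; hence the tree of increasing paths issued from $1$ converges, as $p\to0$, to the branching random walk whose children sit at the atoms of a rate-$1$ Poisson process on $(0,\infty)$. For that limiting walk a one-line first-moment computation gives $\E\bigl[\#\{\text{depth-}k\text{ particles at position}\le a\}\bigr]=a^{k}/k!$, so its minimal displacement moves at speed $\sup_{\theta>0}\bigl(\theta^{-1}\log\theta\bigr)=1/e$, attained at $\theta^{\star}=e$. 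This gives $g(p)\sim 1/(ep)$, i.e.\ $C(p)=ep+o(p)$, recovering Newman's identity $C'(0)=e$.

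The second-order term reflects the one feature this limiting walk misses: two increasing paths reaching a common vertex merge, and because $G_{k+1}-G_k\approx 1/(ep)$ there are only $\Theta(p^{-1})$ integer vertices in each window where the front progresses, hence at most $\Theta(p^{-1})$ simultaneously active lineages. The exploration of fast paths should therefore behave like a branching random walk with selection of the $N\asymp p^{-1}$ leftmost particles (equivalently, with a density cap at scale $p^{-1}$). By the Brunet--Derrida correction (rigorously, for selection models, in the style of B\'erard--Gou\'er\'e and later refinements), the $N$-particle minimal-displacement speed exceeds the free speed $v_{\min}^{(\infty)}=1/e$ by $-\frac{\pi^{2}(\theta^{\star})^{2}v''(\theta^{\star})}{2(\log N)^{2}}\bigl(1+o(1)\bigr)$, where $v(\theta)=\theta^{-1}\log\theta$, $\theta^{\star}=e$ and $v''(\theta^{\star})=-e^{-3}$; since $(\theta^{\star})^{2}v''(\theta^{\star})=-1/e$ this correction equals $\frac{\pi^{2}}{2e(\log N)^{2}}\bigl(1+o(1)\bigr)$. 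Taking $N\asymp p^{-1}$, so $\log N=\lvert\log p\rvert\,(1+o(1))$, and inverting via $C(p)=p/v_{\min}(p)=p\bigl(\tfrac{1}{e}+\tfrac{\pi^{2}}{2e(\log p)^{2}}+o\bigl((\log p)^{-2}\bigr)\bigr)^{-1}$ yields exactly $C(p)=ep-\frac{p\pi^{2}e}{2(\log p)^{2}}+o\bigl(p(\log p)^{-2}\bigr)$.

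It remains to turn this picture into two matching inequalities. For the upper bound $C(p)\le ep-\frac{p\pi^{2}e}{2(\log p)^{2}}+o(\cdot)$, i.e.\ a lower bound on $G_k$, I would couple the reachable vertices near the front with a selection branching random walk whose parameter is a suitable multiple of $p^{-1}$: since merging at a common vertex only deletes lineages, the true front lags that of the selection model, and one invokes the known lower bound on the latter's speed. For the reverse inequality I would build an explicit fast path by running a barrier-controlled branching exploration inside a window of width $\asymp p^{-1}$, in the spirit of the second-moment and spine-type arguments used to bound the speed of selection models from below, while showing that the (rare) coalescences cost only $o\bigl(p(\log p)^{-2}\bigr)$. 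The main obstacle is exactly this quantitative coupling: the leading order $ep$ is robust, but the coefficient of $(\log p)^{-2}$ is sensitive to the precise way selection enters, so one must either locate a branching-random-walk-with-selection estimate valid for this non-standard infinite-offspring step law or establish the needed speed asymptotics for it, and then match the graph's merging mechanism to that model closely enough to recover the exact constant $\pi^{2}/2$ rather than merely the order of the correction.
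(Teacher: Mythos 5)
Your heuristic picture is sound and your constants check out (free speed $1/e$ for the rescaled minimal displacement, Brunet--Derrida correction $\pi^2/(2e(\log N)^2)$ with $N\asymp p^{-1}$, and the inversion $C(p)=p/v_{\min}(p)$ reproduces the stated expansion), but what you have is a programme, not a proof, and the missing step is precisely the hard one. The passage from ``paths merge at common vertices, and a window ahead of the front contains only $\Theta(p^{-1})$ integer sites'' to a two-sided comparison with an $N$-selection branching random walk is not justified: merging removes a lineage at whichever position the collision occurs, whereas $N$-selection removes the $N+1$-st best particle, so neither domination ``true front lags the selection model'' nor its converse follows from a soft monotonicity argument; and the easy domination by the \emph{free} (non-merged, non-selected) BRW only yields $C(p)\le ep(1+o(1))$, i.e.\ it misses exactly the $(\log p)^{-2}$ term the theorem is about. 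You would additionally need the selection-speed asymptotics for your specific offspring law (geometric gaps at fixed $p$, Poisson-process offspring only in the $p\to0$ limit, minimal-displacement convention), which is not an off-the-shelf citation of B\'erard--Gou\'er\'e and would have to be established with uniformity in $p$. You flag both issues yourself, so the proposal should be read as a correct heuristic with the quantitative core deferred.

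For comparison, the paper circumvents the merging-versus-selection approximation entirely. It works with the infinite-bin model: $C(p)=v_{\mu_p}$ with $\mu_p$ geometric (Foss--Konstantopoulos), and the monotonicity of the speed in the distribution \eqref{eqn:encadrementSpeed} sandwiches $v_{\mu_p}$ \emph{exactly} between speeds $w_k$ of IBMs with uniform distribution on $\{1,\dots,k\}$, $k\approx 1/p$ (Lemma~\ref{lem:coupp}, with the $k=\ceil{1/p^{1-\delta}}$ trick for the lower bound). The uniform IBM is then identified, via the Aldous--Pitman coupling and Poissonization, with a genuine $N$-selection continuous-time BRW with the simple offspring $\delta_0+\delta_1$ (so $\Lambda(\theta)=e^\theta$, $\phi^*=1$, $\tau^2=e$), and the only analytic work is Lemma~\ref{lem:bg}: extending the discrete-time Brunet--Derrida asymptotics of B\'erard--Gou\'er\'e/Mallein to this continuous-time selection model (Sections~\ref{subsec:estBrw}--\ref{subsec:proofBrw}). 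If you want to complete your route, the most economical fix is to replace your direct graph-exploration coupling by this IBM reduction, which converts your $N\asymp p^{-1}$ heuristic into exact stochastic comparisons.
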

In particular, this result proves that the function $C(p)$ has no finite second derivative at point $p=0$. 


Theorem~\ref{thm:smallp} is obtained by coupling the infinite-bin model with uniform distribution with a continuous-time branching random walk with selection (as observed by Aldous and Pitman \cite{AP}) and by extending to the continuous-time setting the results of B\'erard and Gou\'er\'e \cite{BeG10} on the asymptotic behaviour of a discrete-time branching random walk. Assuming that the conjecture of Brunet and Derrida~\cite{BD97} on the speed of a branching random walk with selection holds, and that the coupling of Aldous and Pitman is precise enough for the asymptotic expansion to be transferred to the infinite-bin model setting, the next term in the asymptotic expansion should be given by $3e\pi^2 p \tfrac{\log(-\log p)}{(-\log p)^3}$.

\begin{remark}
\label{rem:sparserGraphs}
With arguments similar to the ones used to prove Theorem~\ref{thm:smallp}, we expect that one can also obtain the asymptotic behaviour of $L_n(p)$ as $n \to +\infty$ and $p \to 0$ simultaneously, proving that:
\[
  L_n(p_n) = n e p_n - n\frac{\pi^2e }{2} p_n (-\log p_n)^2 + o(n p_n (\log p_n)^{-2}) \text{ in probability},
\]
as long as $p_n \gg \frac{(\log n)^3}{n}$. We expect a different behaviour if $p_n \sim \lambda \frac{(\log n)^3}{n}$. We mention that Itoh \cite{Itoh} studied the asymptotic behaviour of $L_n(a/n)$ as $n \to \infty$.
\end{remark}

\subsection*{Organisation of the paper}
We state more precisely the notation used to study the infinite-bin model in Section~\ref{sec:generalities}. We also introduce an increasing coupling between infinite-bin models, which is a key result for the rest of the article.

In Section~\ref{sec:finiteSupport}, we prove that the speed of an infinite-bin model with a measure of finite support can be expressed using the invariant measure of a finite Markov chain. This result is then used to prove Theorem~\ref{thm:existsSpeed} in the general case. We prove Theorem~\ref{thm:seriesformula} in Section~\ref{sec:seriesFormula} using a method akin to ``exact perturbative expansion''.

We review in Section~\ref{sec:geo} the Foss-Konstantopoulos coupling between Barak-Erd\H{o}s graphs and the infinite-bin model and use it to provide a sequence of upper and lower bounds converging exponentially fast to $C(p)$. This coupling is used in Section~\ref{sec:powerseries}, where we prove Theorem~\ref{thm:powerseries} using Theorem~\ref{thm:seriesformula}. Finally, we prove Theorem~\ref{thm:smallp} in Section~\ref{sec:smallp}, by extending the results of B\'erard and Gou\'er\'e \cite{BeG10} to compute the asymptotic behaviour of a continuous-time branching random walk with selection.

\section{Basic properties of the infinite-bin model}
\label{sec:generalities}

We write $\N$ for the set of positive integers, $\bar{\N} = \N \cup \{ +\infty\}$, $\Z_+$ for the set of non-negative integers and $\bar{\Z}_+ = \Z_+ \cup \{ +\infty\}$. We denote by
\[
  S = \left\{ X \in (\bar{\Z}_+)^\Z : \begin{array}{l}
    \exists m \in \Z : \forall j \in \Z,  X(j) = 0 \iff j > m \quad \text{and}\\
    \forall j \in \Z, X(j) = +\infty \Rightarrow X(j-1) = +\infty
  \end{array}\right\}
\]
the set of admissible configurations for an infinite-bin model. Note that the definition we use here is more restrictive than the one used, as a simplification, in the introduction. Indeed, we impose here that if a bin has an infinite number of balls, every bin to its left also has an infinite number of balls. However, this has no impact on our results, as the dynamics of an infinite-bin model does not affect bins to the left of a bin with an infinite number of balls. One does not create balls in a bin at distance greater than 1 from a non-empty bin.

We wish to point out that our definition of admissible configurations has been chosen out of convenience. Most of the results of this article could easily be generalized to infinite-bin models with a starting configuration belonging to
\[
  S^0 = \left\{ X \in (\bar{\Z}_+)^\Z : \lim_{k \to +\infty} X(k) = 0 \text{ and } \sum_{k \in \Z} X(k) = +\infty\right\},
\]
see e.g. Remark~\ref{rem:generalized}. They could even be generalized to configurations starting with a finite number of balls, if we adapt the dynamics of the infinite-bin model as follows. For any $n \in \N$, if $\xi_n$ is larger than the number of balls existing at time $n$, then the step is ignored and the IBM configuration is not modified. However, with this definition some trivial cases might arise, for example starting with a configuration with only one ball, and using a measure $\mu$ with $\mu(\{1\})=0$.

For any $X \in S$ and $k \in \Z$, we call $X(k)$ the number of balls at position $k$ in the configuration $X$. Observe that the set of non-empty bins is a semi-infinite interval of $\Z$. In particular, for any $X \in S$, there exists a unique integer $m \in \Z$ such that $X(m) \neq 0$ and $X(j)=0$ for all $j >m$. The integer $m$ is called the \emph{front} of the configuration.

Let $X \in S$, $k \in \Z$ and $\xi \in \N$. We denote by 
\[N(X,k) = \sum_{j=k}^{+\infty} X(j) \quad \text{and} \quad B(X,\xi) = \inf\{j \in \Z : N(X,j) < \xi \}\]
the number of balls to the right of $k$ and the leftmost position such that there are less than $\xi$ balls to its right respectively. Note that the position of the front in the configuration $X$ is given by $B(X,1)-1$. Observe that for any $X \in S$,
\begin{equation}
  \label{eqn:lips}
  \forall 1 \leq \xi \leq \xi', \; 0 \leq B(X,\xi)-B(X,\xi') \leq  \xi'-\xi.
\end{equation}

For $\xi \in \N$ and $X \in S$, we set $\Phi_\xi(X) = \left( X(j) + \ind{j = B(X,\xi)}, j \in \Z \right)$ the transformation that adds one ball to the right of the $\xi$-th rightmost ball in $X$. We extend the notation to allow $\xi \in \bar{\N}$, by setting $\Phi_\infty(X)=X$. We also introduce the shift operator $\tau(X) = \left( X(j-1), j \in \Z \right)$. We observe that $\tau$ and~$\Phi_\xi$ commute, i.e.
\begin{equation}
  \label{eqn:commutates}
  \forall X \in S, \forall \xi \in \bar{\N}, \Phi_\xi(\tau(X)) = \tau(\Phi_\xi(X)).
\end{equation}

Recall that an infinite-bin model consists in the sequential application of randomly chosen transformations $\Phi_\xi$, called move of type $\xi$. More precisely, given $\mu$ a probability measure on $\bar{\N}$ and $(\xi_n, n \geq 1)$ i.i.d. random variables with distribution $\mu$, the IBM($\mu$) $(X_n)$ is the Markov process on $S$ starting from $X_0 \in S$, such that for any $n \geq 0$, $X_{n+1} = \Phi_{\xi_{n+1}}(X_n)$.

We introduce a partial order on $S$, which is compatible with the infinite-bin model dynamics: for any $X,Y\in S$, we write
\[
  X \preccurlyeq Y \iff \forall j \in \Z, N(X,j) \leq N(Y,j) \iff \forall \xi \in \N, B(X,\xi) \leq B(Y,\xi).
\]
The functions $(\Phi_\xi)$ are monotone, increasing in $X$ and decreasing in $\xi$ for this partial order. More precisely
\begin{equation}
  \label{eqn:PhiMonotone}
  \forall X \preccurlyeq Y\in S, \; \forall 1 \leq \xi \leq \xi' \leq \infty, \; \Phi_{\xi'}(X) \preccurlyeq \Phi_\xi(Y).
\end{equation}
Moreover, the shift operator $\tau$ dominates every function $\Phi_\xi$, i.e.
\begin{equation}
  \label{eqn:PhiMonotone2}
  \forall X \preccurlyeq Y\in S, \; \forall 1 \leq \xi \leq \infty, \; \Phi_{\xi}(X) \preccurlyeq \tau(Y).
\end{equation}
As a consequence, infinite-bin models can be coupled in an increasing fashion.

\begin{proposition}
\label{prop:increasingCoupling}
Let $\mu$ and $\nu$ be two probability distributions on $\bar{\N}$, and $X_0 \preccurlyeq Y_0 \in S^0$. If $\mu([1,k]) \leq \nu([1,k])$ for any $k \in \N$, we can couple the IBM$(\mu)$ $(X_n)$ and the IBM$(\nu)$ $(Y_n)$ such that for any $n \geq 0$, $X_n \preccurlyeq Y_n$ a.s.
\end{proposition}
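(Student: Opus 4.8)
The plan is to realise both infinite-bin models on a single probability space by driving them with one i.i.d.\ sequence of uniform random variables. The hypothesis $\mu([1,k]) \le \nu([1,k])$ for all $k$ is exactly a stochastic domination, and once it is turned into a pointwise inequality between the two driving sequences, the order $X_0 \preccurlyeq Y_0$ propagates through the dynamics by the monotonicity \eqref{eqn:PhiMonotone} of the maps $\Phi_\xi$.

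First I would set up the monotone (quantile) coupling of the inputs. Let $(U_n, n \ge 1)$ be i.i.d.\ uniform on $[0,1]$, and for a probability $\rho$ on $\bar{\N}$ define the quantile function $G_\rho(u) = \inf\{ k \in \bar{\N} : \rho([1,k]) \ge u \}$, with the convention $\rho([1,+\infty]) = 1$. Then $G_\rho(U_1)$ has law $\rho$, so setting $\xi_n = G_\mu(U_n)$ and $\eta_n = G_\nu(U_n)$ yields i.i.d.\ sequences of laws $\mu$ and $\nu$ respectively, jointly i.i.d.\ in $n$ since each pair is a deterministic function of $U_n$. The assumption $\mu([1,k]) \le \nu([1,k])$ implies, letting $k \to +\infty$, that $\mu(\{+\infty\}) \ge \nu(\{+\infty\})$, and more importantly that $\{k : \mu([1,k]) \ge u\} \subseteq \{k : \nu([1,k]) \ge u\}$ for every $u \in (0,1]$, hence $G_\mu(u) \ge G_\nu(u)$. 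Therefore $\xi_n \ge \eta_n$ almost surely for all $n$. (Here it is crucial to keep track of the direction of the order: $\Phi_\xi$ is \emph{decreasing} in $\xi$, so the model with the larger measure in the sense $F_\mu \le F_\nu$ is the \emph{smaller} configuration, which is consistent with wanting $X_n \preccurlyeq Y_n$.)

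Second I would run the models with these coupled inputs: $X_{n+1} = \Phi_{\xi_{n+1}}(X_n)$ and $Y_{n+1} = \Phi_{\eta_{n+1}}(Y_n)$, started from the given $X_0 \preccurlyeq Y_0$; by construction $(X_n)$ is an IBM$(\mu)$ and $(Y_n)$ an IBM$(\nu)$. The claim $X_n \preccurlyeq Y_n$ for all $n$ follows by induction: the base case is the hypothesis, and assuming $X_n \preccurlyeq Y_n$, since $\eta_{n+1} \le \xi_{n+1} \le \infty$, relation \eqref{eqn:PhiMonotone} applied to the pair $(X_n, Y_n)$ with exponents $(\eta_{n+1}, \xi_{n+1})$ gives $\Phi_{\xi_{n+1}}(X_n) \preccurlyeq \Phi_{\eta_{n+1}}(Y_n)$, i.e.\ $X_{n+1} \preccurlyeq Y_{n+1}$. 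One also checks that the dynamics keeps configurations in $S^0$ (adding one ball preserves $\lim_k X(k)=0$ and total infinite mass) and that \eqref{eqn:PhiMonotone}, being a collection of pointwise inequalities between the functionals $N(\cdot,j)$, holds verbatim on $S^0$; alternatively one restricts to $S$, which suffices for all the applications in the paper.

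There is no serious obstacle: the argument is the standard quantile coupling followed by a one-line induction. The only points needing care are orienting the stochastic order correctly (because $\xi \mapsto \Phi_\xi$ reverses it) and handling the possible atom at $+\infty$ consistently, which the convention $\rho([1,+\infty]) = 1$ together with the identity $\Phi_\infty = \mathrm{id}$ dispatch cleanly.
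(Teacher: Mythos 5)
Your proof is correct and follows essentially the same route as the paper: the paper also constructs a monotone coupling $(\xi,\zeta)$ with $\xi\sim\mu$, $\zeta\sim\nu$, $\xi\geq\zeta$ a.s.\ (you merely make the quantile construction explicit) and then concludes by the same one-line induction using \eqref{eqn:PhiMonotone}. Your extra remarks on the orientation of the order and on the atom at $+\infty$ are sound but not a departure from the paper's argument.
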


\begin{proof}
As for any $k \in \N$, $\mu([1,k]) \leq \nu([1,k])$, we can construct a pair $(\xi,\zeta)$ such that $\xi$ has law $\mu$, $\zeta$ has law $\nu$ and $\xi \geq \zeta$ a.s. Let $(\xi_n,\zeta_n)$ be i.i.d. copies of $(\xi,\zeta)$, we set $X_{n+1} = \Phi_{\xi_{n+1}} (X_n)$ and $Y_{n+1} = \Phi_{\zeta_{n+1}}(Y_n)$. By induction, using~\eqref{eqn:PhiMonotone}, we immediately have $X_n \preccurlyeq Y_n$ for any $n \geq 0$.
\end{proof}

We extended in this section the definition of the IBM($\mu$) to measures with positive mass on $\{\infty\}$. As applying $\Phi_\infty$ does not modify the ball configuration, the IBM($\mu$) and the IBM($\mu(.|.<\infty)$) are straightforwardly connected.
\begin{lemma}
\label{lem:obvious}
Let $\mu$ be a probability measure on $\bar{\N}$ with $p := \mu(\{\infty\})<1$. We write $\nu$ for the measure verifying $\nu(\{k\}) = \frac{\mu(\{k\})}{1-p}$ for all $k \in \N$. Let $(X_n)$ be an IBM($\nu$)  and $(S_n)$ be an independent random walk with step distribution Bernoulli with parameter $1-p$. Then the process $(X_{S_n}, n \geq 0)$ is an IBM($\mu$).
\end{lemma}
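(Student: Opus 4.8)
The plan is to show that the two processes have the same law by checking they satisfy the same Markovian recursion. First I would observe that the IBM($\mu$) is, by definition, built from an i.i.d. sequence $(\eta_n, n \geq 1)$ of law $\mu$ on $\bar{\N}$ via $X_{n+1}' = \Phi_{\eta_{n+1}}(X_n')$, and that the effect of a step depends on whether $\eta_{n+1} = \infty$ (in which case $\Phi_\infty$ acts as the identity and the configuration is frozen) or $\eta_{n+1} < \infty$ (in which case a ball is genuinely added at position $B(X_n', \eta_{n+1})$). So the IBM($\mu$) is a lazy version of an IBM that only moves when the draw is finite.

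The key step is to make this laziness explicit. Decompose each $\eta_n$ as a pair: let $\beta_n = \ind{\eta_n < \infty}$, which is Bernoulli with parameter $1-p$ and, conditionally on $\{\beta_n = 1\}$, let $\eta_n$ have the law $\nu(\{k\}) = \mu(\{k\})/(1-p)$; conditionally on $\{\beta_n = 0\}$, $\eta_n = \infty$. Since the $\eta_n$ are i.i.d., the $\beta_n$ are i.i.d. Bernoulli$(1-p)$ and, on the sub-sequence of indices where $\beta_n = 1$, the corresponding values form an i.i.d. sequence of law $\nu$, independent of $(\beta_n)$. Now set $S_n = \sum_{j=1}^n \beta_j$, a random walk with Bernoulli$(1-p)$ steps, and let $(\xi_k, k \geq 1)$ enumerate the finite draws in order; then $(\xi_k)$ is i.i.d. of law $\nu$ and independent of $(S_n)$. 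One checks by induction on $n$ that the IBM($\mu$) configuration at time $n$ equals the IBM($\nu$) configuration at time $S_n$: at a lazy step ($\beta_{n+1} = 0$) both sides are unchanged and $S_{n+1} = S_n$; at a genuine step ($\beta_{n+1} = 1$) we apply $\Phi_{\xi_{S_n+1}}$ on both sides and $S_{n+1} = S_n + 1$. Writing $(X_m)$ for the IBM($\nu$) driven by $(\xi_k)$, this gives $X'_n = X_{S_n}$ for all $n$, which is exactly the claimed representation, with $(S_n)$ independent of $(X_m)$ by construction.

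Conversely, to get the statement in the direction as phrased (starting from an \emph{independent} pair $(X_m)$ and $(S_n)$ and concluding $(X_{S_n})$ is an IBM($\mu$)), I would run the same correspondence backwards: reconstitute the driving sequence $(\eta_n)$ by setting $\eta_{n+1} = \xi_{S_n + 1}$ if $S_{n+1} = S_n + 1$ and $\eta_{n+1} = \infty$ if $S_{n+1} = S_n$, and verify this $(\eta_n)$ is i.i.d. of law $\mu$; this is a routine check using the independence of the random walk increments from $(\xi_k)$ together with the fact that $\nu$ renormalised by the mass $1-p$ and then "diluted" by the Bernoulli$(1-p)$ thinning reproduces $\mu$. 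Then $X_{S_n}$ satisfies $X_{S_{n+1}} = \Phi_{\eta_{n+1}}(X_{S_n})$ by the same case analysis as above, so it is an IBM($\mu$).

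The only mildly delicate point — and the one I would be most careful about — is the measurability/independence bookkeeping in the thinning argument: one must make sure that the sequence of finite values $(\xi_k)$ extracted from $(\eta_n)$ is genuinely i.i.d. of law $\nu$ and genuinely independent of the thinning indicators $(\beta_n)$ (equivalently of $(S_n)$). This is the standard fact that an i.i.d. sequence, when split by an independent coin at each index, yields two independent i.i.d. sub-sequences; I would cite or quickly prove it, but there is no real obstacle here. Everything else is a deterministic induction on the IBM dynamics using $\Phi_\infty = \mathrm{id}$.
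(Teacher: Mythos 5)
Your proposal is correct and is exactly the argument the paper leaves implicit: since $\Phi_\infty$ acts as the identity, the IBM($\mu$) is a lazy version of the IBM($\nu$), and the Bernoulli thinning of the driving sequence identifies the lazy-step counter with the random walk $(S_n)$ independent of the finite draws of law $\nu$. The paper states the lemma without proof, regarding this decomposition as straightforward, and your write-up (including the careful independence bookkeeping for the thinned subsequence) fills it in correctly along the same lines.
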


In particular, assuming Theorem~\ref{thm:existsSpeed} holds, we would have $v_\mu = (1-p) v_\nu$ with the notation of the previous lemma.

\section{Speed of the infinite-bin model}
\label{sec:finiteSupport}

In this section, we prove the existence of a well-defined notion of speed of the front of an infinite-bin model. We first discuss the case when the distribution $\mu$ is finitely supported and the initial configuration is simple, then we extend it to any distribution $\mu$ and finally we generalize to any admissible initial configuration.

\subsection{Infinite-bin models with finite support}
\label{subsec:finite}

Let $\mu$ be a probability measure on $\bar\N$ with finite support, i.e. such that there exists $K \in \N$ verifying $\mu([K+1,+\infty))=0$. Let $(X_n)$ be an IBM($\mu$), we say that $(X_n)$ is an infinite-bin model with support bounded by $K$. One of the main observations of this subsection is that such an infinite-bin model can be studied using a Markov chain on a finite set. As a consequence, we obtain an expression for the speed of this infinite-bin model.

Given $K \in \N$, we introduce the set
\[
  S_K = \left\{ x \in \Z_+^{K-1} : \sum_{i=1}^{K-1} x_i < K \text{ and } \forall 1 \leq i \leq j \leq K-1, x_i=0 \Rightarrow x_j = 0 \right\}.
\]
For any $Y \in S_K$, we write $|Y| = \sum_{j=1}^{K-1} Y(j)$. We introduce
\[
  \Pi_K :\begin{array}{rcl}
  S & \longrightarrow & S_K\\
  X & \longmapsto & \left( X(B(X,K)+j-1), 1 \leq j \leq K-1 \right).
\end{array}
\]
For any $n \in \N$, we write $Y_n = \Pi_K(X_n)$, that encodes the set of balls that are close to the front. As the IBM has support bounded by $K$, the bin in which the $(n+1)$-st ball is added to $X_n$ depends only on the position of the front and on the value of $Y_n$. This reduces the study of the dynamics of $(X_n)$ to the study of $(Y_n, n \geq 1)$.
\begin{lemma}
\label{lem:markov}
The sequence $(Y_n)$ is a Markov chain on $S_K$ with a unique stationary probability distribution.
\end{lemma}

\begin{proof}
For any $1 \leq \xi \leq K$ and $Y \in S_K$, we denote by
\[
  \tilde{B}(Y,\xi) = \begin{cases}
    \min\{ k \geq 1 : \sum_{i=k}^{K-1} Y(i) < \xi \} & \mathrm{if} \quad |Y| \geq \xi\\
    1 & \mathrm{otherwise,}
  \end{cases}
\]
\[
 \tilde{\Phi}_\xi(Y) = \begin{cases}
    \left( Y(j) + \ind{j=\tilde{B}(Y,\xi)}, 1\leq j \leq K-1 \right) & \mathrm{if} \quad |Y| < K-1\\
    \left( Y(j+1) + \ind{j+1=\tilde{B}(Y,\xi)}, 1\leq j \leq K-2, 0 \right) & \mathrm{if} \quad |Y| = K-1.
  \end{cases}
\]
For any $X \in S$ and $\xi \leq K$, we have $B(X,\xi)=B(X,K) + \tilde{B}(\Pi_K(X),\xi)-1$. Moreover, we have $\Pi_K\left(\Phi_\xi(X)\right) = \tilde{\Phi}_\xi(\Pi_K(X))$.

\begin{figure}[htbp]
\centering
\includegraphics[height=2in]{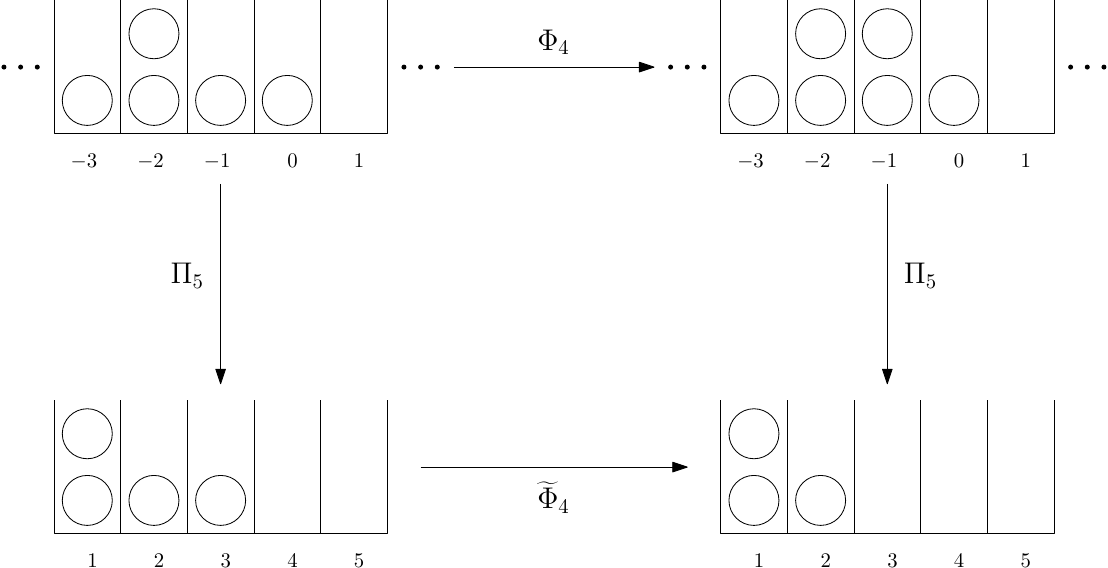}
\caption{``Commutation'' of $\Pi_5$ with $\Phi_4$ and $\tilde{\Phi}_4$.}
\label{fig:commutation}
\end{figure}

Let $(\xi_n)$ be i.i.d. random variables with law $\mu$ and $X_0 \in S$. For any $n \in \N$, we set $X_{n+1} = \Phi_{\xi_{n+1}}(X_n)$. Using the above observation, we have
\[
  Y_{n+1} = \Pi_K(X_{n+1}) = \Pi_K(\Phi_{\xi_{n+1}}(X_n))=\tilde{\Phi}_{\xi_{n+1}}(\Pi_K(X_n)) = \tilde{\Phi}_{\xi_{n+1}}(Y_n),
\]
thus $(Y_n)$ is a Markov chain.

Denote by $k$ the smallest integer in the support of $\mu$ and set $q:=\floor{\frac{K}{k}}$. One easily observes that, starting the chain $(Y_n)$ at an arbitrary state and applying moves of type $k$ sufficiently many times, one reaches the state with $q$ bins containing $k$ balls each. This entails that the finite state-space Markov chain $(Y_n)$ has a unique essential communicating class, hence it has a unique stationary probability distribution (see e.g. \cite[Proposition 1.26]{LPW}).
\end{proof}

For any $n \in \N$, the set of bins that are part of $Y_n$ represents the set of ``active'' bins in $X_n$, i.e. the bins in which a ball can be added at some time in the future with positive probability. The number of balls in $(Y_n)$ increases by one at each time step, until it reaches $K-1$. At this time, when a new ball is added, the leftmost bin ``freezes'', it will no longer be possible to add balls to this bin, and the ``focus'' is moved one step to the right.

We introduce a sequence of stopping times defined by
\[
  T_0=0 \quad \text{and} \quad T_{p+1} = \inf\{ n > T_p : |Y_{n-1}|=K-1 \}.
\]
We also set $Z_p = K-|Y_{T_p}|$ the number of balls in the bin that ``freezes '' at time $T_p$. For any $n \in \N$, we write $\tau_n =p$ for any $T_p \leq n < T_{p+1}$.
\begin{lemma}
\label{lem:linksXY}
Let $X_0 \in S$ such that $B(X_0,K)=1$, then
\begin{itemize}
  \item for any $p \geq 0$, $X_\infty(p) = Z_p$,
  \item for any $n \geq 0$ and $\xi \leq K$, $B(X_n,\xi) = \tau_n + B(Y_n,\xi)$.
\end{itemize}
\end{lemma}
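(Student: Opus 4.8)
The statement relates the "frozen" balls of the finite Markov chain picture to the limiting configuration $X_\infty$, and asserts that the position-of-front offset $B(X_n,\xi)$ decomposes as a bookkeeping term $\tau_n$ plus the $S_K$-level quantity $B(Y_n,\xi)$. Both assertions will be proved by induction on $n$, tracking carefully what happens at the freezing times $T_p$. The hypothesis $B(X_0,K)=1$ is exactly what makes the base case work: at time $0$, the active window $Y_0=\Pi_K(X_0)$ occupies bins $1,\dots,K-1$, and there are no balls frozen yet.

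For the second bullet I would argue as follows. We already have from the proof of Lemma~\ref{lem:markov} the identity $B(X_n,\xi) = B(X_n,K) + \tilde B(Y_n,\xi) - 1$ for $\xi\le K$, so it suffices to show $B(X_n,K) = \tau_n + 1 - \bigl(\text{something}\bigr)$; more precisely, writing out $B(Y_n,\xi)$ in the $S$-sense versus $\tilde B(Y_n,\xi)$ in the $S_K$-sense and reconciling the index shift, the claim reduces to $B(X_n,K) = \tau_n + 1$ when the window has not yet overflowed, i.e.\ to showing that the $K$-th largest ball sits at position $\tau_n+1$. Now $\tau_n$ counts how many times the window has overflowed up to time $n$, and each overflow event (at a time $T_{p+1}$) is precisely the moment when a new ball is added while $|Y_{n-1}|=K-1$: by the definition of $\tilde\Phi_\xi$ in the $|Y|=K-1$ branch, the leftmost bin of the window drops out and the window shifts one unit right, hence $B(X_n,K)$ increases by exactly $1$. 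Between consecutive overflow times $B(X_n,K)$ is constant, since $\tilde\Phi_\xi$ on the $|Y|<K-1$ branch does not move the window. So by induction $B(X_n,K) = B(X_0,K) + \tau_n = 1 + \tau_n$, which is the desired identity (modulo the index-shift reconciliation between $B(Y_n,\cdot)$ and $\tilde B(Y_n,\cdot)$, which is the routine part).

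For the first bullet, observe that once bin $p$ has frozen at time $T_p$, no further ball is ever added to it: the dynamics with support bounded by $K$ only ever adds a ball to $B(X_n,\xi)$ with $\xi\le K$, and by the identity just established together with \eqref{eqn:lips}, $B(X_n,\xi) \ge B(X_n,K) = \tau_n+1 > p$ for $n \ge T_p$ (once the window has moved past bin $p$). Therefore $X_\infty(p) = X_{T_p}(p)$. At time $T_p$ the bin at position $p+1$ is the leftmost bin of the window $Y_{T_p}$ (here I use $B(X_{T_p},K) = \tau_{T_p}+1 = p+1$, since $\tau_{T_p}=p$), and its number of balls is $|Y_{T_p}| - |Y_{T_p}$ restricted to bins $p+2,\dots| = K - Z_p$... — more cleanly: $Y_{T_p}$ has total mass $|Y_{T_p}| = K - Z_p$ by definition of $Z_p$, and one checks that $X_{T_p}(p) = Z_p$... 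Let me instead argue directly: the bin that freezes at time $T_p$ is, by the case analysis in $\tilde\Phi_\xi$, the former leftmost bin of the window, whose content at the freezing instant is $K - |Y_{T_p}|$ by the $S_K$ constraint $\sum_i Y(i) < K$; but this is exactly $Z_p$ by definition. Hence $X_\infty(p) = X_{T_p}(p) = Z_p$.

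The main obstacle is not any single hard estimate but rather getting the indexing conventions consistent: carefully distinguishing $B(\cdot,\cdot)$ computed in $S$ from $\tilde B(\cdot,\cdot)$ computed in $S_K$, handling the off-by-one in the definition of $\Pi_K$ (which starts reading at $B(X,K)$), and pinning down the exact instant "$|Y_{n-1}|=K-1$ and a ball is added" at which the window shifts. I would isolate these in one clean sub-claim — "for all $n$, $B(X_n,K) = \tau_n + 1$, and the window of $Y_n$ occupies bins $\tau_n+1,\dots,\tau_n+K-1$" — prove it by induction, and then read off both bullets as corollaries.
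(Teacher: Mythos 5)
Your proposal is correct and follows essentially the same route as the paper: the paper's proof is precisely the induction giving $B(X_{T_p},K)=p+1$ (equivalently $B(X_n,K)=\tau_n+1$), the identity $B(X_n,\xi)=B(X_n,K)+\tilde B(Y_n,\xi)-1$ from the proof of Lemma~\ref{lem:markov} for the second bullet, and the freezing observation $X_\infty(p)=X_{T_p}(p)=K-|Y_{T_p}|=Z_p$ for the first. Apart from the momentary $K-Z_p$ slip that you immediately correct, the argument matches the paper's.
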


\begin{proof}
By induction, for any $p \geq 0$, $B(X_{T_p},K) = p+1$. Consequently, for any $n \geq T_p$, we have $X_n(p) = X_{T_p}(p) = K - |Y_{T_p}| = Z_p$. Moreover, as
\[B(X_n,K) = \tau_n+1 \quad \text{and} \quad B(X_n,\xi) = B(X_n,K) + B(Y_n,\xi)-1,\]
we have the second equality.
\end{proof}

Using the above result, we prove that the speed of an infinite-bin model with finite support does not depend on the initial configuration. We also obtain a formula for the speed $v_\mu$, that can be used to compute explicit bounds.
\begin{proposition}
\label{prop:speedfinite}
Let $\mu$ be a probability measure with finite support and $X$ be an IBM($\mu$) with initial configuration $X_0\in S$. There exists $v_\mu \in [0,1]$ such that for any $\xi \in \N$, we have
\[\lim_{n \to +\infty} \frac{B(X_n,\xi)}{n} = v_\mu \quad \text{a.s.}\]
Moreover, setting $\pi$ for the invariant probability measure of $(Y_n)$ we have
\begin{equation}
  \label{eqn:equationSpeed}
  v_\mu = \frac{1}{\E_\pi(T_2-T_1)} = \frac{1}{\E_\pi(Z_1)}.
\end{equation}
\end{proposition}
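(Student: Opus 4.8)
The plan is to reduce everything to the finite Markov chain $(Y_n)=(\Pi_K(X_n))$ of Lemma~\ref{lem:markov}, where $K$ bounds the support of $\mu$, and then to read the speed off its asymptotics. Since $\mu$ has support bounded by $K$, \eqref{eqn:lips} bounds $B(X_n,1)-B(X_n,\xi)$ and $B(X_n,1)-B(X_n,K)$ by constants, so $B(X_n,\xi)/n$, $B(X_n,K)/n$ and $B(X_n,1)/n$ all have the same limit when one exists, and it is enough to treat $\xi=K$. Using $B(\tau^j X,\cdot)=B(X,\cdot)+j$ and the commutation \eqref{eqn:commutates}, replacing $X_0$ by $\tau^{1-B(X_0,K)}(X_0)$ only shifts every $B(X_n,K)$ by a fixed constant, so we may assume $B(X_0,K)=1$ and apply Lemma~\ref{lem:linksXY}: $B(X_n,K)=\tau_n+B(Y_n,K)$ with $B(Y_n,K)\le K-1$ bounded. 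Thus the problem becomes the convergence of $\tau_n/n$; moreover the right-hand side involves $X_0$ only through the chain $(Y_n)$ driven by $(\xi_n)$ and started at $Y_0=\Pi_K(X_0)\in S_K$, so the speed depends on the initial configuration only through $\Pi_K(X_0)$.

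For the formula, observe from the definition of $\tilde\Phi_\xi$ that $|Y_{n+1}|=|Y_n|+1$ as long as $|Y_n|<K-1$ and that a freeze occurs at step $n+1$ precisely when $|Y_n|=K-1$; hence $T_{p+1}-T_p=K-|Y_{T_p}|=Z_p$ and $\tau_n=\#\{0\le m<n:|Y_m|=K-1\}$. Since $(Y_n)$ is a Markov chain on the finite set $S_K$, the ergodic theorem makes $\tau_n/n$ converge a.s.; evaluating the limit along $n=T_p=\sum_{q<p}Z_q$ and squeezing for general $n$ (using $1\le Z_q\le K$), a standard Kac-type identity for finite chains identifies it as
\[
  \lim_{n\to\infty}\frac{\tau_n}{n}=\pi\bigl(\{y:|y|=K-1\}\bigr)=\frac{1}{\E_\pi(Z_1)}=\frac{1}{\E_\pi(T_2-T_1)}
\]
for an invariant measure $\pi$ of $(Y_n)$, the last equality using $T_2-T_1=Z_1$; in particular the limit lies in $[0,1]$.

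The main point is that this limit does not depend on $X_0$, equivalently not on $\Pi_K(X_0)$. Here I use the increasing coupling of Proposition~\ref{prop:increasingCoupling} with $\nu=\mu$ (equivalently \eqref{eqn:PhiMonotone}, for two IBM's driven by the same sequence). On one side, for $\theta\ge K$ the configuration $\Phi_1^{\theta}(X_0)$ satisfies $X_0\preccurlyeq\Phi_1^{\theta}(X_0)$ and $\Pi_K(\Phi_1^{\theta}(X_0))=(1,\dots,1)$, a state independent of $X_0$; monotonicity then gives that the speed from $X_0$ is at most the speed from $(1,\dots,1)$. On the other side, for any $X_0$ there is $W\preccurlyeq X_0$ with $\Pi_K(W)=(1,\dots,1)$ (place $K-1$ bins containing one ball each just to the left of the front of $X_0$, and fill the bins further left minimally), so the speed from $(1,\dots,1)$ is at most the speed from $X_0$. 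Hence all initial configurations give the same (a priori random) limit; since changing finitely many $\xi_j$ only alters $Y_m$ over a finite range and therefore, by what was just shown, does not change the limit, the limit is measurable with respect to the tail $\sigma$-field of $(\xi_n)$ and Kolmogorov's zero-one law makes it a constant $v_\mu\in[0,1]$. Because all recurrent classes of $(Y_n)$ then yield the same value $\pi(\{|y|=K-1\})=1/\E_\pi(Z_1)$, \eqref{eqn:equationSpeed} holds for every invariant measure $\pi$.

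The reductions and the ergodic computation of $\tau_n/n$ are routine. The hard part is the last paragraph: exhibiting the two configurations $\Phi_1^{\theta}(X_0)$ and — especially — the dominated configuration $W$ with the prescribed window $(1,\dots,1)$, and using the fact that the speed is a function of $\Pi_K(X_0)$ alone so that the increasing coupling can squeeze away the dependence on the initial configuration. A secondary technical point is the Kac-type identity linking $\pi(\{|y|=K-1\})$ to $1/\E_\pi(Z_1)$.
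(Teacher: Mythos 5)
Your argument is correct in substance and rests on the same reduction as the paper: project onto the finite chain $(Y_n)$, control $B(X_n,\xi)$ by $\tau_n$ up to bounded errors, and invoke the ergodic theorem for a finite chain. The two proofs diverge afterwards. The paper counts balls: since exactly one ball is added per step, $\sum_{j=1}^{\tau_n}Z_j+|Y_n|=n+O(1)$, hence $\sum_{j\leq p}Z_j/T_p\to1$, and the Ces\`aro limits of $(Z_j)$ and $(T_{p+1}-T_p)$ are then identified ``by ergodicity of $(Y_n)$''; it never discusses uniqueness of the invariant measure nor the dependence on $X_0$. You instead read $\tau_n$ as the occupation time of $A=\{y:|y|=K-1\}$ (your identities $T_{p+1}-T_p=Z_p$ and $\tau_n=\#\{m<n:|Y_m|\in A\}$ are correct), and you remove the dependence on the initial configuration by the sandwich $W\preccurlyeq X_0\preccurlyeq\Phi_1^{\theta}(X_0)$ between two configurations whose window is $(1,\dots,1)$, together with the zero--one law; both constructions work as described, and this part is a genuine addition compared to the paper, since it is what justifies speaking of a single value attached to $\mu$ when $(Y_n)$ might have several recurrent classes.

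One step needs to be stated more carefully: the ``standard Kac-type identity'' is $\pi(A)=1/\E_{\pi(\cdot\mid A)}(R_A)$ with $R_A$ the return time to $A$; starting from a state of $A$ one has $T_1=1$ and $R_A=T_2-T_1=Z_1$, which gives \eqref{eqn:equationSpeed} with $\E_\pi$ understood as the expectation for the chain started in $A$ according to $\pi(\cdot\mid A)$, equivalently as the a.s. Ces\`aro limit of $(Z_p)$. If instead $\E_\pi(Z_1)$ is read literally as ``start $Y_0\sim\pi$ and take the first freeze after time $0$'', the identity fails in general, because the freeze cycle straddling a stationary start is size-biased: for $\mu$ uniform on $\{1,2,3\}$ and $K=3$ the chain has states $(0,0),(1,0),(2,0),(1,1)$ with $\pi=\tfrac1{47}(5,18,15,9)$, so $\pi(A)=24/47$, while a direct computation gives $\E_\pi(Z_1)=93/47\neq 47/24=1/\pi(A)$. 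The paper is loose at exactly the same point (it writes $\lim_p\frac1p\sum_{j\leq p}Z_j=\E_\pi(Z_1)$ ``by ergodicity''), so this is an imprecision you share with the source rather than a defect of your strategy; phrase the Kac step with the conditioned (Palm) measure, or define $\E_\pi(Z_1)$ as the stationary average over freeze cycles, and your proof is complete.
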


\begin{proof}
Let $X_0 \in S$, we can assume that $B(X_0,K)=1$, up to a deterministic shift. At each time $n$, a ball is added in a bin with a positive index, thus for any $n \in \N$, we have
\[
  \sum_{j=1}^{+\infty} X_n(j) = n + \sum_{j=1}^{+\infty} X_0(j).
\]
Using the notation of Lemma~\ref{lem:linksXY}, we rewrite it $\sum_{j=1}^{\tau_n} Z_j + |Y_n| = n + \sum_{j=1}^{+\infty} X_0(j)$. Moreover, as $0 \leq |Y_n| \leq K$ and $0 \leq \sum_{j=1}^{+\infty} X_0(j) \leq K$, we have
\[
  1-\frac{K}{n}\leq \frac{\sum_{j=1}^{\tau_n} Z_j}{n} \leq 1+\frac{K}{n},
\]
yielding $\lim_{n \to +\infty} \frac{\sum_{j=1}^{\tau_n} Z_j}{n} = 1$ a.s. As $\lim_{p \to +\infty} T_p = +\infty$ a.s., we obtain
\[
  \lim_{p \to +\infty} \frac{\sum_{j=1}^{p} Z_j}{T_p} = 1 \quad \text{a.s.}
\]
Moreover $\lim_{p \to +\infty} \frac{1}{p} \sum_{j=1}^{p} Z_j = \E_\pi(Z_1)$ and $\lim_{p \to +\infty} \frac{T_p}{p} = \E_\pi(T_2-T_1)$ by ergodicity of $(Y_n)$. Consequently, if we set $v_\mu := \frac{1}{\E_\pi(T_2-T_1)} = \frac{1}{\E_\pi(Z_1)},$ the constant $v_\mu$ is well-defined.

We apply Lemma~\ref{lem:linksXY}, we have
\[
  \frac{B(X_n,1)}{n} = \frac{\tau_n}{n} + \frac{B(Y_n,1)}{n} \in \left[ \frac{\tau_n}{n}, \frac{\tau_n}{n} + \frac{K}{n} \right].
\]
Moreover, we have $\lim_{n \to +\infty} \frac{\tau_n}{n} = \lim_{p \to +\infty} \frac{p}{T_p} = v_\mu$ a.s. This yields
\begin{equation}
  \label{eqn:speedTemp}
  \lim_{n \to +\infty} \frac{B(X_n,1)}{n} = v_\mu \quad \text{a.s.}
\end{equation}
Using \eqref{eqn:lips}, this convergence is extended to $\lim_{n \to +\infty} \frac{B(X_n,\xi)}{n} = v_\mu$ a.s.
\end{proof}

\begin{remark}
\label{rem:finitesupportlazy}
If the support of $\mu$ is included in $\left[1,K\right]\cup\left\{+\infty\right\}$, it follows from Lemma~\ref{lem:obvious} that the IBM($\mu$) also has a well-defined speed $v_{\mu}$.
\end{remark}

\subsection{Extension to arbitrary distributions}
\label{subsec:arbitrary}

We now use Proposition~\ref{prop:speedfinite} to prove Theorem~\ref{thm:existsSpeed}.

\begin{proposition}
\label{prop:speedIBM}
Let $\mu$ be probability measure on $\N$ and $(X_n)$ an IBM($\mu$) with initial configuration $X_0\in S$. There exists $v_\mu \in [0,1]$ such that for any $\xi \in \N$, we have $\lim_{n \to +\infty} \frac{B(X_n,\xi)}{n} = v_\mu$ a.s.

Moreover, if $\nu$ is another probability measure we have
\begin{equation}
\label{eqn:encadrementSpeed}
 \forall k \in \N, \nu([1,k]) \leq \mu([1,k]) \Rightarrow v_\nu \leq v_\mu.
\end{equation}
\end{proposition}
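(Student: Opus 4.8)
The idea is to approximate an arbitrary distribution $\mu$ on $\N$ by finitely-supported distributions from below and above, and to transfer the existence of the speed (Proposition~\ref{prop:speedfinite}) plus the monotonicity~\eqref{eqn:encadrementSpeed} along these approximations, using the increasing coupling of Proposition~\ref{prop:increasingCoupling} to control the error. For each $K \in \N$ I would define the ``truncated from above'' measure $\mu^{(K)}$ on $[1,K]\cup\{\infty\}$ by $\mu^{(K)}(\{k\}) = \mu(\{k\})$ for $k < K$ and $\mu^{(K)}(\{\infty\}) = \mu([K,\infty))$ — this puts the excess mass at $\infty$, which by Lemma~\ref{lem:obvious} and Remark~\ref{rem:finitesupportlazy} still has a well-defined speed $v_{\mu^{(K)}} = (1-\mu([K,\infty)))\,v_{\nu^{(K)}}$, where $\nu^{(K)}$ is finitely supported. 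Dually, define the ``truncated from below'' measure $\mu_{(K)}$ by moving the tail mass onto the atom at $K$: $\mu_{(K)}(\{k\}) = \mu(\{k\})$ for $k < K$ and $\mu_{(K)}(\{K\}) = \mu([K,\infty))$. One checks the ordering of cumulative distribution functions: $\mu_{(K)}([1,k]) \geq \mu([1,k]) \geq \mu^{(K)}([1,k])$ for all $k$, and moreover $\mu_{(K)}([1,k]) \downarrow \mu([1,k])$ and $\mu^{(K)}([1,k]) \uparrow \mu([1,k])$ as $K \to \infty$.

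Next I would prove~\eqref{eqn:encadrementSpeed} as a general statement for distributions whose speeds are already known to exist — in particular for finitely-supported ones, where it follows directly: if $\nu([1,k]) \leq \mu([1,k])$ for all $k$, couple the two IBMs via Proposition~\ref{prop:increasingCoupling} starting from the same configuration $X_0$, so that $X_n^\nu \preccurlyeq X_n^\mu$ for all $n$, hence $B(X_n^\nu,1) \leq B(X_n^\mu,1)$, and divide by $n$ and let $n \to \infty$ using the existence of the limits. Applying this with the triple $\mu_{(K')} , \mu_{(K)}$ (for $K' \geq K$) and with $\mu^{(K)}, \mu^{(K')}$ shows that $(v_{\mu_{(K)}})_K$ is non-increasing and $(v_{\mu^{(K)}})_K$ is non-decreasing, and both sequences lie in $[0,1]$; hence both converge, say to $v^-$ and $v^+$ with $v^+ \leq v^-$. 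The crux is then to show $v^+ = v^-$; call this common value $v_\mu$.

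For the squeeze, fix a reference configuration $X_0$ and, for each $K$, realize on one probability space three coupled IBMs — driven by $\mu_{(K)}$, by $\mu$, and by $\mu^{(K)}$ — with $X_n^{\mu^{(K)}} \preccurlyeq X_n^{\mu} \preccurlyeq X_n^{\mu_{(K)}}$ for all $n$ (possible by Proposition~\ref{prop:increasingCoupling} since the CDFs are ordered). This gives $B(X_n^{\mu^{(K)}},1) \leq B(X_n^{\mu},1) \leq B(X_n^{\mu_{(K)}},1)$ for every $n$ and every $K$. Dividing by $n$ and letting $n \to \infty$, the outer quantities converge a.s.\ to $v_{\mu^{(K)}}$ and $v_{\mu_{(K)}}$ respectively, so $\limsup_n B(X_n^{\mu},1)/n \leq v_{\mu_{(K)}}$ and $\liminf_n B(X_n^{\mu},1)/n \geq v_{\mu^{(K)}}$ for every $K$; letting $K \to \infty$ squeezes these to $v^- $ and $v^+$ respectively. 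The remaining — and genuinely substantive — step is therefore to show $v^- = v^+$, i.e.\ that the finite-support approximations converge to the same value from both sides. I expect this to be the main obstacle: it requires an \emph{a priori} estimate that the discrepancy between the $\mu^{(K)}$-IBM and the $\mu_{(K)}$-IBM, run for $n$ steps, is $o(n)$ uniformly, or equivalently a quantitative bound on $v_{\mu_{(K)}} - v_{\mu^{(K)}}$ in terms of $\mu([K,\infty))$. A natural route: by Lemma~\ref{lem:obvious}-type reasoning the $\mu$-IBM agrees with the $\mu^{(K)}$-IBM except on steps where $\xi_n \geq K$ (a Bernoulli$(\mu([K,\infty)))$ fraction of all steps), and on each such step the front can advance by at most the shift $\tau$ relative to not moving, i.e.\ by at most $1$ extra bin by~\eqref{eqn:PhiMonotone2}; summing, the cumulative discrepancy after $n$ steps is stochastically dominated by a Binomial$(n,\mu([K,\infty)))$, which is $\leq 2n\,\mu([K,\infty))$ eventually a.s.\ by the law of large numbers. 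Hence $v_{\mu_{(K)}} - v_{\mu^{(K)}} \leq C\,\mu([K,\infty)) \to 0$, forcing $v^- = v^+ =: v_\mu$ and completing the proof of the a.s.\ convergence of $B(X_n,1)/n$; the extension to general $\xi$ is then immediate from~\eqref{eqn:lips} exactly as in Proposition~\ref{prop:speedfinite}. Finally, \eqref{eqn:encadrementSpeed} for arbitrary $\mu,\nu$ follows by the same coupling argument now that both speeds are known to exist: $X_n^\nu \preccurlyeq X_n^\mu$ gives $B(X_n^\nu,1) \leq B(X_n^\mu,1)$, divide by $n$, pass to the limit.
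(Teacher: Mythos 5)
Your proposal is correct and takes essentially the same route as the paper: truncate $\mu$ at level $K$ (a lazy version from below, a faster comparison from above), sandwich the IBM($\mu$) via the increasing coupling of Proposition~\ref{prop:increasingCoupling}, bound the discrepancy by the number of draws $\geq K$ (at most one bin each, hence of order $n\,\mu([K,+\infty))$ by the law of large numbers), and let $K\to+\infty$. The only point to tighten is your ``summing'' of the one-bin-per-big-step bound: since the configurations differ after a disagreement step, one should maintain by induction the invariant $X_n \preccurlyeq \tau^{m_n}\big(\underline{X}^K_n\big)$, with $m_n$ the number of big steps so far, using \eqref{eqn:PhiMonotone2} together with the commutation \eqref{eqn:commutates} --- which is exactly what the paper's auxiliary process $\bar{X}^K$ (big steps replaced by the shift $\tau$) accomplishes.
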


\begin{proof}
Let $X_0 \in S$. We write $(\xi_n, n \geq 1)$ for an i.i.d. sequence of random variables of law $\mu$. For any $n,K \geq 1$, we set $\xi^K_n = \xi_n \ind{\xi_n \leq K} + \infty\ind{\xi_n > K}$. We then define the processes $(\underline{X}^K_n)$ and $(\bar{X}^K_n)$ by $\underline{X}^K_0 = \bar{X}^K_0 = X_0$ and
\[
  \underline{X}^K_{n+1} = \Phi_{\xi^K_{n+1}}(\underline{X}^K_n) \quad \text{and} \quad \bar{X}^K_{n+1} = \begin{cases}
   \Phi_{\xi_{n+1}}(\bar{X}^K_n) & \text{if } \xi_{n+1} \leq K\\
   \tau(\bar{X}^K_n) & \mathrm{otherwise.} 
  \end{cases}
\]
By induction, we have $\underline{X}^K_n \preccurlyeq X_n \preccurlyeq \bar{X}^K_n$ for any $n \geq 0$, using \eqref{eqn:PhiMonotone} and \eqref{eqn:PhiMonotone2}.

As $(\underline{X}^K_n)$ is an infinite-bin model with support included in $\left[1,K\right]\cup\left\{+\infty\right\}$, by Remark~\ref{rem:finitesupportlazy}, there exists $v_K \in [0,1]$ such that for any $\xi \in \N$
\[
  \liminf_{n \to +\infty} \frac{B(X_n,\xi)}{n} \geq \lim_{n \to +\infty} \frac{B(\underline{X}^K_n,\xi)}{n} = v_K \quad \text{a.s.}
\]
Moreover, by definition of $(\bar{X}^K_n)$ and \eqref{eqn:commutates}, for any $\xi,n \geq 1$ we have
\[
  B(\bar{X}^K_n,\xi) = B(\underline{X}^K_n,\xi) + \sum_{j=1}^n \ind{K< \xi_j < +\infty},
\]
therefore, by the law of large numbers
\[
  \limsup_{n \to +\infty} \frac{B(X_n,\xi)}{n} \leq \lim_{n \to +\infty} \frac{B(\bar{X}^K_n,\xi)}{n} = v_K + \mu([K+1,+\infty)) \quad \text{a.s.}
\]

By Proposition~\ref{prop:increasingCoupling}, we observe immediately that $(v_K)$ is an increasing sequence, bounded by 1, thus converges. Moreover, $\lim_{K \to +\infty} \mu([K+1,+\infty)) = 0$. We conclude that $\displaystyle \lim_{n \to +\infty} \tfrac{1}{n}B(X_n,\xi) = \lim_{K \to +\infty} v_K =:v_\mu$ a.s. By Proposition~\ref{prop:increasingCoupling}, \eqref{eqn:encadrementSpeed} trivially holds.
\end{proof}

\begin{remark}
\label{rem:encadrementSpeed}
Let $\mu$ be a probability measure on $\N$, we set $\mu_K = \mu(.|. \leq K)$. We observe from the proof of Proposition~\ref{prop:speedIBM} and Lemma~\ref{lem:obvious} that\[
  \mu([1,K]) v_{\mu_K} \leq v_\mu \leq \mu([1,K]) v_{\mu_K} + \mu([K+1,+\infty)).
\]
As $v_{\mu_K}$ is the speed of an IBM with support bounded by $K$, it can be computed explicitly using \eqref{eqn:equationSpeed}. This provides tractable bounds for $v_\mu$. For example, we have $v_\mu \geq \frac{\mu(\left\{K_0\right\})}{K_0}$, where $K_0 = \inf\{ k > 0 : \mu(k)>0\}$.
\end{remark}

\begin{remark}
\label{rem:generalized}
Proposition~\ref{prop:speedIBM} can be extended to infinite-bin models starting with a configuration $X \in S^0$. Let $\mu$ be a probability measure and $(X_n)$ an IBM($\mu$) starting with a configuration $X \in S^0$. If $\mu$ has a support bounded by $K$, then the projection $(\Pi_K(X_n))$ is a Markov chain that will hit the set $S_K$ in finite time. Therefore, we can apply Proposition~\ref{prop:speedfinite} and we have $\lim_{n \to +\infty} \frac{1}{n}B(X_n,1) = v_\mu$ a.s.

If $\mu$ has unbounded support, the IBM($\mu$) can still be bounded, in the same way as in the proof of Proposition~\ref{prop:speedIBM}, by infinite-bin models with bounded support. As a consequence, Theorem~\ref{thm:existsSpeed} holds for any starting configuration belonging to $S^0$.
\end{remark}

\section{A formula for the speed of the infinite-bin model}
\label{sec:seriesFormula}

In this section, we prove that we can write $v_\mu$ as the sum of a series, provided that this series converges. A non-rigorous heuristic for the proof goes along the following lines. Let $\eta>0$ and $\mu$ be a probability measure such that $\mu(\{1\}) \geq 1-\eta$, and $(\xi_n: n \in \N)$ be i.i.d. random variables with law $\mu$. If $\eta$ is small enough then the sequence $(\xi_n)$ consists in long time intervals such that $\xi_n = 1$ on these intervals, separated by short patterns that appear at random. Every move of type 1 makes the front of the infinite-bin model increase by 1, and each pattern induces a delay. Therefore, we expect the value of $v_{\mu}$ to be close to $1$ minus the sum over every possible pattern of the delay caused by this pattern to the process multiplied by its probability of occurrence.

This sum is an infinite sum and we hope that for $\eta$ small enough, the contributions of the long patterns will decay fast enough so that the series converges and its sum is equal to $v_\mu$. It appears that in fact, this series often converges, even when $\mu(1)$ is not close to 1, and when it converges its sum is equal to $v_\mu$.

We recall some notation from the introduction. We denote by $\calA$ the set of finite words on the alphabet $\N$. For any $\alpha = (\alpha_1,\ldots,\alpha_n) \in \calA$, we define
$L(\alpha) :=n$ to be the length of $\alpha$. 

Let $\mu$ be a probability distribution on $\N$ and $(\xi_j)_{j\geq1}$ be i.i.d. random variables with law $\mu$. We write
\[
  W_\mu(\alpha) := \prod_{j=1}^{L(\alpha)} \mu(\{\alpha_j\}) = \P((\xi_1,\ldots \xi_{L(\alpha)}) = \alpha)
\]
for the weight of the word $\alpha$.

If $\alpha=(\alpha_1,\ldots,\alpha_n)$ is a non-empty word, we denote by $\pi \alpha$ (respectively $\varpi \alpha$) the word $(\alpha_1,\ldots \alpha_{n-1})$ (resp. $(\alpha_2,\ldots \alpha_{n})$) obtained by erasing the last (resp. first) letter of $\alpha$. We use the convention $\pi \emptyset = \varpi \emptyset = \emptyset$.

Given any $X \in S$, we define the function $\epsilon_X : \mathcal{A} \to \{-1,0,1\}$ by
\[
  \epsilon_X(\alpha)=\ind{\alpha \in \mathcal{P}_X}-\ind{\varpi \alpha \in \mathcal{P}_X},
\]
where $\mathcal{P}_X$ is the set of non-empty words $\beta$ such that, starting from $X$ and applying successively the moves $\Phi_{\beta_1},\ldots,\Phi_{\beta_{L(\beta)}}$, the last move $\Phi_{\beta_{L(\beta)}}$ results in placing a ball in a previously empty bin.

For $X \in S$ and $\alpha \in \calA$, we denote by $X^{\alpha}$ the configuration of the infinite-bin model obtained after applying successively moves of type $\alpha_1, \alpha_2, \ldots \alpha_n$ to the initial configuration $X$, i.e.
\[
  X^\alpha = \Phi_{\alpha_{L(\alpha)}} \big( \Phi_{\alpha_{L(\alpha)-1}} \big( \cdots \Phi_{\alpha_2} \big( \Phi_{\alpha_1} \big( X\big) \big) \cdots \big) \big),
\]
and we set $d_X(\alpha) = B(X^\alpha,1) - B(X,1)$ the displacement of the front of the infinite-bin model after performing the sequence of moves in $\alpha$. Using this definition, we obtain an alternative expression for $\epsilon_X(\alpha)$.

\begin{lemma}
\label{lem:alternativeDefinition}
For any $\alpha \in \mathcal{A}$, we have
\begin{equation}
  \label{eqn:defEpsilon}
  \epsilon_X(\alpha) = d_X(\alpha) - d_X(\pi \alpha) - d_X(\varpi \alpha) + d_X( \pi \varpi \alpha).
\end{equation}
\end{lemma}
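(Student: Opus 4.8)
The plan is to unwind both sides of \eqref{eqn:defEpsilon} in terms of the indicator functions $\ind{\beta \in \calP_X}$ and match them word by word. The key observation is a reformulation of $d_X(\alpha)$ in terms of $\calP_X$. I claim that for any $\alpha \in \calA$,
\[
  d_X(\alpha) = \sum_{\beta} \ind{\beta \in \calP_X},
\]
where $\beta$ ranges over the nonempty prefixes of $\alpha$, i.e. the words $(\alpha_1,\ldots,\alpha_j)$ for $1 \le j \le L(\alpha)$. Indeed, the displacement $d_X(\alpha) = B(X^\alpha,1) - B(X,1)$ counts exactly the number of times, over the course of performing the moves $\Phi_{\alpha_1},\ldots,\Phi_{\alpha_{L(\alpha)}}$ in order, that a ball is placed in a bin that was previously empty (each such event pushes the front right by exactly one step, via \eqref{eqn:lips} or directly from the definition of $\Phi_\xi$, and no event pushes it by more than one). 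The $j$-th move places a ball in a previously empty bin precisely when the prefix $(\alpha_1,\ldots,\alpha_j)$ lies in $\calP_X$, by the very definition of $\calP_X$. This gives the displayed formula.

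With this in hand, I would compute the right-hand side of \eqref{eqn:defEpsilon}. Write $\alpha = (\alpha_1,\ldots,\alpha_n)$. The prefixes of $\alpha$ are $(\alpha_1,\ldots,\alpha_j)$ for $1 \le j \le n$; the prefixes of $\pi\alpha = (\alpha_1,\ldots,\alpha_{n-1})$ are those with $1 \le j \le n-1$; so $d_X(\alpha) - d_X(\pi\alpha) = \ind{\alpha \in \calP_X}$. Similarly, the prefixes of $\varpi\alpha = (\alpha_2,\ldots,\alpha_n)$ are $(\alpha_2,\ldots,\alpha_j)$ for $2 \le j \le n$, and those of $\pi\varpi\alpha = (\alpha_2,\ldots,\alpha_{n-1})$ are the same with $2 \le j \le n-1$; hence $d_X(\varpi\alpha) - d_X(\pi\varpi\alpha) = \ind{\varpi\alpha \in \calP_X}$. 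Subtracting yields $d_X(\alpha) - d_X(\pi\alpha) - d_X(\varpi\alpha) + d_X(\pi\varpi\alpha) = \ind{\alpha \in \calP_X} - \ind{\varpi\alpha \in \calP_X} = \epsilon_X(\alpha)$, which is the claim. I would also check the degenerate cases $\alpha = \emptyset$ (all four terms vanish, and $\epsilon_X(\emptyset) = 0$ since $\emptyset \notin \calP_X$ and $\varpi\emptyset = \emptyset \notin \calP_X$) and $L(\alpha) = 1$ (then $\pi\alpha = \varpi\alpha = \pi\varpi\alpha = \emptyset$, and the identity reduces to $d_X(\alpha) = \ind{\alpha \in \calP_X}$, consistent with the prefix formula since $\alpha$ has a single prefix, itself).

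The main obstacle is justifying the prefix formula for $d_X$ cleanly — specifically, the two facts that (i) the front moves by at most one at each step and (ii) it moves by exactly one if and only if the current prefix is in $\calP_X$. Fact (ii) is essentially the definition of $\calP_X$ once one notes that "placing a ball in a previously empty bin" is the same as "moving the front"; here one uses that $\Phi_\xi$ adds a ball at position $B(X,\xi)$, that $B(X,\xi) \le B(X,1) = m+1$ where $m$ is the front, and that the bin at position $m+1$ is the unique empty bin adjacent to a nonempty one — so a ball lands in a previously empty bin exactly when it lands at $m+1$, which is exactly when the front advances. Fact (i) then follows since the front can only ever be advanced to $m+1$. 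I would phrase this as a short lemma or inline remark before the main computation. The rest is the bookkeeping above, which is routine.
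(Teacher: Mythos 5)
Your proof is correct and follows essentially the same route as the paper: the core fact in both is that $d_X(\beta)-d_X(\pi\beta)=\ind{\beta\in\calP_X}$, i.e.\ the last move advances the front by exactly one precisely when it places a ball in a previously empty bin, applied to $\beta=\alpha$ and $\beta=\varpi\alpha$. Your prefix-sum formula for $d_X$ is just the telescoped form of this observation, so it is a harmless (and correctly justified) repackaging rather than a different argument.
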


\begin{proof}
Observe that $d_X(\alpha) - d_X(\pi \alpha)$ equals $0$ (resp. $1$) if the last move of $\alpha$ adds a ball in a previously non-empty (resp. empty) bin. Therefore we have $d_X(\alpha) - d_X(\pi \alpha)=\ind{\alpha \in \mathcal{P}_X}$. Similarly, $d_X(\varpi \alpha) - d_X( \pi \varpi \alpha)=\ind{\varpi \alpha \in \mathcal{P}_X}$. We conclude that
\[
\epsilon_X(\alpha) = \ind{\alpha \in \mathcal{P}_X}-\ind{\varpi \alpha \in \mathcal{P}_X}=d_X(\alpha) - d_X(\pi \alpha) - d_X(\varpi \alpha) + d_X( \pi \varpi \alpha).\text{\qedhere}
\]\end{proof}

As a direct consequence of Lemma~\ref{lem:alternativeDefinition}, for any $\alpha = (\alpha_1, \ldots \alpha_n)$ we have
\begin{equation}
  \label{eqn:inverseEpsilon}
  d_X(\alpha) = \sum_{k=1}^n \sum_{j=1}^{n-k+1} \epsilon_X((\alpha_k,\alpha_{k+1},\ldots,\alpha_{k+j-1})),
\end{equation}
i.e., the displacement induced by $\alpha$ is the sum of $\epsilon(\beta)$ for any consecutive subword $\beta$ of $\alpha$ (where the subwords $\beta$ are counted with multiplicity).

\begin{remark}
One could also go the other way round, start with $d_X$ and define $\epsilon_X$ to be the function verifying
\[
\forall\alpha\in\mathcal{A},d_X(\alpha)=\sum_{\beta\prec\alpha}{\epsilon_X(\beta)m(\beta,\alpha)},
\]
where $\beta\prec\alpha$ denotes the fact that $\beta$ is a factor of $\alpha$ (i.e. a consecutive subword of $\alpha$) and $m(\beta,\alpha)$ denotes the number of times $\beta$ appears as a factor of $\alpha$. In that case, one would obtain formula~\eqref{eqn:defEpsilon} for $\epsilon_X$ as the result of a M\H{o}bius inversion formula (see~\cite[Sections 3.6 and 3.7]{St} for details on incidence algebras and M\H{o}bius inversion formulas).
\end{remark}

Using these notation and results, we prove the following lemma.
\begin{lemma}
\label{lem:seriesformula}
For any probability measure $\mu$ and $X \in S$, we have
\[
  v_\mu = \lim_{n \to +\infty} \frac{1}{n} \sum_{k=1}^n \sum_{\alpha \in \mathcal{A} : L(\alpha) \leq k} \epsilon_X(\alpha) W_\mu(\alpha).
\]
\end{lemma}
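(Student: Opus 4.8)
The plan is to express the displacement of the front after $n$ moves, $B(X_n,1)-B(X_0,1)$, as a sum of $\epsilon_X$-values over factors of the random word $(\xi_1,\ldots,\xi_n)$, take expectations, and then invoke Proposition~\ref{prop:speedIBM} together with a Ces\`aro-averaging argument. More precisely, the key identity is \eqref{eqn:inverseEpsilon} applied to the random word $\alpha^{(n)} := (\xi_1,\ldots,\xi_n)$: it gives
\[
  d_X(\alpha^{(n)}) = \sum_{1 \leq k \leq \ell \leq n} \epsilon_X\big((\xi_k,\xi_{k+1},\ldots,\xi_\ell)\big),
\]
so that $B(X_n,1)-B(X_0,1) = d_X(\alpha^{(n)})$ equals this double sum over all consecutive factors. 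Taking expectations and using that $(\xi_k,\ldots,\xi_\ell)$ has the same law as a word of length $\ell-k+1$, each factor of a given length $j$ contributes $\sum_{\alpha : L(\alpha)=j} \epsilon_X(\alpha) W_\mu(\alpha)$, and there are $n-j+1$ factors of length $j$. Hence
\[
  \E\big[B(X_n,1)\big] - B(X_0,1) = \sum_{j=1}^n (n-j+1) \sum_{\alpha \in \mathcal{A}:\, L(\alpha)=j} \epsilon_X(\alpha) W_\mu(\alpha).
\]
A short computation rewrites $\sum_{j=1}^n (n-j+1) c_j = \sum_{k=1}^n \sum_{j=1}^k c_j = \sum_{k=1}^n \sum_{\alpha:\,L(\alpha)\leq k} \epsilon_X(\alpha)W_\mu(\alpha)$, which is exactly $n$ times the Ces\`aro average in the statement.

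The second ingredient is that $\tfrac1n \E[B(X_n,1)] \to v_\mu$. By Proposition~\ref{prop:speedIBM}, $\tfrac1n B(X_n,1) \to v_\mu$ almost surely, and one promotes this to convergence in expectation (i.e.\ of $\tfrac1n\E[B(X_n,1)]$) via a uniform integrability / domination argument: since at each step the front moves by at most $1$, $0 \leq B(X_n,1)-B(X_0,1) \leq n$, so $\big|\tfrac1n B(X_n,1)\big|$ is bounded by a constant (plus $\tfrac1n|B(X_0,1)|$), and bounded convergence applies. Combining the two ingredients yields
\[
  v_\mu = \lim_{n\to+\infty} \frac1n\Big( B(X_0,1) + \sum_{k=1}^n \sum_{\alpha \in \mathcal{A}:\, L(\alpha)\leq k} \epsilon_X(\alpha) W_\mu(\alpha)\Big) = \lim_{n\to+\infty}\frac1n \sum_{k=1}^n \sum_{\alpha \in \mathcal{A}:\, L(\alpha)\leq k}\epsilon_X(\alpha)W_\mu(\alpha),
\]
since the $\tfrac1n B(X_0,1)$ term vanishes.

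The delicate point is the legitimacy of taking the expectation term-by-term in the double sum $\sum_{1\le k\le \ell\le n}\epsilon_X((\xi_k,\ldots,\xi_\ell))$: this is a finite sum for each fixed $n$, so interchanging $\E$ with the (finite) sum is harmless — the genuine subtlety is only that $\sum_{\alpha:L(\alpha)=j}\epsilon_X(\alpha)W_\mu(\alpha)$ must be a convergent series, which holds because $\sum_{\alpha:L(\alpha)=j}|\epsilon_X(\alpha)|W_\mu(\alpha) \le \sum_{\alpha:L(\alpha)=j} W_\mu(\alpha) = 1$ as $|\epsilon_X|\le 1$ and $W_\mu$ is a probability on length-$j$ words. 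Thus no convergence hypothesis on the full double series is needed here; the Ces\`aro formulation is precisely what makes the statement unconditional. The main obstacle, then, is simply bookkeeping the combinatorial rearrangement $\sum_j (n-j+1)c_j = \sum_k\sum_{j\le k}c_j$ correctly and justifying the passage from a.s.\ convergence of $\tfrac1n B(X_n,1)$ to convergence of its mean, both of which are routine given the results already established.
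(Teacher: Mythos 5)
Your proposal is correct and follows essentially the same route as the paper: it applies \eqref{eqn:inverseEpsilon} to the random word $(\xi_1,\ldots,\xi_n)$, takes expectations factor by factor, performs the same combinatorial rearrangement into the Ces\`aro sum, and upgrades the a.s.\ convergence of $\tfrac{1}{n}B(X_n,1)$ from Theorem~\ref{thm:existsSpeed} to convergence in mean by bounded (dominated) convergence, exactly as in the paper's proof.
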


This lemma straightforwardly implies Theorem~\ref{thm:seriesformula} by Stolz-Ces\`aro theorem.

\begin{proof}
Let $(X_n)$ be an IBM($\mu$) starting from the configuration $X \in S$. We have, by definition of $d_X$, $d_X((\xi_1,\ldots \xi_n)) = B(X_n,1) - B(X_0,1)$. Moreover, by Theorem~\ref{thm:existsSpeed} and dominated convergence,
\[
  \lim_{n \to +\infty} \frac{1}{n} \E\left(  d_X((\xi_1,\ldots \xi_n)) \right) = v_\mu.
\]
We easily compute $\E\left(  d_X((\xi_1,\ldots \xi_n)) \right)$ using \eqref{eqn:inverseEpsilon}, we obtain
\begin{align*}
  \E\left(  d_X((\xi_1,\ldots \xi_n)) \right) &= \sum_{k=1}^n \sum_{j=1}^{n-k+1} \E\left(\epsilon_X((\xi_k,\xi_{k+1},\ldots,\xi_{k+j-1}))\right)\\
  &= \sum_{k=1}^n \sum_{j=1}^{n-k+1} \sum_{\alpha \in \mathcal{A} : L(\alpha) = j} W_\mu(\alpha) \epsilon_X(\alpha)\\
  &= \sum_{k=1}^n \sum_{\alpha \in \mathcal{A}:L(\alpha) \leq k}  W_\mu(\alpha) \epsilon_X(\alpha),
\end{align*}
which concludes the proof.
\end{proof}

In Section~\ref{sec:powerseries}, we study in more details the function $\epsilon_X$. In particular, we give sufficient conditions on $\alpha$ to have $\epsilon_X(\alpha)=0$, which allows to prove that in some cases, the series $\sum_{\alpha \in \mathcal{A}} \epsilon_X(\alpha) W_\mu(\alpha)$ is absolutely convergent.

\section{Length of the longest path in Barak-Erd\texorpdfstring{\H{o}}{o}s graphs}
\label{sec:geo}

In the rest of the article, we use the results obtained in the previous sections to study the asymptotic behaviour of the length of the longest path in a Barak-Erd\H{o}s graph. Let $p \in [0,1]$, we write $\mu_p$ for the geometric distribution on $\N$ with parameter $p$, verifying $\mu_p(k) = p(1-p)^{k-1}$ for any $k \geq 1$. In this section, we present a coupling introduced by Foss and Konstantopoulos~\cite{FK} between an IBM($\mu_p$) and a Barak-Erd\H{o}s graph of size $n$, used to compute the asymptotic behaviour of the length of the longest path in this graph.

Recall that a Barak-Erd\H{o}s graph on the $n$ vertices $\{1,\ldots, n\}$ with edge probability $p$ is constructed by adding an edge from $i$ to $j$ with probability $p$, independently for each pair $1\leq i<j\leq n$. We write $L_n(p)$ for the length of the longest path in this graph. Newman~\cite{N} proved that $L_n$ increases at linear speed. More precisely, there exists a function $C$ such that for any $p \in [0,1]$,
\[
  \lim_{n \to +\infty} \frac{L_n(p)}{n} = C(p) \quad \text{in probability.}
\]
Moreover, he proved that $C(p)$ is continuous and increasing on $[0,1]$, and that $C'(0)=e$.

Let $p \in (0,1)$ and $(X_n)$ be an IBM($\mu_p$), we set $v_p = v_{\mu_p}$ the speed of $(X_n)$, which is well-defined by Proposition~\ref{prop:speedIBM}. Foss and Konstantopoulos~\cite{FK} observed, through a coupling between this IBM and the Barak-Erd\H{o}s graph, that
\begin{equation}
\label{eq:coupling}
  C(p) = v_p = \lim_{n \to +\infty} \frac{B(X_n,1)}{n} \quad \text{a.s.}
\end{equation}

We now construct the coupling used to derive~\ref{eq:coupling}. We associate an infinite-bin model configuration in $S$ to each acyclic directed graph on vertices $\{1, \ldots, n\}$ as follows: for each vertex $1 \leq i \leq n$, we add a ball in the bin indexed by the length of the longest path ending at vertex $i$, and infinitely many balls in bins with negative index (see Figure~\ref{fig:BEtoIBM} for an example). We denote by $\ell_i$ the length of the longest path ending at position $i$.

\begin{figure}[htbp]
\centering
\subfloat[An acyclic directed graph $G$.]{\includegraphics[width=5.5cm]{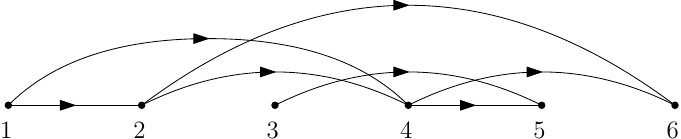}}
\hspace{1cm}\subfloat[The infinite-bin model configuration corresponding to this graph.]{\includegraphics[width=4.6cm]{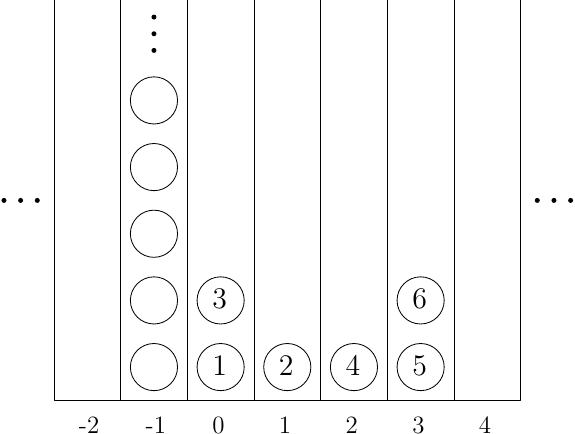}}
\caption{From a Barak-Erd\H{o}s graph to an infinite-bin model configuration.}
\label{fig:BEtoIBM}
\end{figure}

We now construct the Barak-Erd\H{o}s graph as a dynamical process, which is run in parallel with its associated infinite-bin model. At time $n=0$, we start with the Barak-Erd\H{o}s graph with no vertex, the empty graph, and the infinite-bin model with infinitely many balls in bins of negative index, and no ball in other bins (which is called configuration $Y_0$). At time $n=1$, we add vertex $1$ to the Barak-Erd\H{o}s graph. As $\ell_1=0$, we also add a ball in the bin of index $0$ to the configuration $Y_0$, to obtain the configuration $Y_1$.

At time $n>1$, we add vertex $n$ to the Barak-Erd\H{o}s graph on $\{1,\ldots, n-1\}$. We compute the law of $\ell_n$ conditionally on $(\ell_i, i \leq n-1)$. Let $\sigma$ be a permutation of $\{1,\ldots,n-1\}$ such that $\ell_{\sigma(1)} \geq \ell_{\sigma(2)} \geq \cdots \geq \ell_{\sigma(n-1)}$. The permutation is not necessarily uniquely defined by these inequalities, but this does not matter for our purpose. For each $1 \leq i \leq n-1$, there is an edge between $n$ and $\sigma(i)$ with probability $p$, independently of any other edge. In this case, there is a path of length $\ell_i + 1$ in the Barak-Erd\H{o}s graph that end at site $n$. The smallest number $\xi_n$ such that there is an edge between $\sigma(\xi_n)$ and $n$ is distributed as a geometric random variable, where if $\xi_n >n-1$, then there is no edge between $n$ and a previous vertex, thus $\ell_n = 0$ and we add a ball at position $0$. As a consequence, the state associated to the graph of size $n$ is given by $Y_n = \Phi_{\xi_n}(Y_{n-1})$.

We have coupled the IBM($\mu_p$) $(Y_n)$ with a growing sequence of Barak-Erd\H{o}s graphs, in such a way that for any $n \in \N$, the length of the longest path in the Barak-Erd\H{o}s graph of size $n$ is given by $B(Y_n,1)$. Therefore, \eqref{eq:coupling} is a direct consequence of Proposition~\ref{prop:speedIBM}.

We now use \eqref{eqn:encadrementSpeed} to bound the function $C$. We recall from the introduction that in \cite{FK}, Foss and Konstantopoulos obtained upper and lower bounds for $C(p)$, that are tight enough for $p$ close to $1$ to give the first five terms of the Taylor expansion of $C$ around $p=1$ (see \eqref{eqn:taylorFK}). We use measures with finite support to approach $\mu_p$, as in the proof of Proposition~\ref{prop:speedIBM}. We obtain two sequences of functions that converge exponentially fast toward $C$ on $[\epsilon,1]$ for any $\epsilon>0$. Let $k \geq 1$, we set
\[
  \underline{\mu}_p^k(\{j\}) = p(1-p)^{j-1}\ind{j \leq k} \,\, \text{and} \,\, \bar{\mu}_p^k(\{j\}) = p(1-p)^{j-1} \ind{j \leq k} + (1-p)^k\ind{j=k}.
\]
We write $\underline{C}_k(p) = v_{\underline{\mu}_p^k}$ and $\bar{C}_k(p) = v_{\bar{\mu}_p^k}$. By \eqref{eqn:encadrementSpeed}, for any $k \geq 1$ we have $\underline{C}_k(p) \leq C(p) \leq \bar{C}_k(p)$. Moreover, as a (very crude) upper bound, for any $p \in [0,1]$ we have
\begin{equation}
  \label{ean:cvExp}
  0 \leq \bar{C}_k(p) - \underline{C}_k(p) \leq (1-p)^k,
\end{equation}
see Remark~\ref{rem:encadrementSpeed}. Hence $\bar{C}_k - \underline{C}_k$ converges uniformly to the zero function at an exponential rate on any interval of the form $[\epsilon,1]$, with $\epsilon > 0$. Moreover, note that $\bar{C}_k(0) = 1/k$ and $\underline{C}_k(0) = 0$. Since the sequence $(\bar{C}_k - \underline{C}_k)_k$ is decreasing, by Dini's theorem it converges uniformly on $[0,1]$ to the zero function.

Using Proposition~\ref{prop:speedfinite}, the functions $\underline{C}_k$ and $\bar{C}_k$ can be explicitly computed. For example, taking $k=3$ we obtain
\[ \tfrac{p \left(p^2-3 p+3\right)^2 \left(p^4-6 p^3+14 p^2-16 p+8\right)}{3 p^6-26 p^5+96 p^4-196 p^3+235 p^2-158 p+47}
  \leq C(p)\leq \tfrac{p^3-2 p^2+p-1}{p^5-4 p^4+8 p^3-9 p^2+6 p-3}.
\]
For any $k \in \N$, $\underline{C}_k$ and $\bar{C}_k$ are rational functions of $p$. Their convergence toward $C$ is very fast, which enables to bound values of $C(p)$. For instance, taking $k=9$, we obtain $C(0.5) = 0.5780338 \pm 2.10^{-8}$, improving $C(0.5) = 0.58\pm 10^{-2}$ given by the bounds in~\cite{FK}.

\begin{figure}[htbp]
\centering
\subfloat[$\underline{C}_3$ and $\bar{C}_3$.]{\includegraphics[width=4cm]{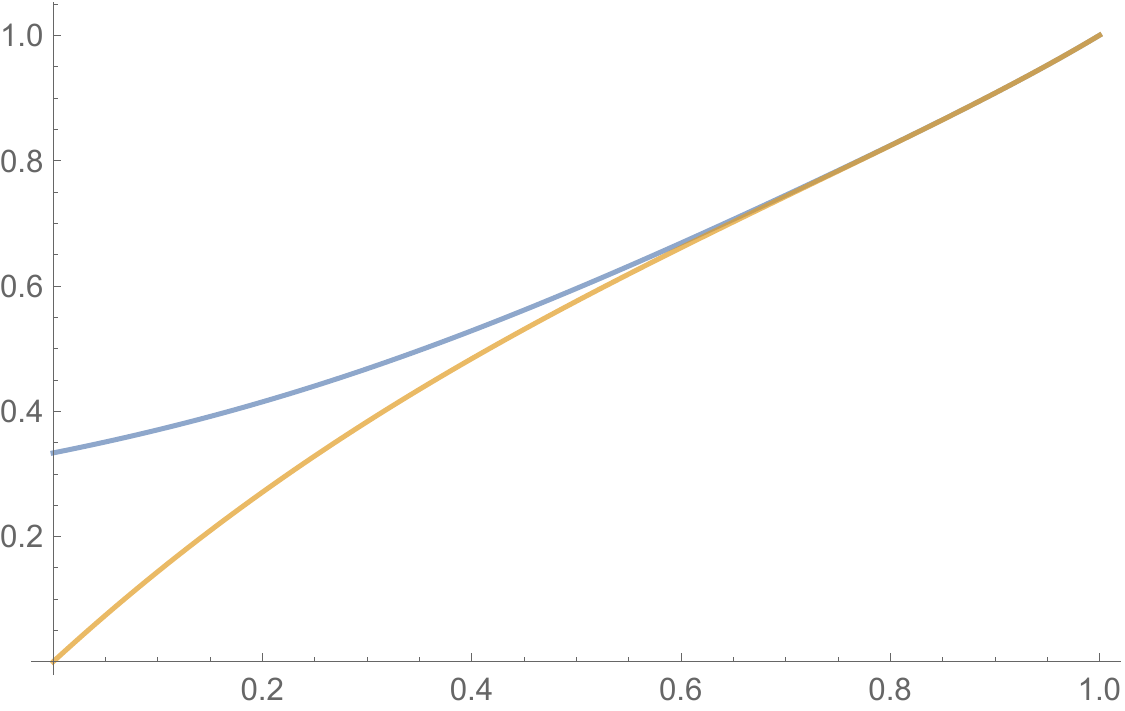}}
\hspace{\stretch{1}}\subfloat[$\underline{C}_6$ and $\bar{C}_6$.]{\includegraphics[width=4cm]{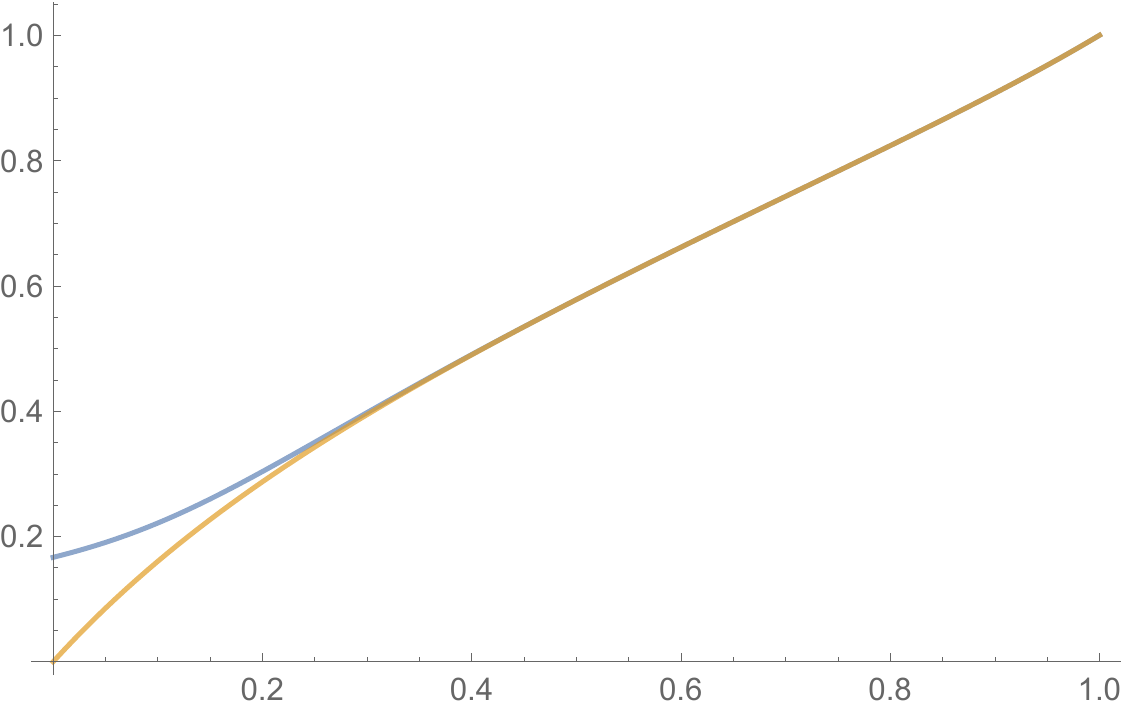}}
\hspace{\stretch{1}}\subfloat[$\underline{C}_9$ and $\bar{C}_9$.]{\includegraphics[width=4cm]{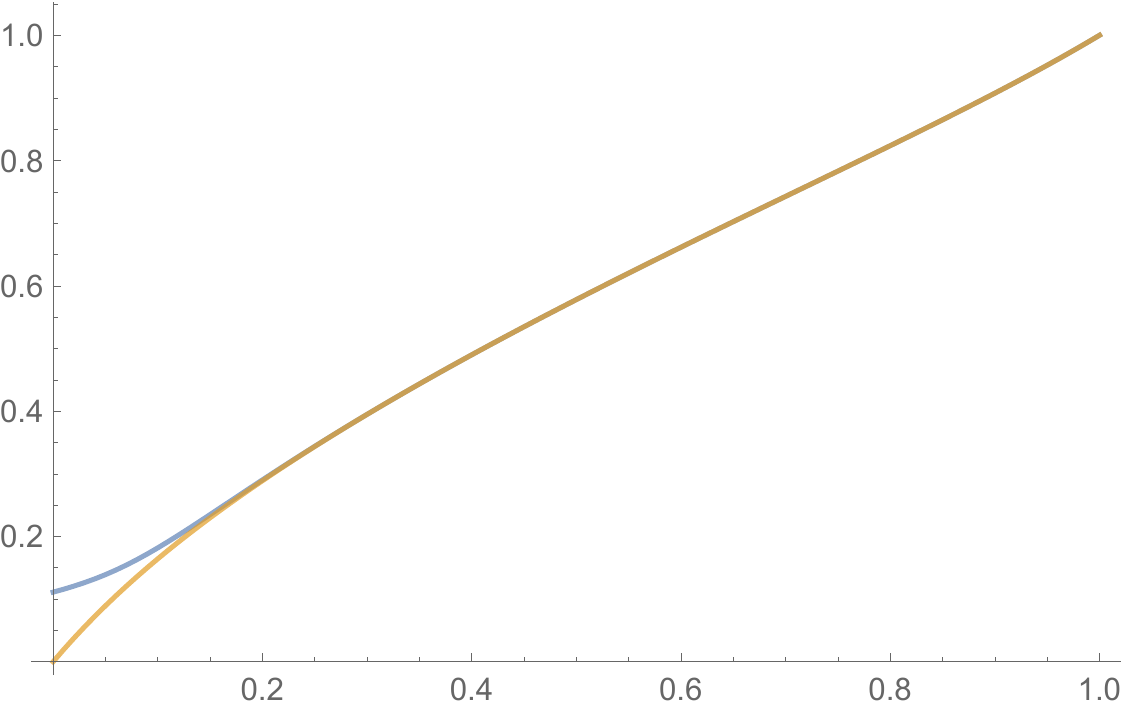}}
\caption{Lower and upper bounds $\underline{C}_k$ and $\bar{C}_k$ for $C$, for $k\in\left\{3,6,9\right\}$.}
\label{fig:geometricspeed}
\end{figure}

The functions $\underline{C}_k$ and $\bar{C}_k$ are very close for $p$ close to 1, which enables to compute the Taylor expansion of $C(1-q)$ to any order as $q \to 0$. For example, comparing the Taylor expansion of $\underline{C}_6$ and $\bar{C}_6$, we obtain the first 14 terms of the Taylor expansion of $C$. However, Theorem~\ref{thm:powerseries} gives another way to obtain this Taylor expansion.

\section{Power series expansion of \texorpdfstring{$C$}{\textit{C}} in dense graphs}
\label{sec:powerseries}

In this section, we prove that $C$ is analytic for $p>1/2$. Recall that for any word $\alpha \in \mathcal{A}$, we defined the height of $\alpha$ to be
\[
H(\alpha)=\sum_{i=1}^{L(\alpha)}{\alpha_i}-L(\alpha).
\]
For $p \in [0,1]$ we set
\[
  W_p(\alpha):= W_{\mu_p}(\alpha) = p^{L(\alpha)} (1-p)^{H(\alpha)}.
\]
By Theorem~\ref{thm:seriesformula}, if $\sum_{\alpha \in \mathcal{A}} |\epsilon_X(\alpha)| W_p(\alpha)<+\infty$, then we have
\begin{equation}
  \label{eqn:defineSeries}
  C(p) = \sum_{\alpha \in \mathcal{A}} \epsilon_X(\alpha) p^{L(\alpha)} (1-p)^{H(\alpha)}.
\end{equation}
We first prove that this series is absolutely convergent. To do so, we obtain sufficient conditions on $\alpha$ to have $\epsilon_X(\alpha)=0$. We say that a word $\alpha=(\alpha_1,\ldots,\alpha_l)$ has a renovation event at position $n\geq1$ if for all $0\leq k\leq l-n$, $\alpha_{n+k}\leq k+1$. This concept appeared first in~\cite{Bor}, then in~\cite{FK} where these events are used to create time intervals on which the process starts over and is independent of its past. We first show that the existence of a renovation event in $\alpha$ implies $\epsilon_X(\alpha)=0$.

\begin{lemma}
\label{lem:renovation}
Let $X \in S$, if $\alpha\in \calA$ with $L(\alpha) \geq 2$ has a renovation event at position $n\geq2$, then $\epsilon_X(\alpha)=0$.
\end{lemma}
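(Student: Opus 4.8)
The goal is to show that a renovation event at position $n \geq 2$ inside $\alpha$ forces $\epsilon_X(\alpha) = 0$, for every starting configuration $X$. The natural strategy is to exploit the alternative expression from Lemma~\ref{lem:alternativeDefinition},
\[
\epsilon_X(\alpha) = d_X(\alpha) - d_X(\pi\alpha) - d_X(\varpi\alpha) + d_X(\pi\varpi\alpha),
\]
and to argue that a renovation event at position $n$ (with $n\geq 2$, so that the first letter $\alpha_1$ is \emph{before} the renovation block) makes the front displacement ``factorize'': once we have performed the moves $\Phi_{\alpha_1},\dots,\Phi_{\alpha_{n-1}}$, the subsequent moves $\Phi_{\alpha_n},\dots,\Phi_{\alpha_l}$ act on the front in a way that does not see the first letter at all. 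Concretely, I would prove the key claim that the contribution of the renovation block to the front displacement is the same whether or not the first letter $\alpha_1$ was applied; that is, $d_X(\alpha) - d_X(\varpi\alpha) = d_X(\pi_{?}\cdots)$ — more precisely, that $d_X(\alpha) - d_X(\pi\alpha) = d_{X^{\varpi\alpha \text{ truncated}}}(\cdots)$. Let me restate it cleanly: I will show $d_X(\alpha) - d_X(\varpi\alpha)$ equals $d_{X}(\beta) - d_X(\varpi\beta)$ where $\beta = (\alpha_1,\dots,\alpha_{n-1})$ is the prefix before the renovation, i.e. the ``excess displacement caused by the suffix from position $n$ on'' does not depend on whether $\alpha_1$ is present — and similarly with $\pi$ applied throughout. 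Subtracting the two identities gives $\epsilon_X(\alpha)=0$.

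The heart of the matter is the following deterministic statement about the dynamics: if $\gamma = (\gamma_n,\dots,\gamma_l)$ has a renovation event at its \emph{first} position (i.e. $\gamma_{n+k} \le k+1$ for all $k$), then for any two configurations $Y \preccurlyeq Y'$ that agree to the right of the front of $Y$ up to the relevant window — in our application $Y = X^{(\alpha_2,\dots,\alpha_{n-1})}$ and $Y' = X^{(\alpha_1,\dots,\alpha_{n-1})}$, which differ only by the extra ball placed by $\Phi_{\alpha_1}$ far to the left, or at least in a bin that the renovation block never revisits — the front displacement $B(Y^\gamma,1) - B(Y,1)$ equals $B(Y'^\gamma,1)-B(Y',1)$. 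The renovation condition $\gamma_{n+k}\le k+1$ is exactly what guarantees that every move in the block reaches into the ``freshly created'' region near the current front and never as far back as the bin where $\Phi_{\alpha_1}$ left its mark: after $j$ moves of the block the front has advanced by at least... — this is the monotonicity/locality argument that already underlies the renovation events of Foss--Konstantopoulos~\cite{FK}. I would formalize it by induction on the length of the block, tracking that $B(\cdot,\xi)$ for the $\xi$'s actually used stays within the window created since the block began.

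So the steps, in order: (1) recall Lemma~\ref{lem:alternativeDefinition} and reduce $\epsilon_X(\alpha)=0$ to the single identity $d_X(\alpha)-d_X(\varpi\alpha) = d_X(\pi\alpha) - d_X(\pi\varpi\alpha)$, i.e. $[d_X(\alpha)-d_X(\pi\alpha)] = [d_X(\varpi\alpha) - d_X(\pi\varpi\alpha)]$, which by the proof of Lemma~\ref{lem:alternativeDefinition} is just the statement $\ind{\alpha\in\mathcal P_X} = \ind{\varpi\alpha\in\mathcal P_X}$ — namely, the last move of $\alpha$ creates a new bin iff the last move of $\varpi\alpha$ does; (2) prove that performing $\Phi_{\alpha_1}$ first does not change the configuration in the window of bins that the moves $\Phi_{\alpha_n},\dots,\Phi_{\alpha_l}$ touch, using the renovation condition to bound from below how far the front has moved after each of the moves $\Phi_{\alpha_n},\dots$ and comparing with where $\Phi_{\alpha_1}$ deposited its ball; (3) conclude that the configurations $X^\alpha$ and $X^{\varpi\alpha}$ agree on the window around their respective fronts that determines whether the final move created a new bin, hence $\ind{\alpha\in\mathcal P_X}=\ind{\varpi\alpha\in\mathcal P_X}$, giving $\epsilon_X(\alpha)=0$.

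The main obstacle is step (2): making rigorous the ``locality'' claim that the renovation block never reaches back to the bin affected by $\Phi_{\alpha_1}$. One has to be careful because $n\geq 2$ is essential — if $n=1$ the first letter $\alpha_1$ is itself the start of the renovation block and the argument collapses — and because the moves in the block can in principle use large indices $\xi$; the renovation inequality $\alpha_{n+k}\le k+1$ is precisely the budget that keeps each move's reach bounded by the number of balls added since the block started (plus one), which in turn is bounded by how far the front has progressed. I expect to run an induction maintaining the invariant that, writing $m_0$ for the front position just before move $\Phi_{\alpha_n}$, after the move $\Phi_{\alpha_{n+k}}$ the configurations started from $X^{(\alpha_2,\dots,\alpha_{n-1})}$ and from $X^{(\alpha_1,\dots,\alpha_{n-1})}$ coincide on all bins of index $> m_0 - 1$ (or some such explicit cutoff strictly to the right of wherever $\Phi_{\alpha_1}$ acted), using~\eqref{eqn:lips} and the monotonicity~\eqref{eqn:PhiMonotone} to control $B(\cdot,\alpha_{n+k})$. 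Once this invariant is set up correctly the rest is bookkeeping.
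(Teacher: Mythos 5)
Your overall strategy is the paper's: reduce $\epsilon_X(\alpha)=0$ to the identity $\ind{\alpha\in\mathcal{P}_X}=\ind{\varpi\alpha\in\mathcal{P}_X}$ and argue that the renovation block acts locally, so that dropping $\alpha_1$ is invisible to it. However, your step (2) has a genuine gap as formulated. You justify the key comparison by assuming that $Y=X^{(\alpha_2,\dots,\alpha_{n-1})}$ and $Y'=X^{(\alpha_1,\dots,\alpha_{n-1})}$ ``differ only by the extra ball placed by $\Phi_{\alpha_1}$'' and ``agree to the right of the front of $Y$'', and your proposed induction invariant asserts that the two runs coincide on all bins of index $>m_0-1$ for a single cutoff $m_0$. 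Neither holds: deleting $\alpha_1$ shifts the ranks seen by every subsequent move, so the two prefix evolutions diverge — $Y$ and $Y'$ may differ in many bins and their fronts generally sit at different positions, so literal coincidence of the two runs above a common cutoff is impossible. At best they agree up to a translation, and identifying that translation is precisely the point; an invariant phrased as equality of configurations on a fixed window cannot be propagated.

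The missing idea, which is what the paper's proof uses, is that the renovation condition at $k=0$ forces $\alpha_n=1$, so in \emph{both} runs the move at position $n$ places a ball in a previously empty bin — at some index $b$ when running $\alpha$ and $b'$ when running $\varpi\alpha$ — no matter how different the two configurations have become by then. From that point on, $\alpha_{n+k}\le k+1$ guarantees that the ball of rank $\alpha_{n+k}$ is one of the $k$ balls created inside the block, or (when $\alpha_{n+k}=k+1$) a ball of the nonempty bin $b-1$ (resp.\ $b'-1$); hence the block's construction uses no other information about the configuration and is carried out identically in the two runs, shifted by $b'-b$. In particular the last move creates a new bin in one run iff it does in the other. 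So the correct key statement is not a coupling of two nearby configurations under $\preccurlyeq$ with a locality hypothesis, but the stronger and simpler fact that the relative action of a word having a renovation event at its first position is the same, up to translation, for \emph{every} admissible starting configuration. With that replacement (and noting, as you did, that $n\ge 2$ ensures $\varpi\alpha$ still contains the whole block), your plan goes through; the monotonicity tools \eqref{eqn:lips} and \eqref{eqn:PhiMonotone} are not needed.
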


\begin{proof}
Let $\alpha \in \calA$ be a word of length $l$ with a renovation event at position $n \geq 2$. When we run $\alpha$ starting from the configuration $X$, the move $\alpha_n=1$ creates a ball in a previously empty bin, of index say $b$.

As $\alpha_{n+k}\leq k+1$ for all $0\leq k\leq l-n$, we are capable of placing the balls produced by these moves in bins of index $b$ or greater, without knowing any information about the bins to the left of bin $b$ (except for the fact that the bin $b-1$ contains at least one ball).

When we run $\varpi \alpha$ starting from $X$, the move $\alpha_n$ again creates a ball in a previously empty bin, of index say $b'$. Running the moves $\alpha_{n+1},\ldots,\alpha_l$ will produce the same construction as when we run $\alpha$, with everything just shifted by $b'-b$. In particular, the last move of $\alpha$ places a ball in a previously empty bin if and only if the last move of $\varpi \alpha$ places a ball in a previously empty bin. Consequently $\ind{\alpha \in \mathcal{P}_X}=\ind{\varpi \alpha \in \mathcal{P}_X}$ so $\epsilon_X(\alpha)=0$.
\end{proof}

Using Lemma~\ref{lem:renovation}, we are able to prove that for all $k \in \N$, the set of words of height smaller than $k$ such that $\epsilon_X(\alpha) \neq 0$ is finite.

\begin{lemma}
\label{lem:lengthsmallerheight}
Let $X \in S$, for any $\alpha\in \calA$ such that $L(\alpha) > H(\alpha)+1$, we have $\epsilon_X(\alpha)=0$.
\end{lemma}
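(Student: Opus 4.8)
The plan is to show that any word $\alpha$ with $L(\alpha) > H(\alpha) + 1$ necessarily contains a renovation event at some position $n \geq 2$, so that Lemma~\ref{lem:renovation} applies and forces $\epsilon_X(\alpha) = 0$. The key observation is a counting argument: recall $H(\alpha) = \sum_{i=1}^{L(\alpha)} (\alpha_i - 1)$, so the hypothesis $L(\alpha) > H(\alpha) + 1$ says that $\sum_{i=1}^{L(\alpha)} (\alpha_i - 1) \leq L(\alpha) - 2$, i.e. the ``excesses'' $\alpha_i - 1 \geq 0$ sum to at most $L(\alpha) - 2$. First I would set $l = L(\alpha)$ and, for each position $j$, track the partial ``deficit'' or slack accumulated by the first $j$ letters.

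The cleanest way to locate the renovation event is to consider, for $1 \leq n \leq l$, the quantity $f(n) = \sum_{k=0}^{l-n}(\alpha_{n+k} - (k+1))$, or better, to look for the position $n$ minimizing a suitable prefix sum. Concretely, set $g(j) = \sum_{i=1}^{j}(\alpha_i - 1)$ for $0 \leq j \leq l$, with $g(0) = 0$ and $g(l) = H(\alpha) \leq l - 2$. A renovation event at position $n$ means precisely that for all $0 \leq k \leq l - n$ one has $\sum_{i=n}^{n+k} \alpha_i \leq \sum_{i=n}^{n+k} 1 + k$ — wait, more carefully: $\alpha_{n+k} \leq k+1$ for each $k$, which is a pointwise condition, not a sum condition, so I must be a little careful. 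The right device is the following: say position $n$ is a \emph{record} if $g(n-1) \leq g(j)$ for all $j \geq n - 1$, i.e. $g(n-1) = \min_{n-1 \leq j \leq l} g(j)$. I claim that if $n \geq 2$ is such that $g(n-1) = \min_{n-1 \le j \le l} g(j)$ and moreover this minimum is attained with some strict room, then a renovation event occurs at $n$; the precise bookkeeping is that $\alpha_{n+k} \le k+1$ follows from the fact that between consecutive record positions the excesses cannot pile up. I would formalize this by induction on $l$ or by directly arguing that the last position $n$ at which the running minimum of $g$ over the suffix is achieved gives a renovation event, using that $g(l) - g(n-1) \le l - 2 - g(n-1)$ together with $g(j) \ge g(n-1)$ for $j \ge n-1$ forces each individual step $\alpha_{n+k} - 1 = g(n+k) - g(n+k-1)$ to be small enough.

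The main obstacle I anticipate is exactly pinning down this combinatorial lemma relating the sum bound $H(\alpha) \le L(\alpha) - 2$ to the pointwise renovation inequality $\alpha_{n+k} \le k+1$, and in particular ensuring $n \geq 2$ rather than $n = 1$ (the case $n=1$ is not covered by Lemma~\ref{lem:renovation}). The bound $H(\alpha) \leq l - 2$ is what gives the needed slack: if the suffix starting at the running-minimum position began at $n = 1$, one would only get $H(\alpha) \le l - 1$, so the strict improvement by $2$ is precisely what is needed to push the renovation event to a position $n \geq 2$. Once the combinatorial claim is in hand, the conclusion is immediate from Lemma~\ref{lem:renovation}. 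I would also double-check the short cases $L(\alpha) \leq 2$ separately (if $L(\alpha) \le 2$ then $L(\alpha) > H(\alpha) + 1$ forces $H(\alpha) < 1$, so $H(\alpha) = 0$ and $\alpha$ is a word of all $1$'s of length $\le 2$, which is handled directly: a word $(1)$ or $(1,1)$ has $\epsilon_X = 0$ by direct inspection since appending or deleting the first letter $1$ does not change whether the final move lands in an empty bin).
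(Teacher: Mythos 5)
You correctly reduce the lemma to producing a renovation event at some position $n\geq 2$ and then invoking Lemma~\ref{lem:renovation}, which is exactly the paper's strategy, and you correctly translate the hypothesis into $\sum_i(\alpha_i-1)\le L(\alpha)-2$. However, the heart of the lemma --- exhibiting an explicit $n\ge 2$ with $\alpha_{n+k}\le k+1$ for all $0\le k\le L(\alpha)-n$ --- is precisely the step you leave open (your ``main obstacle''), and the device you sketch cannot deliver it. Since every letter satisfies $\alpha_i\ge 1$, your prefix sums $g(j)=\sum_{i\le j}(\alpha_i-1)$ are non-decreasing, so the ``record'' condition $g(n-1)=\min_{n-1\le j\le l}g(j)$ holds at \emph{every} position and selects nothing; and if one reads your rule as ``the last position achieving the global minimum of $g$'', i.e.\ the end of the initial run of $1$'s, it selects a position where the renovation condition already fails at $k=0$: for $\alpha=(1,1,3,1,1,1,1)$ (which satisfies $L(\alpha)=7>H(\alpha)+1=3$) this rule gives $n=3$ while $\alpha_3=3>1$. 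Likewise the aggregate bound $g(l)-g(n-1)\le l-2$ by itself never implies the pointwise bounds $\alpha_{n+k}\le k+1$ (all the excess could sit in the single letter $\alpha_n$), so the choice of $n$ is the whole content, and it is missing.

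The paper fills exactly this gap with a different bookkeeping: it sets $S(k)=\sum_{i=1}^k(\alpha_i-2)$, so that letters equal to $1$ make $S$ strictly decrease, observes that $L(\alpha)>H(\alpha)+1$ means $S(l)<-1$, and takes $n=\min\{k: S(t)<-1 \ \forall t\ge k\}$. Then $S(1)=\alpha_1-2\ge-1$ forces $n\ge2$, and a short induction (using $S(n+k)\le-2$, $S(n-1)\ge-1$ and $\alpha_i\ge 1$) yields $S(n+k)\ge-k-2$ and $\alpha_{n+k}\le k+1$ for all $0\le k\le l-n$, i.e.\ a renovation event at $n$. If you replace your $g$-based record argument by this $S$-based choice of $n$ (or prove an equivalent combinatorial claim), your proof goes through; as written, it does not. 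A minor aside: your remark that $\epsilon_X((1))=0$ is false (in fact $\epsilon_X((1))=1$), but $(1)$ does not satisfy $L(\alpha)>H(\alpha)+1$, and no separate treatment of short words is needed once the general argument is in place.
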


\begin{proof}
Let $\alpha$ be a word $(\alpha_1,\ldots,\alpha_l)$ such that $l=L(\alpha)>H(\alpha)+1$. For any $1\leq k\leq l$, define $S(k)=\sum_{i=1}^k(\alpha_i-2)$. As $L(\alpha)>H(\alpha)+1$ we have $S(l)<-1$. We set $n=\min\left\{k:S(t)<-1 \ \forall t\geq k\right\}$.

Observe that we have $S(1)=\alpha_1-2\geq-1$, thus $n\geq 2$. By induction, for any $0\leq k\leq l-n$, we have $S(n+k)\geq-k-2$ and $\alpha_{n+k}\leq k+1$. Thus $\alpha$ has a renovation event at position $n\geq 2$, so $\epsilon_X(\alpha)=0$ by Lemma~\ref{lem:renovation}.
\end{proof}

Using Lemma~\ref{lem:lengthsmallerheight}, we prove the absolute convergence of the series in \eqref{eqn:defineSeries}.

\begin{lemma}
\label{lem:defineTildeC}
Let $X\in S$. The series $\sum_{\alpha \in \calA} |\epsilon_X(\alpha)| W_p(\alpha)$ converges for all $p>1/2$.
\end{lemma}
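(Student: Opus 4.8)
The plan is to bound the number of words of a given height that can possibly contribute, then sum a geometric-type series. By Lemma~\ref{lem:lengthsmallerheight}, every word $\alpha$ with $\epsilon_X(\alpha) \neq 0$ satisfies $L(\alpha) \leq H(\alpha)+1$. Since moreover $|\epsilon_X(\alpha)| \leq 1$ always, and $W_p(\alpha) = p^{L(\alpha)}(1-p)^{H(\alpha)}$, the tail of the series is controlled once we count, for each fixed height $h$, how many words $\alpha$ have $H(\alpha) = h$ and $L(\alpha) \leq h+1$. So the first step is purely combinatorial: estimate $\#\{\alpha \in \calA : H(\alpha) = h,\ L(\alpha) \leq h+1\}$.

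For that count, I would reparametrize: a word $\alpha = (\alpha_1,\dots,\alpha_\ell)$ of length $\ell$ with $H(\alpha) = h$ corresponds to a composition of $h$ into $\ell$ non-negative parts $\beta_i := \alpha_i - 1 \geq 0$ with $\sum \beta_i = h$, of which there are $\binom{h+\ell-1}{\ell-1}$. Summing over $1 \leq \ell \leq h+1$ gives
\[
  \#\{\alpha : H(\alpha) = h,\ L(\alpha) \leq h+1\} = \sum_{\ell=1}^{h+1} \binom{h+\ell-1}{\ell-1} \leq (h+1)\binom{2h}{h} \leq (h+1) 4^h.
\]
Then, grouping the series in \eqref{eqn:defineSeries} by height and using $L(\alpha) \geq 1$ together with $p^{L(\alpha)} \leq p$ when $p \le 1$ (or more simply $p^{L(\alpha)} \le 1$), I get
\[
  \sum_{\alpha \in \calA} |\epsilon_X(\alpha)| W_p(\alpha) \leq \sum_{h \geq 0} (h+1) 4^h (1-p)^h = \sum_{h \ge 0} (h+1)\bigl(4(1-p)\bigr)^h,
\]
which converges precisely when $4(1-p) < 1$, i.e. $p > 3/4$. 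That is not good enough to reach $p > 1/2$, so the crude $4^h$ bound must be sharpened.

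The main obstacle, therefore, is obtaining a count of order $(2(1-p))^{-1}$-summable size, i.e. reducing $4^h$ to roughly $2^h$ (up to subexponential factors). The key observation is that we do not merely need $L(\alpha) \le H(\alpha)+1$; Lemma~\ref{lem:renovation} kills any word with a renovation event, and the proof of Lemma~\ref{lem:lengthsmallerheight} shows that the surviving words are exactly those for which the walk $S(k) = \sum_{i=1}^k(\alpha_i - 2)$ stays $\geq -1$ until the very end — a ballot/Catalan-type constraint. Counting lattice paths with steps $\alpha_i - 2 \in \{-1, 0, 1, 2, \dots\}$ that stay $\geq -1$ and end below $-1$, refined by the total down-displacement $h+ (\text{something})$, should produce a bound of the form $C \cdot \rho^h$ with $\rho$ close to $2$ rather than $4$ (heuristically, the ballot condition halves the exponential rate). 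I would carry this out by a generating-function or reflection argument on these constrained paths, then conclude that $\sum_h (\text{count at height } h)\,(1-p)^h < \infty$ for $(1-p) < 1/\rho$, giving absolute convergence for all $p > 1/2$ (with the sharper threshold $p > \tfrac{3-\sqrt2}{2}$ of Theorem~\ref{thm:powerseries} coming from tracking constants carefully). If the refined count still only yields some $\rho \in (2,4)$, the lemma as stated for $p > 1/2$ would need the full strength of the renovation-event structure, and that extraction of the exponential growth rate of renovation-free words is where the real work lies.
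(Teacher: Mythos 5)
There is a genuine gap: the part of the argument you actually carry out only reaches $p>3/4$, and the proposed repair (sharpening the count of contributing words of height $h$ from $4^h$ to roughly $2^h$ via the renovation/ballot structure) cannot work. The count of renovation-free words of height $h$ grows strictly faster than $2^h$: for instance, any concatenation of blocks of the form $(1,\dots,1,k+1)$ with $j\leq k-1$ ones in front has no renovation event at any position (a renovation would have to start at a letter $1$, but the letter $k+1$ sitting at offset $j-i+1$ violates the condition $\alpha_{n+k}\le k+1$), and counting such words by height gives a generating function $\bigl(1-\tfrac{x}{(1-x)^2}\bigr)^{-1}$ with growth rate $\tfrac{3+\sqrt{5}}{2}\approx 2.618$. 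So any bound of the form $C\rho^h$ obtained by counting admissible words alone forces $\rho\geq 2.618$, and the resulting condition $\rho(1-p)<1$ stops well short of $p>1/2$. Also, the claim that the surviving words are exactly those whose walk $S(k)$ stays $\geq -1$ throughout overstates what Lemmas~\ref{lem:renovation} and~\ref{lem:lengthsmallerheight} give: only the endpoint condition $L(\alpha)\leq H(\alpha)+1$ is established, not a prefix constraint.

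The missing idea is that you must not throw away the factor $p^{L(\alpha)}$. The paper's proof uses nothing finer than $L(\alpha)\leq H(\alpha)+1$ and the same composition count $\#\calA_l^h=\binom{h+l-1}{l-1}$ that you have; the point is to keep the weight $p^l(1-p)^h$ and reindex by $n=h+l$, so that
\[
\sum_{h\geq 0}\sum_{l=0}^{h} \binom{h+l}{l}p^l(1-p)^h \;=\; \sum_{n\geq 0}\;\sum_{l=0}^{\floor{n/2}}\binom{n}{l}p^l(1-p)^{n-l} \;=\; \sum_{n\geq 0}\P\left(S_n\leq 0\right),
\]
where $(S_n)$ is the $\pm 1$ random walk with $\P(S_1=+1)=p$. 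For $p>1/2$ the walk has positive drift $2p-1$, so $\P(S_n\leq 0)$ decays exponentially (Cram\'er/Chernoff) and the series converges. In other words, the binomial weighting of length against height, not a sharper combinatorial count, is what brings the threshold down to $p>1/2$; the $\frac{3-\sqrt{2}}{2}$ appearing in Theorem~\ref{thm:powerseries} comes from a separate Chernoff estimate in the expansion around $r=1$, not from refining this lemma.
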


\begin{proof}
Let $p > 1/2$. Define $\calA_l^h$ to be the set of words of length $l$ and height $h$. Observe that $\calA_l^h$ is the set of compositions of the integer $h+l$ into $l$ parts and it is well-known that $\#\calA_l^h=\binom{h+l-1}{l-1}$. By Lemma~\ref{lem:lengthsmallerheight}, if $\alpha$ is a word such that $|\epsilon_X(\alpha)|=1$, then $L(\alpha)\leq H(\alpha)+1$, thus
\[
\sum_{\alpha \in \calA} |\epsilon_X(\alpha)| W_p(\alpha) \leq \sum_{h \geq 0} \sum_{l=1}^{h+1} \sum_{\alpha\in\calA_l^h} W_p(\alpha).
\]

By definition of $W_p(\alpha)$, we have
\begin{align*}
\sum_{\alpha \in \calA} |\epsilon_X(\alpha)| W_p(\alpha) &\leq \sum_{h\geq0} \sum_{l=1}^{h+1} p^l(1-p)^h\#\calA_l^h  \\
&\leq p\sum_{h\geq0} \sum_{l=0}^{h} p^l(1-p)^h\binom{h+l}{l}.
\end{align*}
Let $\left(S_n\right)$ be a random walk on $\Z$ starting at $0$ and doing a step $+1$ (resp. $-1$) with probability $p$ (resp. $1-p$). Then for all $p>1/2$, we have
\begin{align*}
\sum_{h\geq0} \sum_{l=0}^{h} p^l(1-p)^h\binom{h+l}{l} &= \sum_{n\geq0} \sum_{l=0}^\floor{n/2} \binom{n}{l} p^l (1-p)^{n-l} \\
&=\sum_{n\geq0} \sum_{l=0}^\floor{n/2} \P\left(S_n = 2l - n\right) 
= \sum_{n\geq0} \P\left(S_n \leq 0\right) < +\infty.
\end{align*}
Indeed, we have $\E(S_1) = 2p-1>0$, and $\P(S_n \leq 0)$ decays exponentially fast by Cram\'er's large deviations theorem.
\end{proof}

Using the above lemma and Theorem~\ref{thm:seriesformula}, we immediately obtain the following result.
\begin{lemma}
\label{lem:estimateSpeed}
For any $X \in S$ and $p>1/2$, \eqref{eqn:defineSeries} holds.
\end{lemma}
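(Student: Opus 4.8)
The plan is to check that all hypotheses of Theorem~\ref{thm:seriesformula} are satisfied for the geometric measure $\mu_p$, and then simply combine it with the Foss--Konstantopoulos identity \eqref{eq:coupling}. First I would recall that $\mu_p$ is a genuine probability distribution on $\N$, and that with the notation of Section~\ref{sec:seriesFormula} one has $W_{\mu_p}(\alpha) = p^{L(\alpha)}(1-p)^{H(\alpha)} = W_p(\alpha)$ for every $\alpha \in \mathcal{A}$, so that the right-hand side of \eqref{eqn:defineSeries} is precisely $\sum_{\alpha \in \mathcal{A}} \epsilon_X(\alpha) W_{\mu_p}(\alpha)$.

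The key observation is that the absolute-convergence hypothesis required by Theorem~\ref{thm:seriesformula}, namely $\sum_{\alpha \in \mathcal{A}} |\epsilon_X(\alpha)| W_{\mu_p}(\alpha) < +\infty$, is exactly the statement of Lemma~\ref{lem:defineTildeC} for $p > 1/2$. Thus Theorem~\ref{thm:seriesformula}, applied with $\mu = \mu_p$ and the given admissible configuration $X$, yields $v_{\mu_p} = \sum_{\alpha \in \mathcal{A}} \epsilon_X(\alpha) W_p(\alpha)$. Finally, the coupling identity \eqref{eq:coupling} gives $C(p) = v_p = v_{\mu_p}$, so \eqref{eqn:defineSeries} holds for every $X \in S$ and every $p > 1/2$.

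There is essentially no obstacle in this step: the substantive estimates have already been done in Lemmas~\ref{lem:renovation}--\ref{lem:defineTildeC}, and the present statement is the bookkeeping step that feeds their conclusion into the two black boxes (Theorem~\ref{thm:seriesformula} and the coupling \eqref{eq:coupling}). The only mild care needed is to note that all the convergences involved (in Theorem~\ref{thm:existsSpeed}, in \eqref{eq:coupling}, and in the series) are to the same deterministic constant, so identifying $v_{\mu_p}$ with both $C(p)$ and the value of the series is immediate.
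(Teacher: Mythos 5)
Your proposal is correct and follows exactly the paper's route: the paper likewise deduces \eqref{eqn:defineSeries} by combining the absolute convergence established in Lemma~\ref{lem:defineTildeC} with Theorem~\ref{thm:seriesformula}, using the Foss--Konstantopoulos identity $C(p)=v_{\mu_p}$ from \eqref{eq:coupling}. Nothing is missing; this is indeed the intended bookkeeping step.
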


We use this formula for $C$ to prove that the function can be written as a power series around every $p > 1/2$.
\begin{proof}[Proof of Theorem~\ref{thm:powerseries}]
Fix $\tfrac{1}{2}<p \leq r \leq 1$ and write $x=r-p\geq 0$. We write $C(p)=C(r-x)$ as a power series in $x$ and determine its radius of convergence.
\begin{align*}
C(p)&= \sum_{\alpha \in \calA} \epsilon_X(\alpha)p^{L(\alpha)}(1-p)^{H(\alpha)} \\
&=\sum_{\alpha \in \calA} \epsilon_X(\alpha)(r-x)^{L(\alpha)}(1-r+x)^{H(\alpha)} \\
&=\sum_{\alpha \in \calA} \epsilon_X(\alpha)\sum_{i=0}^{L(\alpha)}\binom{L(\alpha)}{i}(-1)^{i}x^ir^{L(\alpha)-i}\sum_{j=0}^{H(\alpha)}\binom{H(\alpha)}{j}x^j(1-r)^{H(\alpha)-j} .
\end{align*}
Taking absolute values inside the last series, we obtain
\begin{align*}
&\sum_{\alpha \in \calA} |\epsilon_X(\alpha)|\sum_{i=0}^{L(\alpha)}\binom{L(\alpha)}{i}x^ir^{L(\alpha)-i}\sum_{j=0}^{H(\alpha)}\binom{H(\alpha)}{j}x^j(1-r)^{H(\alpha)-j} \\
&=\sum_{\alpha \in \calA} |\epsilon_X(\alpha)|(r+x)^{L(\alpha)}(1-r+x)^{H(\alpha)} \\
&=\sum_{\alpha \in \calA} |\epsilon_X(\alpha)|(2r-p)^{L(\alpha)}(1-p)^{H(\alpha)}.
\end{align*}
By the same computations as in Lemma~\ref{lem:defineTildeC}, we have
\[
\sum_{\alpha \in \calA} |\epsilon_X(\alpha)|(2r-p)^{L(\alpha)}(1-p)^{H(\alpha)} \leq (1-p)\sum_{h \geq 0}\sum_{l=0}^h (2r-p)^l(1-p)^h\binom{h+l}{l}.
\]
If this quantity is finite, then the power series expansion of $C$ around $r$ has a radius of convergence at least $r-p$. Writing $(S_n^{p,r})$ for a random walk on $\Z$ starting at $0$ and doing a step  $+1$ (resp. $-1$) with probability $\tfrac{2r-p}{2r+1-2p}$ (resp. $\tfrac{1-p}{2r+1-2p}$), we have
\begin{equation}
\label{eq:geometricdomination}
\sum_{h \geq 0}\sum_{l=0}^h (2r-p)^l(1-p)^h\binom{h+l}{l}=\sum_{n\geq0}(2r+1-2p)^n \P\left(S_n^{p,r}\leq0\right).
\end{equation}
By Chernoff's bound, we obtain
\begin{align*}
\P\left(S_n^{p,r}\leq0\right)&\leq \inf_{t>0} \left(\E\left[e^{-tS^{p,r}_1}\right]\right)^n \\
&\leq \inf_{t>0} \left(\frac{2r-p}{2r+1-2p}e^{-t}+\frac{1-p}{2r+1-2p}e^t\right)^n \\
&\leq \left(\frac{2\sqrt{(2r-p)(1-p)}}{2r+1-2p}\right)^n.
\end{align*}
Thus the series in~\eqref{eq:geometricdomination} converges as soon as $2\sqrt{(2r-p)(1-p)}<1$, i.e. if
\[r+\tfrac{1}{2}-\sqrt{r^2-r+\tfrac{1}{2}} <p \leq r \leq1.\]

For $r>1/2$, we have $r+\tfrac{1}{2}-\sqrt{r^2-r+\tfrac{1}{2}}<r$, thus the power series expansion of $C$ centered at $r$ has a positive radius of convergence. Therefore $C$ is analytic on $\left(\tfrac{1}{2},1\right]$. In particular, for $r=1$, expanding the expression in \eqref{eqn:defineSeries} in powers of $(1-p)$, we conclude that for $p$ larger than $\frac{3-\sqrt{2}}{2}$, we have
\[
  C(p) = \sum_{k \geq 0} a_k (1-p)^k,
\]
with $(a_k)$ defined in \eqref{eqn:formulaCoeff}.
\end{proof}

\section{Longest directed path in sparse graphs}
\label{sec:smallp}

We study in this section the asymptotic behaviour of $C(p)$ as $p \to 0$. Newman proved in \cite{N} that $C(p) \sim pe$. We link in Section~\ref{subsec:proofThm} this result with the estimate obtained by Aldous and Pitman \cite{AP} for the speed of an IBM with uniform distribution. Let $k \in \N$, we write $\nu_k$ for the uniform distribution on $\{1, \ldots, k\}$ and $w_k$ for the speed of the IBM($\nu_k$), Aldous and Pitman proved that
\begin{equation}
  \label{eqn:AP}
  (kw_k, k \in \N) \text{ increases toward } e \text{ as } k \to +\infty.
\end{equation}
This result is obtained by observing that the IBM($\nu_k$) can be coupled with a continuous-time branching random walk with selection.

Recent developments were obtained on the asymptotic behaviour of the speed of a discrete-time branching random walk with selection. This behaviour was conjectured by Brunet and Derrida \cite{BD97}, and proved recently by B\'erard and Gou\'er\'e \cite{BeG10}. The result of Bérard and Gouéré was extended by Mallein \cite{Mal15a,Mal15b} to more general discrete-time branching random walks. In discrete-time branching random walks with selection, multiple reproduction events may occur at the same time, while in the infinite-bin model, which is also a discrete-time process, only one reproduction event occurs at each time step.

We thus consider the infinite-bin model as the pure jump process of a continuous-time particle system in which a move of type $k$ happens at rate $\mu(k)$. This particle system can be coupled with a continuous-time branching random walk with selection. In particular, the IBM($\nu_k$) corresponds to the jump process of a system of $k$ particles in which every particle gives birth to a child at rate $1/k$, which is put one step to its right. Simultaneously, the leftmost particle is removed from the process. We extend the results obtained for discrete-time branching random walks to continuous-time versions, proving in this section the following estimate.
\begin{lemma}
\label{lem:AP}
We have $kw_k = e - \frac{\pi^2 e}{2} (\log k)^{-2} (1 + o(1))$ as $k \to +\infty$.
\end{lemma}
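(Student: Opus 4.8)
The plan is to realize the infinite-bin model with uniform distribution $\nu_k$ as a continuous-time branching-random-walk with selection, following the coupling of Aldous and Pitman~\cite{AP}, and then transport the Bérard--Gouéré~\cite{BeG10} asymptotics for the speed of the $N$-particle branching random walk from the discrete-time to the continuous-time setting. Recall that~\eqref{eqn:AP} already gives $kw_k \to e$; the content of the lemma is the second-order correction $-\tfrac{\pi^2 e}{2}(\log k)^{-2}$, which must come from a selection effect of the Brunet--Derrida type.

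First I would set up the coupling. Consider $N=k$ particles on $\mathbb{Z}$ (or $\mathbb{R}$), each carrying an independent exponential clock; when a particle rings, it creates a new particle one step to the right of it, and the leftmost particle among the now-$N+1$ particles is deleted (this keeps the population at $N$ and mimics the fact that in the IBM with support $\{1,\dots,k\}$ only the $k$ rightmost balls are ever selected, and a ball is added one bin to the right of the chosen one). The position of the front of this $N$-particle system advances at a deterministic asymptotic speed $v_N^{\mathrm{ct}}$, and the Aldous--Pitman correspondence identifies $k w_k = v_k^{\mathrm{ct}}$ after the appropriate normalization (the factor $k$ in $kw_k$ is exactly the time-change between the IBM clock, which ticks once per move, and the continuous-time clock, where $k$ particles each ring at rate $1$). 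I would recall the precise statement of this correspondence from~\cite{AP} and state it as the first step.

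Next I would recall the Bérard--Gouéré theorem: for a discrete-time branching random walk with $N$-particle selection whose underlying log-Laplace transform $\psi$ has a minimum at some $\theta^*$ with $v_\infty = \psi(\theta^*)/\theta^*$, the $N$-particle speed satisfies $v_N = v_\infty - \tfrac{\chi}{(\log N)^2}(1+o(1))$ with $\chi = \tfrac{\pi^2 \sigma^2 \theta^*}{2}$ for an explicit constant $\sigma^2$ (a second derivative of $\psi$ at $\theta^*$). The main work is to run their argument in continuous time: the branching mechanism here is a Yule process rather than an i.i.d. offspring law, so the ``first moment'' / many-to-one lemma becomes $\E[\#\{\text{particles at time } t \text{ above level } x\}] = e^{t}\,\P(\text{a rate-1 Poisson-jump walk is above } x \text{ at time } t)$, and one computes the corresponding $\psi(\theta) = e^\theta$ (each birth moves the child by $+1$, so the cumulant generating function of a single step is trivial and the log-growth-rate of the expected number of particles to the right of slope-$a$ line is $e^\theta - a\theta$ optimized over the tilt $\theta>0$). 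One gets $v_\infty = \min_{\theta>0} \tfrac{e^\theta}{\theta} = e$, attained at $\theta^* = 1$, and the relevant variance-type parameter is $\psi''(\theta^*)=e$, giving $\chi = \tfrac{\pi^2}{2}\cdot e \cdot 1 = \tfrac{\pi^2 e}{2}$, which is exactly the claimed constant. So the plan is: (i) establish the many-to-one formula and the large-deviation rate function in continuous time; (ii) prove the upper bound $v_N^{\mathrm{ct}} \leq e - \tfrac{\pi^2 e}{2}(\log N)^{-2}(1+o(1))$ by a first-moment/barrier argument (a particle that stays below the curved Bramson-type barrier for a long time has exponentially small expected count, and the selection forces the front to lag); (iii) prove the matching lower bound by the coupling-with-a-killed-BRW / coarse-graining technique of~\cite{BeG10}, showing the population survives above the slightly-lowered line with high probability.

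The hard part will be step (iii), the lower bound, transposed to continuous time: Bérard and Gouéré's proof is a delicate multi-scale argument comparing the selected system to a branching random walk killed below a linear barrier, and one must check that their lemmas (the survival probability of the killed BRW near criticality, the renewal structure of ``good'' time intervals, the second-moment control) all go through when the branching is a Yule process with unit-step children and when ``number of particles'' is continuous in time. I expect no genuinely new phenomenon — the computations above show the constants line up — but verifying the continuous-time analogues of each estimate, and in particular the near-critical survival asymptotics for the killed continuous-time BRW, is where essentially all the effort lies. A cleaner alternative, which I would mention, is to discretize time at scale $\epsilon$, apply the discrete-time theorem of~\cite{BeG10} to the $\epsilon$-skeleton (whose step distribution is a compound Poisson law, for which $\psi$ and its derivatives are explicit), and let $\epsilon \to 0$, checking that the $o(1)$ error is uniform in $\epsilon$; this reduces the continuous-time case to a limiting argument on top of the known discrete result.
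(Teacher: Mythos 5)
Your first step — the Aldous--Pitman coupling and the identification of the constants — is exactly the paper's: Poissonizing time at rate $k$, the $k$ rightmost balls of the IBM$(\nu_k)$ evolve as a continuous-time $N$-BRW with $\lambda=1$ and offspring point process $L=\delta_0+\delta_1$, so $\Lambda(\theta)=e^\theta$, $v=e$, $\phi^*=1$, $\tau^2=e$, and the rate-$k$ time change converts the IBM speed into $kw_k$; your many-to-one computation and the value $\chi=\tfrac{\pi^2 e}{2}$ are correct. The divergence, and the gap, is in how you get the continuous-time analogue of the B\'erard--Gou\'er\'e asymptotics (the paper's Lemma~\ref{lem:bg}). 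Your primary route (redoing the whole multi-scale argument of \cite{BeG10} in continuous time) is only a statement of intent, and your ``cleaner alternative'' is where the concrete problem lies: the $\epsilon$-skeleton of the continuous-time $N$-BRW is \emph{not} a discrete-time $N$-BRW, because in continuous time selection acts at every birth event while in the skeleton it acts only at skeleton times. Relaxing the selection between skeleton times gives a process that dominates the true one, so this comparison yields only the upper bound on $v_N$; no amount of ``uniformity in $\epsilon$'' fixes the lower bound, which is the genuinely hard direction. Moreover the skeleton reproduction law is not binary, so the theorem of \cite{BeG10} does not apply as stated; one needs Mallein's extension \cite{Mal15b} to general offspring laws.

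The paper resolves precisely this asymmetry. After establishing a monotone coupling for selection with time-varying population caps (Lemma~\ref{lem:coupbrw}) and the existence of $v_N$ by Kingman's theorem, the upper bound (Lemma~\ref{lem:ubbrw}) is your skeleton comparison at mesh $1$: selection only at integer times gives an $F$-BRW with $F_t\geq N$ dominating the $N$-BRW, whose integer-time values form a discrete $N$-BRW with $\kappa=\lambda\Lambda$, handled by \cite{Mal15b}. For the lower bound (Lemma~\ref{lem:lbbrw}) one must build a process \emph{dominated} by the $N$-BRW: the paper runs a discrete-time $\lfloor\alpha N\rfloor$-BRW on a skeleton of mesh $\beta$ chosen so that $\E(\#\calN_\beta)<1/\alpha$, uses Cram\'er bounds to show the unselected population between skeleton times exceeds $N$ only with probability $\rho^N$, and restarts all particles from the minimum whenever this (or a slow-speed event controlled by \cite[Lemma 4.7]{Mal15b}) occurs; the restart cost is shown to be negligible at the $(\log N)^{-2}$ scale, and no $\epsilon\to 0$ limit is ever needed because the correction $\tfrac{\pi^2\theta^*\sigma^2}{2}$ rescales by $\beta$ exactly as the speed does. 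If you pursue your plan, you either have to carry out the continuous-time analogues of the killed-BRW survival estimates in full, or supply a construction of this kind for the lower bound; as written, that step is missing.
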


Applying Lemma~\ref{lem:AP} to bound compute the asymptotic behaviour of $C$ as $p \to 0$, we are able to prove Theorem~\ref{thm:smallp} :
\begin{equation}
\label{eqn:asymptoticBehaviour}
C(p) = e p \left( 1 - \frac{\pi^2 }{2} (-\log p)^{-2}\right)  + o(p (-\log p)^{-2}) \text{ as } p \to 0.
\end{equation}

The rest of the section is organized as follows. In Section~\ref{subsec:proofThm}, we prove Theorem~\ref{thm:smallp} assuming Lemma~\ref{lem:AP}. In Section~\ref{subsec:proofLem}, we prove Lemma~\ref{lem:AP} assuming that the Brunet--Derrida behaviour of continuous-time branching random walks with selection is known. Preliminary results on continuous-time branching random walks with selection are derived in Section~\ref{subsec:estBrw} and  the speed of the cloud of particles in a continuous-time branching random walk with selection is finally obtained in Section~\ref{subsec:proofBrw}, completing the proof of Theorem~\ref{thm:smallp}.

\subsection{Proof of Theorem~\ref{thm:smallp} assuming Lemma~\ref{lem:AP}}
\label{subsec:proofThm}

We use the increasing coupling of Proposition~\ref{prop:speedIBM} to link the asymptotic behaviours of $w_k$ and $C(1/k)$ as $k \to +\infty$.

\begin{lemma}
\label{lem:coupp}
For any $k \in \N$ we have
\begin{align*}
  &\forall p \in [\tfrac{1}{k+1},\tfrac{1}{k}], \, C(p) \leq w_k\\
  &\forall p \in [0,1],\, C(p) \geq kp(1-p)^k w_k.
\end{align*}
\end{lemma}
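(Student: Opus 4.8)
The plan is to use the monotonicity of the speed with respect to the stochastic order on the step distribution, as provided by Proposition~\ref{prop:speedIBM} (specifically the implication~\eqref{eqn:encadrementSpeed}), together with Lemma~\ref{lem:obvious} to handle the ``lazy'' rescaling. Recall that $C(p) = v_{\mu_p}$ where $\mu_p$ is the geometric distribution with parameter $p$, and $w_k = v_{\nu_k}$ where $\nu_k$ is the uniform distribution on $\{1,\dots,k\}$. So both inequalities amount to comparing a geometric measure with a (possibly rescaled) uniform measure in the sense of the tail order $\mu([1,j]) \leq \nu([1,j])$ for all $j \in \N$.

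For the first inequality, fix $p \in [\tfrac{1}{k+1},\tfrac{1}{k}]$ and compare $\mu_p$ with $\nu_k$. For $1 \leq j \leq k$ we have $\mu_p([1,j]) = 1-(1-p)^j$ and $\nu_k([1,j]) = j/k$; I would check that $1-(1-p)^j \leq j/k$ for all such $j$, which follows from the elementary bound $1-(1-p)^j \leq jp \leq j/k$ (using $p \leq 1/k$), and for $j \geq k$ the inequality $\mu_p([1,j]) \leq 1 = \nu_k([1,j])$ is trivial. Hence $\mu_p([1,j]) \leq \nu_k([1,j])$ for all $j$, and~\eqref{eqn:encadrementSpeed} gives $C(p) = v_{\mu_p} \leq v_{\nu_k} = w_k$. (The hypothesis $p \geq \tfrac{1}{k+1}$ is not needed for this direction; it just marks the relevant range of $p$.)

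For the second inequality, fix $p \in [0,1]$ and introduce the measure $\tilde\nu_k$ on $\bar\N$ that equals $\nu_k$ reweighted by the factor $kp(1-p)^k$ and puts the remaining mass $1 - kp(1-p)^k$ on $\{\infty\}$: that is, $\tilde\nu_k(\{j\}) = p(1-p)^k$ for $1\le j\le k$ and $\tilde\nu_k(\{\infty\}) = 1 - kp(1-p)^k$. By Lemma~\ref{lem:obvious} (with $1-p_{\text{lazy}} = kp(1-p)^k$), the IBM($\tilde\nu_k$) has speed $v_{\tilde\nu_k} = kp(1-p)^k\, w_k$. It therefore suffices to show $\tilde\nu_k([1,j]) \leq \mu_p([1,j])$ for all $j \in \N$, since then~\eqref{eqn:encadrementSpeed} yields $kp(1-p)^k w_k = v_{\tilde\nu_k} \leq v_{\mu_p} = C(p)$. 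For $1 \leq j \leq k$ this reads $jp(1-p)^k \leq 1-(1-p)^j = p\sum_{i=0}^{j-1}(1-p)^i$, i.e.\ $j(1-p)^k \leq \sum_{i=0}^{j-1}(1-p)^i$, which holds termwise because each of the $j$ summands $(1-p)^i$ with $0\le i\le j-1\le k-1$ is at least $(1-p)^k$. For $j \geq k$ we have $\tilde\nu_k([1,j]) = kp(1-p)^k \leq 1-(1-p)^k \leq \mu_p([1,j])$, using the termwise bound again with $j=k$. This proves the claim.

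The only genuinely delicate point is bookkeeping around Lemma~\ref{lem:obvious}: one must be careful that the ``sub-probability'' $kp(1-p)^k$ is indeed $\leq 1$ for all $p\in[0,1]$ (so that $\tilde\nu_k$ is a bona fide probability measure on $\bar\N$), which follows from the termwise comparison $k(1-p)^k \le \sum_{i=0}^{k-1}(1-p)^i \le \sum_{i=0}^{k-1} 1 = k$ just used, or directly from $kp(1-p)^k \le 1-(1-p)^k \le 1$. Everything else is a routine application of the stochastic-domination criterion~\eqref{eqn:encadrementSpeed}, so I expect no real obstacle beyond writing the elementary inequalities cleanly.
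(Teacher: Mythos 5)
Your proof is correct and follows essentially the same route as the paper: both inequalities come from the stochastic-domination criterion \eqref{eqn:encadrementSpeed}, with the lower bound obtained by comparing $\mu_p$ with a lazy uniform measure $x\nu_k+(1-x)\delta_\infty$ and invoking Lemma~\ref{lem:obvious}. The only cosmetic difference is that the paper takes $x=kp(1-p)^{k-1}$, which yields the marginally stronger bound $C(p)\geq kp(1-p)^{k-1}w_k$, whereas your choice $x=kp(1-p)^k$ matches the stated exponent.
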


\begin{proof}
Let $k \in \N$ and $p \in [\tfrac{1}{k+1},\tfrac{1}{k}]$. We observe that for any $j \in \N$,
\[
  \mu_p([1,j]) = \sum_{i=1}^j p(1-p)^{i-1} \leq (pj) \wedge 1 \leq \nu_k([1,j]).
\]
Therefore $C(p) \leq w_k$ by \eqref{eqn:encadrementSpeed}.

Let $p \in [0,1]$, we set $x = k p (1-p)^{k-1}$. Observe that $0\leq x\leq 1$. For any $j \in \N$, we have
\[
  \mu_p([1,j]) = \sum_{i=1}^j p(1-p)^{i-1} \geq (j\wedge k) p(1-p)^{k-1} \geq k \nu_k([1,j]) p(1-p)^{k-1}.
\]
Therefore, writing $\nu_k^x = x \nu_k + (1 - x) \delta_\infty$, we have $\mu_p([1,j])\geq \nu_k^x([1,j])$ for any $j \in \N$. We apply \eqref{eqn:encadrementSpeed} to $\mu_p$ and $\nu_k^x$. By Lemma~\ref{lem:obvious}, the speed of the IBM($\nu_k^x$) is $xw_k$. We conclude that for any $k \in \N$ and $p \in [0,1]$, we have
\[C(p) \geq k p (1-p)^{k-1} w_k. \qedhere \]
\end{proof}

We now prove Theorem~\ref{thm:smallp} assuming that Lemma~\ref{lem:AP} holds.

\begin{proof}[Proof of Theorem~\ref{thm:smallp}]
For any $k \in \N$ and $p \in [\frac{1}{k+1},\frac{1}{k}]$, by Lemma~\ref{lem:coupp}, we have $C(p)/p \leq (k+1)w_k$, therefore Lemma~\ref{lem:AP} yields
\[
  \limsup_{p \to 0} (\log p)^2\left(\frac{C(p)}{p} - e\right) \leq \limsup_{k \to +\infty} (\log k)^2 \left((k+1)w_k - e \right) \leq - \frac{\pi^2 e}{2}.
\]

By Lemma~\ref{lem:coupp} again, we have $C(p)/p \geq (1-p)^k (k w_k)$ for any $k \in \N$ and $p \in [0,1]$. Let $\delta >0$, we set $k = \ceil{1/p^{1-\delta}}$. Then
\[  (\log p)^2 \left( \frac{C(p)}{p} - e\right) \geq \frac{(\log k)^2}{(1-\delta)^2} \left( (1-p)^k (k w_k) - e \right). \]
Using again Lemma~\ref{lem:AP} and the fact that $(1-p)^k - 1 \sim -p^\delta$ as $p \to 0$, we have
\[
  \liminf_{p \to 0} (\log p)^2\left(\frac{C(p)}{p} - e\right) \geq - \frac{\pi^2 e}{2(1-\delta)^2}.
\]
Letting $\delta \to 0$ concludes the proof.
\end{proof}

\subsection{Proof of Lemma~\ref{lem:AP} using branching random walks}
\label{subsec:proofLem}

As said in the introduction to the section, to obtain the asymptotic behaviour of its speed, Aldous and Pitman compared the IBM($\nu_k$) with a continuous-time branching random walk with selection, that we now define more precisely. Let $k \in \N$, we define a continuous-time system of $k$ particles on $\Z$ as follows. At time $0$, the positions of particles are ranked in a non-increasing order as  $Y^k_0(1) \geq Y^k_0(2) \geq \cdots \geq Y^k_0(k)$. Particles stay in place for all their lifetime. Each particle, independently of all others, reproduces at rate $1$. A the first reproduction time $t$, the parent particle creates a new daughter particle one step to its right. Simultaneously, the leftmost particle is erased so that the total number of particles remains equal to $k$. The positions of particles are then updated as $Y^k_t(1) \geq \cdots \geq Y^k_t(k)$, setting $Y^k_s(j) = Y^k_0(j)$ for $j \leq k$ and $s < t$. After this reproduction event, particles in the process continue to reproduce and be deleted according to the same procedure.

The process $Y^k$ is called a (continuous-time) branching random walk with selection. Indeed, the particles reproduce independently of one another, but the total size of the population is capped to a fixed number $k$ by removing particles from the left at each time a new particle is born. Using proof techniques coming from discrete-time branching random walks with selection, we will show in the forthcoming sections the following estimate for the speed of the cloud of particles $(Y^k_t(j), j \leq k)$ as $t \to \infty$.
\begin{lemma}
\label{lem:vitesseBRWSpecialCase}
For all $k \in \N$, there exists $c_k \in \R$ such that
\[
  \lim_{t \to \infty} \frac{Y^k_t(1)}{t} = \lim_{t \to \infty} \frac{Y^k_t(k)}{t} = c_k \quad \text{a.s.}
\]
Moreover, we have $c_k - e \sim -\frac{\pi^2 e}{2} (\log k)^{-2}$ as $k \to \infty$.
\end{lemma}

The existence of the speed $c_k$ of the branching random walk with selection $Y^k$ is proved in Section~\ref{subsec:estBrw}, and its asymptotic behaviour as $k \to \infty$ is obtained in Section~\ref{subsec:proofBrw} by adapting the proofs used in \cite{BeG10,Mal15b}. Assuming for now that Lemma~\ref{lem:vitesseBRWSpecialCase} holds, we prove Lemma~\ref{lem:AP} using the Aldous-Pitman coupling described below.

\begin{proof}[Proof of Lemma~\ref{lem:AP}]
Let $k \in \N$. We write $(N_t,t \geq 0)$ for a Poisson process of parameter $k$ and $(X_n, n \geq 0)$ for an independent IBM($\nu_k$). For any $t>0$, we denote by $(Y_t(j), j \leq k)$ the positions of the rightmost $k$ balls in the configuration $X_{N_t}$, ranked in a non-increasing order.

We observe that $(Y_t(u), u \leq k)$ evolves as follows: every ball stays put until an exponential random time with parameter $k$. At that time $T$, a ball with index $u \leq k$ is chosen uniformly at random, a new ball is added at position $Y_T(u)+1$ and the leftmost ball is erased.

By classical properties of exponential random variables, this evolution admits the following alternative description. To each ball is associated a clock with parameter $1$. When a clock rings, the corresponding ball makes a ``child'' to the right of its current position, and the leftmost ball is erased. Therefore, the law of $(Y^k_t(u), u \leq k)$ is the same as the continuous-time branching random walk with selection described above. As a result, we deduce from Lemma~\ref{lem:vitesseBRWSpecialCase} that:
\[
  \lim_{t \to \infty} \frac{B(X_{N_t},1)}{t} = c_k \quad \text{a.s.}
\]
Using the fact that $N_t \sim k t$ a.s. as $t \to \infty$, by the law of large numbers we deduce that $k w_k = c_k$. Therefore, Lemma~\ref{lem:AP} is a direct consequence of Lemma~\ref{lem:vitesseBRWSpecialCase}.
\end{proof}

\subsection{Speed of the \texorpdfstring{$k$}{\textit{k}}-branching random walk}
\label{subsec:estBrw}

In this section, we present an increasing coupling for branching random walks with selection, introduced by Bérard and Gouéré \cite{BeG10}. This increasing coupling is similar in nature to Proposition~\ref{prop:increasingCoupling} but cannot be obtained as a straightforward corollary of it. Loosely speaking, we aim to couple here branching random walks with selection with different numbers of particles. The coupling expresses that the larger the population is in that branching process, the faster it moves to the right. To state this coupling, we extend the definition of branching random walk with selection to authorize the maximal size of the population to vary.

To do so, we first define the branching random walk without selection. This is a particle system on $\Z$ in which the particles behave independently of each other. After an exponential time of parameter $1$, a particle creates a child one step to its right. For all $t \geq 0$, we denote by $\mathcal{N}_t$ the set of particles alive at time $t$, and by $\mathcal{Y}_t(u)$ the position in $\Z$ of the particle $u \in \mathcal{N}_t$. The process $(\mathcal{Y}_t(u), u \in \mathcal{N}_t)_{t \geq 0}$ is referred to as the continuous-time branching random walk.

Let $H$ be a càdlàg integer-valued process adapted to the filtration of the branching random walk $(\mathcal{Y}_t(u), u \in \mathcal{N}_t)_{t \geq 0}$. We define the $H$-branching random walk as the following process. At time $0$, if there are more than $H_0$ particles in $\mathcal{N}_0$, we kill particles, together with their offspring, except the $H_0$ rightmost ones (with ties broken uniformly at random). Next, at each time $t$ such that the remaining number of particles in the process becomes larger than $H_t$ (either because $H_t < H_{t-}$ or because a birth occurred in the system), we kill particles (and their offspring) from the left until only $H_t$ remain. At every time $t \geq 0$, we set $Y^H_t(1), \ldots Y^H_t(H_t)$ to be the positions of the particles alive at time $t$ in this process, ranked in a non-increasing order. We set $Y^H_t(j) = -\infty$ by convention if there are less than $j$ particles alive at that time in the process. The process $(Y^H_t(j), j \leq H_t)_{t \geq 0}$ is referred to as the $H$-branching random walk, or $H$-BRW for short.

Note that if $H$ is a constant process, equal to $k \in \N$, then the process $Y^H$ is the same as $Y^k$ the branching random walk with selection defined in the previous section. The notation we chose is thus consistent. We now state the coupling process result, which is the main result of the section.

\begin{lemma}
\label{lem:coupbrw}
Let $Y^H$ be an $H$-BRW and $\tilde{Y}^K$ be a $K$-BRW. We assume that
\[
  \forall x \in \R,  \# \{ j \leq H_0 : Y^H_0(j) \geq x \} \leq \# \{ j \leq K_0 : \tilde{Y}^K_0(j) \geq x \}.
\]
Then there exists a coupling between $Y^H$ and $\tilde{Y}^K$ such that a.s. for any $t>0$, on the event $\{H_s \leq K_s, s \leq t\}$,
\begin{equation}
  \label{eqn:coupling}
  \forall x \in \R, \, \# \{ j \leq H_t : Y^H_t(j) \geq x \} \leq \# \{ j \leq K_t : \tilde{Y}^K_t(j) \geq x \}.
\end{equation}
\end{lemma}

This lemma, obtained as a straightforward adaptation of \cite[Lemma~1]{BeG10}, expresses that the partial order defined in Section~\ref{sec:generalities} is preserved by the dynamics of the branching random walk with selection, provided that the total number of particles alive remains always smaller for the smaller configuration.

\begin{proof}
The coupling procedure is the following: for any $j \leq H_t$, the $j$th rightmost particle in the processes $Y^H$ and $\tilde{Y}^K$ carry the same exponential clock governing their reproduction. We show that the first change in the composition of the population after time $0$ preserves the property \eqref{eqn:coupling}.
We write
\begin{align*}
  m &= \sup\{j \leq H_0 : Y^H_0(j) > -\infty\} \\ \text{and} \quad n &= \sup\{j \leq K_0 : \tilde{Y}^K_0(j) > -\infty\},
\end{align*}
the number of particles alive at time $0$ in $Y^H$ and $\tilde{Y}^K$ respectively. By assumption, we have $m \leq H_0$, and $m \leq n \leq K_0$ and for all $j \leq m$, $Y^H_0(j) \leq \tilde{Y}^K_0(j)$.

We associate exponential clocks to particles in the processes in such a way that the particles in position $Y^H_0(j)$ and $\tilde{Y}^K_0(j)$ reproduce at the same time, for any $j \leq m$. We denote by $T_b$ (resp. $T_a$) the first time one of these particles reproduces (resp. the first time a particle located at position $\tilde{Y}_0(m+1),\ldots \tilde{Y}_0(n)$ reproduces). We also set
\[
  S = \inf\{ t >0 : H_t \neq H_0 \quad \text{or} \quad K_t \neq K_0 \} \quad \text{and} \quad R = T_a \wedge T_b \wedge S.
\]
We observe that $Y^{H}$ and $\tilde{Y}^K$ are constant processes until time $R$, that $R>0$ a.s. and that $T_a \neq T_b$ a.s.

One of three things can happen at time $R$. Firstly, if $R=T_a$, there is a reproduction event in $\tilde{Y}^K$ but not in $Y^H$. If we rank in a non-increasing order these new particles, they again satisfy the partial ordering. Moreover, as $H_R \leq K_R$, applying the selection procedure to both models preserves this partial ordering, therefore
\[
  \forall x \in \R, \, \# \{ j \leq H_R : Y^H_R(j) \geq x \} \leq \# \{ j \leq K_R : \tilde{Y}^K_R(j) \geq x \}.
\]

If $R = T_b$, then there is a reproduction event in $Y^H$ and $\tilde{Y}^K$. We use the same point process to construct the child of the particle that reproduces in each process. Once again, ranking in a non-increasing order these new particles, then applying the selection, we have
\[
  \forall x \in \R, \,  \# \{ j \leq H_R : Y^H_R(j) \geq x \} \leq \# \{ j \leq K_R : \tilde{Y}^K_R(j) \geq x \}.
\]

Finally, if $R = S \not \in \{ T_a, T_b\}$, the maximal size of at least one of the populations is modified. Even if this implies the death of some particles in $Y^H$ and/or $\tilde{Y}^K$, the property \eqref{eqn:coupling} is preserved at time $R$.

Now fix $t>0$ and assume that $H_s \leq K_s$ for every $0 \leq s \leq t$. As $H$ and $K$ are integer-valued c\`adl\`ag processes, they attain their maxima on compact sets. Therefore, they are both a.s. finite on the interval $[0,t]$, so the number of particles is a.s. finite in both processes $Y^H$ and $\tilde{Y}^K$. Thus there is a.s. a finite sequence of times $(R_k)$ smaller than $t$ such that $Y^H$ or $\tilde{Y}^K$ is modified at each time $R_k$. Using this coupling on each time interval of the form $[R_k,R_{k+1}]$ yields~\eqref{eqn:coupling}.
\end{proof}

Using this lemma, we can prove that the cloud of particles in a $k$-BRW drifts at linear speed $c_k$. Note that by the coupling described in the proof of Lemma~\ref{lem:AP}, this result can be obtained as a consequence of Theorem~\ref{thm:existsSpeed}. However, we believe the following proof to be of independent interest, as it can be generalized to more diverse continuous-time branching random walks with selection.
\begin{lemma}
\label{lem:existSpeed}
For any $k \in \N$, there exists $c_k \in \R$ such that
\[
  \lim_{t \to +\infty} \frac{Y_t^k(1)}{t} = \lim_{t \to +\infty} \frac{Y^k_t(k)}{t} = c_k \quad \text{a.s.}
\]
Moreover, if $Y^k_0(1)=Y^k_0(2)= \ldots = Y^k_0(k) = 0$, we have
\begin{equation}
\label{eqn:speedBRW}
  c_k = \inf_{t > 0} \frac{\E\left[Y_t(1)\right]}{t} = \sup_{t > 0} \frac{\E\left[Y_t(k)\right]}{t}.
\end{equation}
\end{lemma}

The proof of this lemma is adapted from \cite[Proposition 2]{BeG10}.

\begin{proof}
We prove that $(Y^k_t(1))$ is a sub-additive process. We then use Kingman's sub-additive ergodic theorem (see \cite[Theorem 4]{King2} and \cite[Theorem 9.14]{Kal02}), stating that if $(X_{s,t}, 0 \leq s \leq t)$ is a càdlàg family of random variables satisfying
\begin{align}
  &\label{eq:subadd}\forall 0 \leq s \leq t \leq u, \, X_{s,u} \leq X_{s,t} + X_{t,u} \quad \text{a.s}\\
  &\label{eq:eqdist}\forall h \geq 0, \, (X_{s+h,t+h}, 0 \leq s \leq t) \egaldistr (X_{s,t},0 \leq s \leq t)\\
  &\label{eq:ergod}\forall h \geq 0, \, (X_{s+h,t+h}, 0 \leq s \leq t) \text{ is independent of } (X_{s,t}, 0 \leq s \leq t \leq h)\\
  &\label{eq:bound}\exists A > 0, \forall t \geq 0, \, -At \leq \E(X_{0,t}) < \infty \\
  &\label{eq:integ}\E\left( \left| \sup_{0 \leq s \leq t \leq 1} X_{s,t} \right| \right) < \infty,
\end{align}
then $\gamma := \lim_{t \to \infty} \frac{1}{t} \E(X_{0,t})$ exists, is finite and is equal to $\inf_{t \geq 0} \frac{\E(X_{0,t})}{t}$ (by sub-additivity), and
\begin{equation}
  \label{eq:result}
  \lim_{t \to \infty} \frac{1}{t} X_{0,t} = \gamma \quad \text{a.s. and in } L^1.
\end{equation}
We construct on the same probability space a family $(Y^k_{s,t}(j), 0 \leq s \leq t, j \leq k)$, such that for all $s \geq 0$, $(Y^k_{s, s+t}, t \geq 0)$ is a $k$-branching random walk, and $(Y^k_{s,t}(1), 0 \leq s \leq t)$ is sub-additive.

Let $N^1,\ldots,N^k$ be $k$ i.i.d. Poisson processes with unit intensity. For all $s \geq 0$, we set $Y^k_{s,s}(1) = Y^k_{s,s}(2) = \cdots = Y^k_{s,s}(k) = 0$, i.e. particles start at position $0$ at time $s$. Then the process evolves as follows: at each time $t$ such that $N^j_t \neq N^j_{t-}$, the $j$th largest particle alive at time $t-$ in $Y^k_{s,\bullet}$ creates a new child, and the leftmost particle is erased. 

By definition, we observe that for all $s \geq 0$, $(Y^k_{s,s+t}, t \geq 0)$ is a $k$-branching random walk starting with $k$ particles at position $0$ at time $0$. In particular, \eqref{eq:eqdist} is satisfied. Moreover, $Y^k_{s,\bullet}$ is measurable with respect to the Poisson processes $(N^j_{t+s}-N^j_s, t \geq s, j \leq k)$, therefore is independent of $(Y^k_{u,v}, 0 \leq u \leq v \leq s)$. This shows \eqref{eq:ergod}, i.e. that this process is ergodic.

Moreover, one can observe that the construction described here is the same as the one given in the proof of Lemma~\ref{lem:coupbrw}. Therefore, for all $s \leq t$ this process couples the $k$-branching random walks $(Y^k_{s,t+h}, h \geq 0)$ and $(Y^k_{t,t+h}, h \geq 0)$ in such a way that for all $j \leq k$ and $h \geq 0$, one has
\[
  Y^k_{s,t+h}(j) \leq Y^k_{t,t+h}(j) + Y^k_{s,t}(1) \quad \text{a.s.}
\]
Indeed, the $k$-branching random walk $(Y^k_{s,t+h}, h \geq 0)$ is coupled with the $k$-branching random walk $(Y^k_{t,t+h}+Y_{s,t}(1), h \geq 0)$ which starts with $k$ particles at position $Y_{s,t}(1)$. In particular, we have $Y^k_{s,u}(1) \leq Y^k_{s,t}(1)+ Y^k_{t,u}(1)$ a.s., proving \eqref{eq:subadd}.

To prove the last two conditions, we observe that $Y_{s,\bullet}(1)$ increases by at most $1$ at each time one of the Poisson processes jumps. Moreover, $t \mapsto Y_{s,t}$ is non-decreasing, thus, for all $t \geq 0$,
\[
  \E\left( \left| \sup_{0 \leq s \leq t \leq 1} Y_{s,t}(1) \right| \right) \leq k \quad \text{and} \quad 0 \leq \E(Y_{0,t}(1)) < \infty,
\]
proving both \eqref{eq:bound} and \eqref{eq:integ}.

As a result, by Kingman's sub-additive ergodic theorem, setting
\[
   c_k =  \lim_{t \to \infty} \frac{\E\left[Y_t(1)\right]}{t} = \inf_{t > 0} \frac{\E\left[Y_t(1)\right]}{t},
\]
we have $\lim_{t \to \infty} \frac{Y_{0,t}(1)}{t} = c_k$ a.s.

With the same construction, one can observe that $(Y^k_{s,t}(k), 0 \leq s \leq t)$ is a super-additive sequence, satisfying similar integrability assumptions as $Y_{s,t}(1)$. Therefore, setting
\[
  d_k =  \lim_{t \to \infty} \frac{\E\left[Y_{0,t}(k)\right]}{t} = \sup_{t > 0} \frac{\E\left[Y_ {0,t}(1)\right]}{t},
\]
we have $\lim_{t \to \infty} \frac{Y_{0,t}(k)}{t} = d_k$ a.s. As $Y_{s,t}(k) \leq Y_{s,t}(1)$, we have $d_k \leq c_k$. We now prove these two quantities to be equal.

We define a sequence of hitting times $(T_n, n \geq 0)$ by setting $T_0=0$, and $T_{n+1}$ is the first time after time $T_n$ where the last $k$ children are born from the same particle, and that this particle was the rightmost particle before the series of branching events. The probability that the next $k$ branching events are as such is $1/k^k > 0$, therefore $T_n < \infty$ a.s. Moreover, by definition, we have $Y_{0,T_n}(1) = Y_{0,T_n}(k)$ a.s., all particles being at the same position at that time. As a result, we have
\[
  \liminf_{t \to \infty} \frac{Y_{0,t}(1) - Y_{0,t}(k)}{t} = 0 \quad \text{a.s.}
\]
proving that $c_k = d_k$, and that $\frac{Y_{0,t}(1)}{t}$ and $\frac{Y_{0,t}(k)}{t}$ have the same limit.

Finally, we consider a $k$-branching random walk $Y^k$ starting from an arbitrary initial configuration. After a finite amount of time $t$, the process contains $k$ particles. From that point on, the process can be bounded from above and from below by $k$-branching random walks starting with $k$ particles at position $Y^k_t(1)$ and $Y^k_t(k)$ respectively. Therefore, by the previous results, we also obtain
\[
  \lim_{s \to \infty} \frac{Y^k_s(1)}{s} = \lim_{s \to \infty} \frac{Y^k_s(k)}{s} = c_k \quad \text{a.s.}
\]
completing the proof.
\end{proof}

\subsection{End of the proof of Lemma~\ref{lem:vitesseBRWSpecialCase}}
\label{subsec:proofBrw}

In this section, we use Lemma~\ref{lem:coupbrw} to compare the asymptotic behaviour of the continuous-time branching random walk with selection $Y^k$ with a discrete-time branching random walk with selection. This latter model being well-studied, we are able to deduce Lemma~\ref{lem:vitesseBRWSpecialCase} from it. The discrete-time branching random walk with selection of the rightmost $k$ individuals was introduced by Brunet and Derrida in \cite{BD97} to study noisy FKPP equations. In that article, they conjecture that the cloud of particles drifts at speed $v_k$, that satisfies
\begin{equation}
  \label{eqn:bd}
  v_k - v = - \frac{\chi}{(\log k + 3 \log \log k +o(\log \log k))^2}, \text{ as } k \to \infty,
\end{equation}
for some explicit constants $v \in \R$ and $\chi>0$.

We now describe more precisely the discrete-time $k$-branching random walk. Let $k \in \N$ and $\mathcal{M}$ be the law of a point process on $\Z$. The system starts with $k$ particles on $\Z$. At each integer time $n$, every particle dies while giving birth to offspring. The children of a given individual are positioned around their parent according to an i.i.d. point process with law $\calM$. Among all the children of the individuals at generation $n$, the rightmost $k$ ones survive to form the new generation, with ties broken in an uniform fashion.

For every $n \geq 0$, we set $Z^k_n(1) \geq Z^k_n(2) \geq \cdots \geq Z^k_n(k)$ to be the ranked positions of particles alive at generation $n$ in this branching random walk with selection. To avoid the possibility of the process dying out, we assume that every individual always has at least one child, and that the mean number of children is larger than $1$. Note that the formulation of the discrete-time process is slightly different from the one of the continuous-time process, since the parents get immediately killed in the discrete-time setting but not in the continuous-time setting. We could easily adapt the definition of the continuous-time process by saying that when a particle reproduces, it has two children, one at its current location and one immediately to its right, and that the parent gets killed just after reproducing.

We now introduce some notation. Let $M$ be a point process of law $\calM$. We assume that 
\begin{equation}
  \label{eqn:finiteLT}
  \kappa(\theta) := \log \E\left( \sum_{m \in M} e^{\theta m} \right) < \infty \quad \text{ for all $\theta > 0$.}
\end{equation}
Note that the function $\kappa$ is then infinitely differentiable and strictly convex on $(0,\infty)$ as 
\[
\kappa''(\theta) = \E\left( (m - \kappa'(\theta))^2 e^{\theta m - \kappa(\theta)}   \right)>0.
\]
We assume that there exists $\theta^*>0$ such that
\begin{equation}
  \label{eqn:regula}
  \theta^* \kappa'(\theta^*) - \kappa(\theta^*) = 0,
\end{equation}
and we write $v:= \inf_{\theta>0} \frac{\kappa(\theta)}{\theta} = \frac{\kappa(\theta^*)}{\theta^*} = \kappa'(\theta^*)$ and $\sigma^2:= \kappa''(\theta^*)$.

Bérard and Gouéré studied the asymptotic behaviour of the speed of the $k$-branching random walk as $k \to \infty$ under the assumption that the point process $M$ is binary. This result was then extended by Mallein \cite{Mal15b} to more general reproduction laws. We use the following result, which is a special case of \cite[Theorem 1.1]{Mal15b}, applied to the process $(\theta^* Z^k_n - n \kappa(\theta^*), n \geq 0)$.

\begin{theorem}
\label{thm:Mallein}
Let $Z^k$ be a discrete-time branching random walk with selection of the rightmost $k$ particles, whose reproduction law satisfies \eqref{eqn:finiteLT} and \eqref{eqn:regula}. We additionally assume that
\begin{gather}
  \label{eqn:boundary} \E\left( \left|\max_{m \in M} m \right|^2 \right)<+\infty\\
  \label{eqn:integrable} \E\left( \sum_{m \in M} e^{\theta^* m} \left( \log\sum_{m \in M} e^{\theta^* m} \right)^2 \right) < +\infty.
\end{gather}
Then there exists $v_{k}$ such that
\begin{equation}
  \label{eqn:bg}
  \qquad\lim_{n \to +\infty} \frac{Y_n(1)}{n} = \lim_{n \to +\infty} \frac{Y_n(k)}{n} = v_k \quad \text{a.s.}
\end{equation}
and moreover $\lim_{k \to +\infty} (\log k)^2 (v_k - v) = - \frac{\pi^2(\theta^*\sigma)^2}{2}$.
\end{theorem}

It is a straightforward computation to note that $(\theta^* m - \kappa(\theta^*), m \in \mathcal{M})$ is a point process satisfying the assumptions of Theorem~1.1 in \cite{Mal15b}. Precisely, equation (1.3) there is verified as $\mathcal{M}$ contains at least one element a.s. and more than one element on average. Equation (1.4) comes from
\[
  \E\left( \sum_{m \in \mathcal{M}} e^{\theta^* m - \kappa(\theta^*)} \right) = \E\left( \sum_{m \in \mathcal{M}} e^{\theta^* m} \right) e^{- \kappa(\theta^*)}= 1.
\]
Moreover, the random variable $X$ whose law is defined by 
\[\P(X \leq x) = \E\left(\sum_{m \in M} \ind{\theta^* m - \kappa(\theta^*) \leq x} e^{\theta^* m - \kappa(\theta^*)}\right)\]
satisfies $\E(X^2) < \infty$, as by \eqref{eqn:finiteLT},
\begin{equation}
  \label{eqn:finiteVariance}
  \E(X^2) = \E\left(\sum_{m \in M} (\theta^* m - \kappa(\theta^*))^2 e^{\theta^* m - \kappa(\theta^*)}\right)= (\theta^*)^2 \kappa''(\theta^*) < \infty.
\end{equation}
Hence $X$ is in the domain of attraction of the normal distribution, so that $\alpha$ from \cite{Mal15b} is equal to $2$, $Y$ is the normal distribution and $(Y_t,t\geq0)$ is a standard Brownian motion. Thus the function $L^*$ defined in \cite{Mal15b} verifies $\lim_{x \to \infty} L^*(x) = (\theta^* \sigma)^2$ (note that \cite{Mal15b} contains a typo in formula (1.6), where $Y$ should be $X$) and the constant is $C_* = \frac{\pi^2}{2}$. Finally \eqref{eqn:boundary} immediately implies (1.10) in \cite{Mal15b}, and \eqref{eqn:integrable} together with \eqref{eqn:finiteVariance} implies (1.9) there. Hence the conclusions of \cite[Theorem 1.1]{Mal15b} hold.


We combine Lemma~\ref{lem:coupbrw} and Theorem~\ref{thm:Mallein} to bound the asymptotic behaviour of the speed of the continuous-time branching random walk with selection. We start with the upper bound.

\begin{lemma}
\label{lem:ubbrw}
Let $c_k$ be the speed of the continuous-time $k$-branching random walk defined in Lemma~\ref{lem:vitesseBRWSpecialCase}. Then we have
\[
\limsup_{k \to +\infty} (\log k)^2 (c_k - e) \leq - \frac{\pi^2 e}{2}.
\]
\end{lemma}

\begin{proof}
%
Let $(Y_t(u), u \in \calN_t)_{t \geq 0}$ be a continuous-time branching random walk without selection, in which particles create one child to their right at rate $1$ and starting with $k$ individuals at position $0$ at time $0$. Let $k \in \N$, we define the càdlàg adapted process $K_t$ as follows: at each integer time $n \in \N$, we set $K_n=k$ and for all $s \in [n,n+1]$, $K_s$ is the number of descendants at time $s$ of the $k$ individuals alive at time $n$ in $Y^K$. In other words, $Y^K$ is a continuous-time branching random walk with selection in which at each integer time, the rightmost $k$ particles are selected to survive. No additional killing of particles is made.

It appears clear that $K_t \geq k$ a.s. for all $t \geq 0$, therefore by Lemma~\ref{lem:coupbrw}, one can couple the branching random walks with selection $Y^K$ and $Y^k$ in such a way that $Y^K_t(1) \geq Y^k_t(1)$ a.s. As a result, we obtain
\begin{equation}
  \label{1234}
  c_k \leq \liminf_{t \to \infty} \frac{Y^K_t(1)}{t} \quad \text{a.s.}
\end{equation}

We now observe that $Y^K$ can also be constructed as a discrete-time branching random walk with selection. Indeed, each particle alive at time $n \in \N$ gives birth at time $n+1$ to a point process of individuals, distributed as $(\widehat{Y}_1(u), u \in \hat{\calN}_1)$, where $(\widehat{Y}_t(u), u \in \hat{\calN}_t)_{t \geq 0}$ is a continuous-time branching random walk without selection, in which particles create one child to their right at rate $1$ and starting with a single individual at position $0$ at time $0$. Then at time $n+1$, the rightmost $k$ particles are selected. We thus conclude that $(Y^K_n, n \geq 0)$ is a discrete-time $k$-branching random walk.

Let $\theta \in \mathbb{C}$, we compute for all $t \geq 0$, $f_t(\theta) = \E\left( \sum_{u \in \hat{\calN}_t} e^{\theta \widehat{Y}_t(u)} \right)$. As the first branching time of the process is exponentially distributed with parameter $1$, and after this reproduction event one particle at position $0$ and one particle at position $1$ start independent copies of the branching process from their position, we have
\begin{equation}
  \label{eqn:equadiff}
  f_t(\theta) = e^{-t}\left(1 + \int_0^t e^{s} f_s(\theta) (1 + e^\theta) ds\right) \text{ for all $t \geq 0$}.
\end{equation}
In particular, if $\theta \in i\R$, we have
\[
  |f_t(\theta)| \leq \E\left( \sum_{u \in \hat{\calN}_t} |e^{\theta \widehat{Y}_t(u)}| \right) \leq \E\left( \#\hat{\calN}_t \right) = e^t,
\]
where we used that each particle creates one child at rate $1$, so $\#\hat{\calN}_t$ has exponential distribution with parameter $e^{-t}$. Then, by the Cauchy-Lipschitz theorem applied to the linear differential equation \eqref{eqn:equadiff}, we conclude that $f_t= e^{te^\theta}$ for all $\theta \in i\R$. As a result, by analytic continuation, we deduce that $f_t(\theta) = e^{t e^{\theta}}$ for all $\theta \in \mathbb{C}$. This type of computation was first made in \cite{Uch}, we refer to \cite[Lemma 4.5]{BeM19} for a similar computation, as $\widehat{Y}$ can be thought of as a branching Lévy process with finite birth intensity.

As a result, we deduce that we have
\[
  \kappa(\theta) := \log \E\left( \sum_{u \in \hat{\calN}_1} e^{\theta \widehat{Y}_1(u)} \right) = e^\theta.
\]
From this, straightforward computations show that $\theta^* = 1$ and $v = \sigma^2= e$.

Moreover, \eqref{eqn:boundary} is verified: as the trajectories in $\widehat{Y}_t$ are non-decreasing, we have
\[
  \P\left( \max_{u \in \hat{\calN}_1} \widehat{Y}_1(u) < 0 \right) = 0
\]
and by the above computations and the Markov inequality, for all $y \geq 0$
\[
  \P\left( \max_{u \in \hat{\calN}_1} \widehat{Y}_1(u) > y \right) \leq \E\left( \sum_{u \in \hat{\calN}_1} e^{\widehat{Y}_1(u) - y} \right) \leq e^{e} e^{-y},
\]
proving that $\left| \max_{u \in \hat{\calN}_1} \widehat{Y}_1(u)\right|$ has exponential tails, hence a finite second moment.

We now show that \eqref{eqn:integrable} holds as well. Note there exists $C>0$ such that $x (\log x)^2 \leq C x^2+1$ for all $x > 0$, therefore for all $\theta > 0$,
\begin{multline*}
  \E\left( \sum_{u \in \hat{\calN}_1} e^{\theta \widehat{Y}_1(u)} \left( \log\sum_{u \in \hat{\calN}_1} e^{\theta \widehat{Y}_1(u)} \right)^2 \right) \\
  \leq C  \E\left( \left(\sum_{u \in \hat{\calN}_1} e^{\theta \widehat{Y}_1(u)} \right)^2 \right)+1 =: C g_t(\theta)+1.
\end{multline*}
With the same reasoning as for the computation of $f_t$, for all $\theta \in \mathbb{C}$ and $t \geq 0$, we have
\[
  g_t(\theta) = e^{-t} \left( 1 + \int_0^t e^s \left(g_s(\theta)(1 + e^{2\theta}) + 2 e^\theta f_s(\theta)^2\right) ds\right).
\]
This equation can be solved for $\theta \in i\R$ as $g_t(\theta) = \frac{e^\theta}{e^\theta - 2} e^{e^{2\theta} t} - \frac{2}{e^\theta - 2} e^{2 e^\theta t}$. Then, by analytic continuation, this formula also holds for all $\theta \in \R\backslash \{\log 2\}$ and is analytically continued by $g_t(\log 2) = (4 t + 1)e^{4t}$. So $g_1(\theta) < \infty$, completing the proof of \eqref{eqn:integrable}.

As a result, we can apply Theorem~\ref{thm:Mallein} and we obtain
\[
  \lim_{n \to \infty} \frac{Y^K_n(1)}{n} = v_k \quad \text{a.s.}
\]
with $v_k - e \sim - \frac{\pi^2e}{2 (\log k)^2}$ as $k \to \infty$. By \eqref{1234}, we have $v_k \geq c_k$, which completes the proof.
\end{proof}

The lower bound is obtained in a similar yet more involved fashion. The proof of this lemma is adapted from \cite[Section 4.4]{Mal15b}.
\begin{lemma}
\label{lem:lbbrw}
Let $c_k$ be the speed of the continuous-time $k$-branching random walk defined in Lemma~\ref{lem:vitesseBRWSpecialCase}. Then we have
\[
\liminf_{k \to +\infty} (\log k)^2 (c_k-e) \geq - \frac{\pi^2 e}{2}.
\]
\end{lemma}

\begin{proof}
In this proof, we construct a continuous-time particle process $\tilde{Y}$ that evolves similarly to a discrete-time branching random walk with selection, with frequent renovation events, and that can be coupled with the $k$-BRW $Y^k$ in such a way that its maximal displacement is smaller than the maximal displacement of $Y^k$. Given $a \in (0,1)$,  the process typically evolves like a discrete-time $\ceil{a k}$-branching random walk, and on a time scale of order $(\log k)^3$, every particle in the process is killed and replaced by $\ceil{a k}$ particles starting from the smallest position in $\tilde{Y}$ at that time.

Let $a \in (0,1)$, we set $p = \ceil{a k}$. Let $(Y_t(u), u \in \calN_t)$ (resp. $(\widehat{Y}_t(u), u \in \hat{\calN}_t)$) be a continuous-time branching random walk, starting from $k$ particles (resp. a single particle) located at position 0. As $\lim_{t \to 0} \E(\#\hat{\calN}_t) + t =1 < \frac{1}{a}$, there exists $0 < \beta < 1$ such that $\E(\#\calN_\beta) < \frac{1}{a} - \beta$. We introduce the point process $M^\beta := (\hat{Y}_\beta(u), u \in \hat{\calN}_\beta)$.

Let $(Z^p_n(j), j \leq p)_n$ be a discrete-time branching random walk with selection of the rightmost $p$ particles, with reproduction law $M^\beta$, starting with $p$ particles located at position 0 at time $0$. With the same computations as in the proof of Lemma~\ref{lem:ubbrw}, we obtain $\kappa(\theta) = \beta e^\theta$ for every $\theta>0$ and
\[
  v = \beta e, \quad \theta^* = 1 \quad \text{and} \quad \sigma^2 = \beta e.
\]
Let $\eta>0$ and $\chi_p = \beta \frac{\pi^2 e}{2 (\log p)^2}$. Applying \cite[Lemma 4.6]{Mal15b}, there exists $\gamma>0$ such that for all $p \geq 1$ large enough, we have
\begin{equation}
  \label{eqn:badEvent2}
  \P\left( \forall n \leq (\log p)^3, Z^p_n(p) - n\beta e \leq -n (1+\eta) \chi_p \right) \leq \exp(-p^\gamma).
\end{equation}
To translate \cite[Lemma 4.6]{Mal15b} into \eqref{eqn:badEvent2}, one should specialize the quantities defined in \cite[Lemma 4.6]{Mal15b} similarly to what is done below Theorem~\ref{thm:Mallein}, setting $N=p$, $\calL$ the law of $\theta^* M^\beta - \kappa(\theta^*)$, $\lambda=1$, $\epsilon=\eta$, $\delta=\gamma$, $\alpha=2$, and $\nu_N=\chi_p/\beta e$, recalling that $L^*(x) \to \beta e$ as $x \to \infty$. Lemma 4.6 in \cite{Mal15b} is associated to the branching random walk $X^N_n(N)=Z^p_n(p) - n\beta e$.

Note that $\nu_N$ is defined in~\cite{Mal15b} by formula (4.1) and is, up to a sign, nothing but the right-hand side of the formula in \cite[Theorem 1.1]{Mal15b}. The assumptions required for \cite[Lemma 4.6]{Mal15b} are the same as those of \cite[Theorem 1.1]{Mal15b} and one checks that they are satisfied exactly as in the proof of Lemma~\ref{lem:ubbrw} since $M^\beta$ satisfies the assumptions appearing in and before the statement of Theorem~\ref{thm:Mallein}. The estimate \eqref{eqn:badEvent2} will be used later in the proof.

We observe that, as in the proof of Lemma~\ref{lem:ubbrw}, $(Z^p_n(j), j \leq p)_n$ can also be constructed as the values taken at discrete times by a continuous-time $P$-BRW, for a given adapted integer-valued càdlàg process $P$. More precisely, we introduce $(P_t)$ defined by $P_{n\beta} = p$ for any $n \geq 0$ and for any $t \in (n\beta,(n+1)\beta)$, $P_t$ is the number of descendants at time $t$ of particles alive at time $n \beta$. We have
\[
  \left(\left( Y^P_{n\beta}(j), j \leq p\right), n \geq 0 \right) \egaldistr \left(\left( Z^p_n(j), j \leq p\right), n \geq 0 \right).
\]
For any $n \in \N$, we introduce the event $\calA^k_n = \left\{ \max_{t \leq \beta n} P_t \leq k \right\}$. By Lemma~\ref{lem:coupbrw}, we can couple $Y^k$ and $Y^P$ in such a way that
\[
  \forall x \in \R, \, \# \{ j \leq P : Y^P_{n\beta}(j) \geq x \} \leq \# \{ j \leq k : Y^k_{n\beta}(j) \geq x \} \text{ a.s. on }\calA^k_n.
\]

We bound from below the probability for $\calA^k_n$ to occur. As every particle makes at least one child, the process $P$ is non-decreasing on each interval $(n\beta,(n+1)\beta)$. Moreover, observe that $P_{\beta-}$ is the sum of $p$ i.i.d. random variables, each with the same distribution as the number $\#\hat{\calN}_\beta$ of particles alive at time $\beta$ in the continuous-time branching random walk $(\widehat{Y}_t)$. As $\#\hat{\calN}_\beta$ is a geometric random variable with parameter $e^{-\beta}$ (see \cite[p. 109]{AtN}), by construction of $\beta$ this random variable has mean smaller than $1/a$ and has some exponential moments.
By Cramér's large deviations theorem, there exists $\rho<1$ independent of k such that $\P(P_{\beta-} > k) < \rho^k$.
Therefore
\begin{equation}
  \label{eqn:badEvent1}
   \P( {\calA^k_n}^c ) \leq \sum_{j=0}^{n-1} \P( P_{j\beta-} > k) \leq n \rho^k.
\end{equation}

We now construct a continuous-time particle process $\tilde{Y}$, based on the $P$-BRW $Y^P$ that bounds from below the $k$-BRW $Y^k$. Let $n_p = (\log p)^3$, we set $T_0 = 0$. For any $t \geq 0$, we write $\tilde{Y}_t(1), \ldots$ the positions of the particles in $\tilde{Y}$ at time $t$, ranked in a non-increasing order, and $\tilde{m}_t$ the position of the leftmost particle at time $t$. The particle process $\tilde{Y}$ starts at time $0$ with $p$ particles at position $0$ and behaves like $Y^P$ until the waiting time
\begin{align*}
  &T_1 = \min( \beta n_p, T_1^{(1)}, T_1^{(2)} ), \quad \text{where } T_1^{(1)} = \inf\left\{ t \geq 0 : P_t \geq k \right\}\\
 \text{and } &T_1^{(2)} = \beta \inf\left\{ n \in \N : \tilde{m}_{n\beta} > n(\beta e - \chi_p(1 + \eta)) \right\}.
\end{align*}

At time $T_1$, every particle in $\tilde{Y}$ is killed and $p$ new particles are positioned at $\tilde{m}_{T_1-}$ if $P_{T_1} > k$ (i.e. $T_1 = T_1^{(1)}$) and at position $\tilde{m}_{T_1}$ otherwise. By the above coupling between $Y^k$ and $Y^P$, in both cases there are at time $T_1$ at least $p$ particles in $Y^k$ to the right of the $p$ newborn particles in $\tilde{Y}$.

Let $\ell \in \N$, we assume the process $\tilde{Y}$ has been constructed until time $T_\ell$. After this time, it evolves as a $P$-BRW until time
\begin{align*}
  &T_{\ell+1} = \min( T_\ell + \beta n_p, T_{\ell+1}^{(1)}, T_{\ell+1}^{(2)} ), \quad \text{where } T_{\ell+1}^{(1)} = \inf\left\{ t \geq T_\ell : P_t \geq k \right\}\\
 \text{and } &T_{\ell+1}^{(2)} = T_\ell + \beta\inf\left\{ n \in \N : \tilde{m}_{T_\ell+\beta n} - \tilde{m}_{T_\ell} > n(\beta e - \chi_p(1 + \eta)) \right\}.
\end{align*}
At time $T_{\ell+1}$, every particle in $\tilde{Y}$ is killed and $p$ new particles are positioned at $\tilde{m}_{T_{\ell+1}-}$ if $P_{T_{\ell+1}} > k$ (i.e. $T_{\ell+1} = T_{\ell+1}^{(1)}$) and at position $\tilde{m}_{T_{\ell+1}}$ otherwise.

By induction and the construction of the process, we observe that $\tilde{Y}$ can be coupled with $Y^k$ in such a way that for any $t \geq 0$, we have
\[
  \forall x \in \R, \, \# \{ j \leq P_t : \tilde{Y}_t(j) \geq x \} \leq \# \{ j \leq k : Y^k_t(j) \geq x \}.
\]
As $\tilde{Y}_t(P_t) \leq Y^k_t(1)$ for any $t>0$, we obtain $\displaystyle\limsup_{t \to +\infty} t^{-1}(\tilde{m}_t-te) \leq c_k-e$, using Lemma~\ref{lem:existSpeed}.

Moreover, observe that $(T_{\ell+1}-T_\ell)_\ell$ and $(\tilde{m}_{T_{\ell+1}}-\tilde{m}_{T_\ell})_\ell$ are i.i.d. sequences of random variables. Consequently, by the law of large numbers we have
\[
  \lim_{\ell \to \infty} \frac{T_\ell}{\ell} = \E(T_1) \quad \text{ and } \quad \lim_{\ell \to \infty} \frac{\tilde{m}_{T_{\ell}}}{\ell} = \E(\tilde{m}_{T_1}) \quad \text{a.s.},
\]
where $\E(T_1) \leq \beta n_p < \infty$ by definition, and $ \tilde{m}_{T_1} \geq 0$ a.s. Therefore, we have
\[\lim_{\ell \to \infty} \frac{\tilde{m}_{T_\ell} - T_\ell e}{T_\ell} = \frac{\E(\tilde{m}_{T_1}-T_1 e)}{\E(T_1)} \leq c_k - e.\]

As a result, to conclude the proof it is enough to bound $\E(\tilde{m}_{T_1}-T_1 e)$ from below. We introduce the event $G = \{T_1 = T^{(2)}_1 <T^{(1)}_1\}$. By definition of $T_1$,
\begin{equation}
  \label{eqn:presquelafin}
  \E(\tilde{m}_{T_1}-T_1 e)\geq
  \E\left( - \tfrac{T_1}{\beta} \chi_p(1 + \eta) \indset{G} \right) +\E\left( (\tilde{m}_{T_1-}-T_1 e) \indset{G^c} \right).
\end{equation}
Observe that until time $T_1-$, $\tilde{Y}$ behaves as a $P$-BRW. In particular, the trajectories of particles are non-decreasing, therefore
\begin{align*}
  \E\left( (\tilde{m}_{T_1-}-T_1 e) \indset{G^c} \right) &\geq - \E(T_1 e \indset{G^c})\\
   &\geq -e \beta n_p \left( e^{-p^\gamma} + n_p\rho^k \right) = o\left(\left(\log k\right)^{-4}\right),
\end{align*}
by \eqref{eqn:badEvent2} and \eqref{eqn:badEvent1}.

As a consequence, \eqref{eqn:presquelafin} yields
\[
  \liminf_{k \to +\infty} (\log k)^2 (c_k - e) \geq \liminf_{k \to +\infty} -(\log k)^2 \frac{\chi_p}{\beta}(1 + \eta) \frac{\E(T_1 \indset{G})}{\E(T_1)}
  \geq -\frac{\pi^2 e}{2}(1+\eta).
\]
We conclude the proof by letting $\eta \to 0$.
\end{proof}

The last statement of Lemma~\ref{lem:vitesseBRWSpecialCase} is a combination of Lemmas~\ref{lem:ubbrw} and~\ref{lem:lbbrw}.

\paragraph*{Acknowledgements}

We would like to thank Ksenia Chernysh, Sergey Foss, Patricia Hersh, Richard Kenyon, Takis Konstantopoulos and Jean-Fran\c{c}ois Rupprecht for fruitful discussions and Persi Diaconis for pointing out the reference~\cite{AP}.

\label{Bibliography}
\bibliographystyle{plain}
\bibliography{bibliographie}

\Addresses

\end{document}